\DeclareMathOperator\id{id}
\newcommand{\xMapsto}[2][]{\ext@arrow 0599{\Mapstofill@}{#1}{#2}}
\def\Mapstofill@{\arrowfill@{\Mapstochar\Relbar}\Relbar\Rightarrow}
\theoremstyle{plain}%
 \newtheorem{theorem}{Theorem}
 \newtheorem{lemma}{Lemma}%
\theoremstyle{remark}
\newtheorem{remark}{Remark}
\theoremstyle{definition}
\newtheorem{definition}{Definition}
\newtheorem{example}{Example}
\definecolor{green}{rgb}{0.1,0.1,0.1}
\begin{document}

\begin{center}
{\Large A Combinatorial Hopf Algebra on partition diagrams}
\end{center}

\begin{center}
{\textsc{John M. Campbell} }
\end{center}

\begin{abstract}
 We introduce a Combinatorial Hopf Algebra (CHA) with bases indexed by the partition diagrams indexing the bases for partition algebras. By analogy with 
 the operation $H_{\alpha} H_{\beta} = H_{\alpha \cdot \beta}$ for the complete homogeneous basis of the CHA $ \textsf{NSym}$ given by concatenating 
 compositions $\alpha$ and $\beta$, we mimic this multiplication rule by setting $\textsf{H}_{\pi} \textsf{H}_{\rho} = \textsf{H}_{\pi \otimes \rho}$ for 
 partition diagrams $\pi$ and $\rho$ and for the horizontal concatenation $\pi \otimes \rho$ of $ \pi$ and $\rho$. This gives rise to a free, graded algebra $ 
 \textsf{ParSym}$, which we endow with a CHA structure by lifting the CHA structure of $ \textsf{NSym}$ using an analogue, for partition diagrams, of 
 near-concatenations of integer compositions. Unlike the Hopf algebra $\textsf{NCSym}$ on set partitions, the new CHA $\textsf{ParSym}$ projects onto $ 
 \textsf{NSym}$ in natural way via a ``forgetful'' morphism analogous to the projection of $\textsf{NSym}$ onto its commutative counterpart 
 $\textsf{Sym}$. We prove, using the Boolean transform for the sequence $(B_{2n} : n 
 \in \mathbb{N})$ of even-indexed Bell numbers, an analogue of Comtet's generating function for the sequence counting irreducible permutations, yielding a 
 formula for the number of generators in each degree for $\textsf{ParSym}$, and we prove, using a sign-reversing involution, an evaluation for the 
 antipode for $\textsf{ParSym}$. An advantage of our CHA being defined on partition diagrams in full generality, 
 in contrast to a previously defined Hopf algebra on uniform block permutations, 
 is given by how the coproduct operation we have defined for $\textsf{ParSym}$ 
 is such that the usual diagram subalgebras of partition algebras naturally give rise to 
 \emph{Hopf} subalgebras of $\textsf{ParSym}$ by restricting the indexing sets of the graded components 
 to diagrams of a specified form, as in with perfect matchings, partial permutations, planar diagrams, etc. 
\end{abstract}

\noindent {\footnotesize \emph{Keywords:} Combinatorial Hopf Algebra; partition diagram; set partition; noncommutative symmetric function; 
 antipode; Boolean transform; free algebra; diagram algebra} 

\noindent {\footnotesize \emph{MSC:} 16T30, 05E05, 16T05}

\section{Introduction}
 The Hopf algebra $\textsf{NSym}$ of noncommutative symmetric functions introduced in \cite{GelfandKrobLascouxLeclercRetakhThibon1995} 
 continues to be applied 
 in important ways within combinatorics and many other areas. The underlying algebra of the bialgebra $\textsf{NSym}$ is such that 
\begin{equation}\label{displayNSym}
 \textsf{NSym} = \mathbbm{k}\langle H_{1}, H_{2}, \ldots \rangle, 
\end{equation}
 providing a noncommutative companion to 
\begin{equation}\label{displaySym}
 \textsf{Sym} = \mathbbm{k}[h_{1}, h_{2}, \ldots]. 
\end{equation}
 The free algebra structure indicated in \eqref{displayNSym}
 naturally gives rise to the complete homogeneous basis 
\begin{equation}\label{completefamily}
 \{ H_{\alpha} : \alpha \in \mathcal{C} \}
\end{equation}
 of $\textsf{NSym}$, subject to the multiplication rule 
\begin{equation}\label{homogeneousmult}
 H_{\alpha} H_{\beta} = H_{\alpha \cdot \beta}, 
\end{equation}
 letting $\mathcal{C}$ denote the set of all integer compositions, and letting $\alpha \cdot \beta$ $= $ $ (\alpha_{1}$, $ \alpha_{2}$, $ \ldots$, $ 
 \alpha_{\ell(\alpha)}$, $ \beta_{1}$, $ \beta_{2}$, $ \ldots$, $ \beta_{\ell(\beta)})$ denote the concatenation of $\alpha$ $= $ $ (\alpha_{1}$, 
 $ \alpha_{2}$, $ \ldots$, $ \alpha_{\ell(\alpha)})$ and $\beta$ $ = $ $ (\beta_{1}$, $ \beta_{2}$, $ \ldots$, $ \beta_{\ell(\beta)})$. Given a class of 
 combinatorial objects, if we construct an algebra with bases indexed by such objects according to a multiplication rule given by a concatenation or 
 concatenation-type operation, and without further conditions being imposed on the multiplication, this gives rise to a graded algebra structure closely 
 related to \eqref{displayNSym}, and motivates the construction of new Hopf algebras satisfying the axioms for Combinatorial Hopf Algebras (CHAs), as 
 introduced by Aguiar, Bergeron, and Sottile \cite{AguiarBergeronSottile2006}. We introduce a CHA, which we denote as $\textsf{ParSym}$, with 
 bases indexed by the combinatorial objects indexing the bases of partition algebras and with a multiplication rule defined by analogy with 
 \eqref{homogeneousmult}. In this regard, the horizontal concatenation operation $\otimes$ on partition diagrams is a natural operation to use to form a 
 weight function for a graded algebra with bases indexed by partition diagrams. Our use of the $\otimes$ operation 
 on the basis elements of partition algebras is also inspired by the uses of this operation in 
 the context of the character theory of partition algebras \cite{Halverson2001}, 
 and by how product operations for CHAs on graphs and on many other combinational 
 objects are often defined via concatenation or closely related operations, 
 as in with the disjoint union of graphs. 

 Hopf algebras and partition algebras play important roles in many different areas within algebraic combinatorics and physics. However, it appears that 
 partition algebras have not previously been endowed with any Hopf algebra or bialgebra structures. The foregoing considerations motivate the problem of 
 introducing a Hopf algebra structure on the combinatorial objects indexing the bases of partition algebras. Aguiar and Orellana \cite{AguiarOrellana2008} 
 have introduced a Hopf algebra on uniform block permutations, which are special cases of partition diagrams, and this motivates the construction of a Hopf 
 algebra on partition diagrams in full generality. The Aguiar--Orellana Hopf algebra 
 being free also motivates our constructing, as below, a free Hopf algebra on partition diagrams in full generality. 
 It seems that past references related to the Aguiar--Orellana algebra \cite{AguiarOrellana2008}, including 
 \cite{CheballahGiraudoMaurice2015,DaughertyHerbrich2014,Fisher2010,Foissy2007,Foissy2012,Maltcev2007,Maurice2013}, have not concerned 
 Hopf algebras on partition diagrams in full generality. 

 There is a great amount of literature on Hopf algebras on families of graphs; see 
 \cite{AguiarOrellana2008,ArcisMarquez2022,BergeronGonzalezdLeonLiPangVargas2023,DuchampLuqueNovelliTolluToumazet2011,Foissy2002,FoissyUnterberger2013,GrossmanLarson1989,Kreimer2010,Kreimer1998,LodayRonco1998,Schmitt1993,WangXuGao2020,ZhangXuGuo2022} 
 and many related references on CHAs on graphs and graph-like objects. This past literature motivates the problem of constructing a CHA on partition 
 diagrams, which form a naturally occurring family of simple graphs that are often used within statistical mechanics and within representation theory. 
 The coproduct operation for partition diagrams that we introduce depends on 
 the labeling system for partition diagrams together with a binary operation 
 $\bullet$ that provides an analogue of near-concatenation 
 for partition diagrams, as opposed to integer compositions. 
 This is in contrast to comultiplication operations for 
 previously studied Hopf algebras on graphs, 
 such as the Hopf algebra 
 $\mathcal{G}$ on finite graphs introduced in 
 \cite{Schmitt1994} and later studied in references such as \cite{HumpertMartin2012}. 
 The Hopf algebra of diagrams due to Duchamp et al.\ \cite{DuchampLuqueNovelliTolluToumazet2011} is not related to diagram algebras or partition algebras, 
 but is based on a family of combinatorial objects that are defined and denoted in something of a similar way relative to partition diagrams, which further 
 motivates the interest in constructing a Hopf algebra with bases indexed by partition diagrams. See also the work on Hopf algebras on dissection 
 diagrams due to Dupont \cite{Dupont2014} and Mammez \cite{Mammez2020}. 

 In Section \ref{subsectionNCSym} below, we consider how our new CHA $\textsf{ParSym}$ relates to the Hopf algebra $\textsf{NCSym}$ 
 \cite{BergeronReutenauerRosasZabrocki2008}, the bases of which are indexed by set partitions in a similar way, relative to the bases of 
 $\textsf{ParSym}$, and Section \ref{subsectionNCSym} also highlights some of the main points of interest concerning the new Hopf algebra 
 $\textsf{ParSym}$, in relation to $\textsf{NCSym}$ and otherwise. Beforehand, we are to briefly review preliminaries on partition diagrams, 
 as in Section \ref{subsectiondiagrams} below. The main, nonintroductory sections of our article are summarized as below. 

 \ 

 - In Section \ref{sectionnewHopf}, we define $\textsf{ParSym}$ as a graded algebra 
 and determine an irreducible generating set for $\textsf{ParSym}$, 
 and we determine the number of generators in each degree for $\textsf{ParSym}$. 

 \ 

 - In Section \ref{sectionCHA}, we introduce a CHA structure on $\textsf{ParSym}$, 
 and we prove a graph-theoretic property concerning an analogue of near-concatenation, 
 to construct a CHA projection morphism from $\textsf{ParSym}$ to $\textsf{NSym}$, 
 analogous to the projection of $\textsf{NSym}$ onto $\textsf{Sym}$. 
 We introduce an analogue of the $E$-generators of $\textsf{NSym}$
 to define an antipode antihomomorphism on $\textsf{ParSym}$, 
 and we prove that the required antipode axioms are satisfied, using a sign-reversing involution. 
 Our CHA morphism from $\textsf{ParSym}$ to $\textsf{NSym}$ allows us to evaluate the unique 
 CHA morphism from $\textsf{ParSym}$ to $\textsf{QSym}$. 

 \ 

 - In Section \ref{sectionDiagramHopf}, we show how each of the families of partition diagrams 
 associated with what may be regarded as the main or most important subalgebras of $\mathbb{C}A_k(n)$ 
 naturally gives rise to a Hopf subalgebra of $\textsf{ParSym}$. 

 \ 

 - In Section \ref{sectionConclusion}, we conclude with a number of further research areas to explore 
 related to the CHA $\textsf{ParSym}$ introduced in this article. 
 
\subsection{Partition diagrams}\label{subsectiondiagrams}
 For a set partition $\pi$ of $\{ 1, 2, \ldots, k, 1', 2', \ldots, k' \}$, we denote $\pi$ as a graph $G$ by arranging the elements of $\{ 1, 2, \ldots, k \}$ into 
 a top row and arranging the members of $\{ 1', 2', \ldots, k' \}$ into a bottom row, with members in $\pi$ forming the connected components of $G$. We 
 consider any two graphs $G$ and $G'$ on $\{ 1$, $2$, $ \ldots$, $k$, $1'$, $ 2'$, $\ldots$, $ k' \}$ to be equivalent if the components of $G$ and $G'$ are 
 the same, and we may identify $\pi$ with any graph $G$ equivalent to $\pi$. We may refer to $G$ or its equivalence class as a \emph{partition diagram}, 
 and it may equivalently be denoted as a set partition. 

\begin{example}
 The partition diagram corresponding to the set partition $\{\{5'$, $5\}$, $ \{4'\}$, $ \{3'$, $1$, $2$, $3$, $4\}$, $ \{2'\}$, $ \{1'\}\}$ may be 
 illustrated as below. 
\begin{equation}\label{diagramgraphex}
 \begin{tikzpicture}[scale = 0.5,thick, baseline={(0,-1ex/2)}] 
\tikzstyle{vertex} = [shape = circle, minimum size = 7pt, inner sep = 1pt] 
\node[vertex] (G--5) at (6.0, -1) [shape = circle, draw] {}; 
\node[vertex] (G-5) at (6.0, 1) [shape = circle, draw] {}; 
\node[vertex] (G--4) at (4.5, -1) [shape = circle, draw] {}; 
\node[vertex] (G--3) at (3.0, -1) [shape = circle, draw] {}; 
\node[vertex] (G-1) at (0.0, 1) [shape = circle, draw] {}; 
\node[vertex] (G-2) at (1.5, 1) [shape = circle, draw] {}; 
\node[vertex] (G-3) at (3.0, 1) [shape = circle, draw] {}; 
\node[vertex] (G-4) at (4.5, 1) [shape = circle, draw] {}; 
\node[vertex] (G--2) at (1.5, -1) [shape = circle, draw] {}; 
\node[vertex] (G--1) at (0.0, -1) [shape = circle, draw] {}; 
\draw[] (G-5) .. controls +(0, -1) and +(0, 1) .. (G--5); 
\draw[] (G-1) .. controls +(0.5, -0.5) and +(-0.5, -0.5) .. (G-2); 
\draw[] (G-2) .. controls +(0.5, -0.5) and +(-0.5, -0.5) .. (G-3); 
\draw[] (G-3) .. controls +(0.5, -0.5) and +(-0.5, -0.5) .. (G-4); 
\draw[] (G-4) .. controls +(-0.75, -1) and +(0.75, 1) .. (G--3); 
\draw[] (G--3) .. controls +(-1, 1) and +(1, -1) .. (G-1); 
\end{tikzpicture} 
\end{equation}
\end{example}

 The set of all partition diagrams of order $k$ may be denoted as $A_{k}$, and this is typically endowed with a monoid structure. The partition algebra $ 
 \mathbb{C}A_k(n)$ is closely related to the monoid $A_{k}$, and we refer to \cite{HalversonRam2005} for background material on and definitions 
 concerning $\mathbb{C}A_k(n)$. The algebra $\mathbb{C}A_k(n)$ is 
 spanned by a 
 {diagram basis} given by the underlying set of $A_{k}$, so that the dimension of $\mathbb{C}A_{k}(n)$ is equal to $B_{2k}$, i.e., the Bell number 
 indexed by $2k$, where the Bell number $B_{m}$ is equal to the number of set partitions of $\{ 1, 2, \ldots, m \}$. 

 The multiplicative operations for both $A_{k}$ and 
 $\mathbb{C}A_k(n)$ are defined using the \emph{vertical} concatenation $d_{1} \ast d_{2}$ of partition diagrams, 
 as opposed to the horizontal concatenation operation $\otimes$ referred to as above. 
 The product $d_{1} \ast d_{2}$ 
 is obtained by placing $d_{1}$ on top of $d_{2}$ in such a way so that the bottom nodes of $d_{1}$ overlap with the top nodes of 
 $d_{2}$. The underlying multiplicative operation for $\mathbb{C}A_k(n)$
 may be defined by 
 removing the middle row of 
 $d_{1} \ast d_{2}$, in a way that preserves the relation of topmost nodes
 in $d_{1} \ast d_{2}$ being in the same component as bottommost nodes, 
 so that $n^{\ell}$ times the resultant diagram 
 is equal to the product $d_{1} d_{2}$ in $\mathbb{C}A_k(n)$, where $\ell$ denotes the number of components removed from the middle row 
 of $d_{1} \ast d_{2}$, and where $n$ denotes a complex parameter. 
 To form a graded algebra or graded ring structure based on a multiplicative 
 operation on partition diagrams, it would be appropriate to use the {horizontal} concatenation of 
 partition diagrams. Following Halverson's work in \cite{Halverson2001}, by letting $d_{1}$ and $d_{2}$ denote partition diagrams that are, respectively, 
 of orders $k_{1}$ and $k_{2}$, we may let $d_{1} \otimes d_{2}$ denote the partition diagram of order $k_{1} + k_{2}$ obtained by placing $d_{2}$ 
 to the right of $d_{1}$. 

 The partition algebra $\mathbb{C}A_k(n)$ naturally arises in the field of statistical mechanics via the Schur--Weyl duality associated with the 
 centralizer algebra 
\begin{equation}\label{maincentralizer}
 \text{End}_{S_{n}}\left( V^{\otimes k} \right) \cong \mathbb{C}A_k(n), 
\end{equation}
 for an $n$-dimensional vector space $V$, and where $S_{n}$ acts on $ V^{\otimes k} $ diagonally, as a subgroup of $\text{GL}_{n}(\mathbb{C})$. 
 The interdisciplinary interest surrounding the construction of a Hopf algebra on the combinatorial objects indexing the bases of centralizer algebras of the 
 form $\text{End}_{S_{n}}\left( V^{\otimes k} \right) $ is motivated by past research on Schur--Weyl duality and Hopf algebras, as in the work of 
 Benkart and Witherspoon \cite{BenkartWitherspoon2004} and the work of Novelli, Patras, and Thibon \cite{NovelliPatrasThibon2013}. 

\subsection{Relationship with $\textsf{NCSym}$}\label{subsectionNCSym}
 It appears that no coproduct operation has previously been defined on partition algebras or partition diagrams. However, there have been a number of 
 previously introduced Hopf algebras with bases indexed by set partitions or closely related combinatorial objects, which further motivates the interest in the 
 new Hopf algebra $\textsf{ParSym}$. What is typically meant by the Hopf algebra on set partitions refers to $\textsf{NCSym}$ (cf. \cite{Rey2007}), the 
 Hopf structure for which was introduced by Bergeron et al.\ in \cite{BergeronReutenauerRosasZabrocki2008}. One might think that the Hopf algebras $ 
 \textsf{NCSym}$ and $\textsf{ParSym}$ would be closely related, since both of these Hopf algebras have bases indexed by families of set partitions 
 and since both of these Hopf algebras contain $\textsf{NSym}$ as a Hopf subalgebra. However, our methods and results and constructions differ greatly 
 compared to \cite{BergeronReutenauerRosasZabrocki2008}, and this is summarized below. 

 Our CHA $\textsf{ParSym}$ naturally projects, via a CHA morphism, onto $\textsf{NSym}$, but $\textsf{NCSym}$ does not seem to project onto $ 
 \textsf{NSym}$, at least in any natural or useful or combinatorially significant way, 
 with reference 
 to \cite{BergeronReutenauerRosasZabrocki2008,BessenrodtLuotovanWilligenburg2011}, and with a particular regard toward 
 Theorem 4.9 in \cite{BergeronReutenauerRosasZabrocki2008}. The coproduct operation on $\textsf{NCSym}$ introduced in 
 \cite{BergeronReutenauerRosasZabrocki2008} is completely different from the coproduct for $\textsf{ParSym}$ that we introduce, which, arguably, gives us 
 a more natural lifting of $\textsf{NSym}$ in terms of how $\textsf{ParSym} $ projects onto $\textsf{NSym}$, compared to the comultiplication operation 
\begin{equation}\label{NCSymDelta}
 \Delta\left( \text{{\bf m}}_{A} \right) 
 = \sum_{S \subseteq [\ell(A)]} \text{{\bf m}}_{A_{S}} \otimes \text{{\bf m}}_{A_{S^{C}}}, 
\end{equation}
 referring to \cite{BergeronReutenauerRosasZabrocki2008} 
 for details as to the notation in \eqref{NCSymDelta}. 

 The bases of $\textsf{ParSym}$ are indexed by families of simple graphs denoting partition diagrams and indexing the bases for partition algebras, and 
 our work is directly motivated by combinatorial and representation-theoretic properties associated with partition diagrams and partition algebras, whereas 
 partition algebras, partition diagrams, graphs, etc., are not considered in \cite{BergeronReutenauerRosasZabrocki2008}. 
 Our explicit evaluation of the antipode $S_{\textsf{ParSym}}\colon \textsf{ParSym} \to \textsf{ParSym}$ 
 that we introduce requires the construction of a new, elementary-like basis, but the construction of such a basis is not 
 required in \cite{BergeronReutenauerRosasZabrocki2008}, and the antipode for $\textsf{NCSym}$ is not given explicitly in 
 \cite{BergeronReutenauerRosasZabrocki2008}. Finally, our lifting 
 properties associated with integer compositions from $\textsf{NSym}$ so as to be applicable in $\textsf{ParSym}$ requires the concept of a 
 $\bullet$-decomposition given in this article, 
 using an analogue, for partition diagrams, of near-concatenation, 
 but this kind of approach is not involved in \cite{BergeronReutenauerRosasZabrocki2008}. 

 The Aguiar--Orellana Hopf algebra on uniform block permutations \cite{AguiarOrellana2008} contains $\textsf{NCSym}$
 as a Hopf algebra, so much of the above commentary contrasting $\textsf{ParSym}$ and $\textsf{NCSym}$
 similarly applies with regard to the relationship between $\textsf{ParSym}$ and the Hopf algebra from 
 \cite{AguiarOrellana2008}. 
 We encourage the interested reader to consider past references that have been influenced by or otherwise reference 
 \cite{BergeronReutenauerRosasZabrocki2008} and that also motivate 
 our interest in $\textsf{ParSym}$, including \cite{offthecharts,AliniaeifardLivanWilligenburg2022,BergeronHohlwegRosasZabrocki2006,BergeronThiem2013,BessenrodtLuotovanWilligenburg2011,Thiem2010}. 
 
\section{Irreducible partition diagrams}\label{sectionnewHopf}
 We define 
\begin{equation}\label{gradedcomponent}
 \textsf{ParSym}_{i} = \text{span}_{\mathbbm{k}}\{ \textsf{H}_{\pi} : \pi \in A_{i} \}, 
\end{equation}
 for $i \in \mathbb{N}_{0}$, where an expression of the form $\textsf{H}_{\pi}$, 
 for a partition diagram $\pi$ that is irreducible according to Definition \ref{oirreducible} 
 below, may be seen as a variable, 
 by analogy with the $H$-generators for $\textsf{NSym}$. We adopt the convention whereby 
 $A_{0}$ consists of the unique ``empty partition diagram'' $\varnothing$ without any nodes. By direct analogy with \eqref{homogeneousmult}, 
 we define 
\begin{equation}\label{productParSym}
 \textsf{H}_{\pi} \textsf{H}_{\rho} = \textsf{H}_{\pi \otimes \rho} 
\end{equation}
 for partition diagrams $\pi$ and $\rho$. 

 We define 
\begin{equation}\label{ParSymdefinition}
 \textsf{ParSym} := \bigoplus_{i \in \mathbb{N}_{0}} \textsf{ParSym}_{i}, 
\end{equation}
 and we endow the direct sum of $\mathbbm{k}$-spans in 
 \eqref{ParSymdefinition} with the operation defined in \eqref{productParSym}, extended linearly, 
 yielding an associative operation on $\textsf{ParSym}$. 
 We let the morphism $\eta\colon \mathbbm{k} \to \textsf{ParSym}$ be such that 
\begin{equation}\label{etaParSym}
 \eta\left( 1_{\mathbbm{k}} \right) = \textsf{H}_{\varnothing}, 
\end{equation}
 giving a unit morphism 
 that gives $\textsf{ParSym}$ the structure of an associative $\mathbbm{k}$-algebra. 
 
\begin{definition}\label{oirreducible}
 For a partition diagram $\pi$, let $\pi$ be referred to as being \emph{$\otimes$-irreducible} if it cannot be expressed as $\rho^{(1)} \otimes 
 \rho^{(2)}$ for nonempty partition diagrams $\rho^{(1)}$ and $\rho^{(2)}$. 
\end{definition}

 We are to later refer to the concept of $\bullet$-irreducibility, in contrast to Definition \ref{oirreducible}, for an analogue $\bullet$ of near-concatenations 
 for integer compositions. For the sake of convenience, we may refer to $\otimes$-irreducibility as irreducibility, depending on the context. 

\begin{example}\label{irreducible2}
 We may verify that the irreducible diagrams in $\textsf{ParSym}_{2}$ are as below. 
\begin{align*}
 & \begin{tikzpicture}[scale = 0.5,thick, baseline={(0,-1ex/2)}] 
\tikzstyle{vertex} = [shape = circle, minimum size = 7pt, inner sep = 1pt] 
\node[vertex] (G--2) at (1.5, -1) [shape = circle, draw] {}; 
\node[vertex] (G--1) at (0.0, -1) [shape = circle, draw] {}; 
\node[vertex] (G-1) at (0.0, 1) [shape = circle, draw] {}; 
\node[vertex] (G-2) at (1.5, 1) [shape = circle, draw] {}; 
\draw[] (G-1) .. controls +(0.5, -0.5) and +(-0.5, -0.5) .. (G-2); 
\draw[] (G-2) .. controls +(0, -1) and +(0, 1) .. (G--2); 
\draw[] (G--2) .. controls +(-0.5, 0.5) and +(0.5, 0.5) .. (G--1); 
\draw[] (G--1) .. controls +(0, 1) and +(0, -1) .. (G-1); 
\end{tikzpicture} \ \ \ \ \ \ \ \ \ \
 \begin{tikzpicture}[scale = 0.5,thick, baseline={(0,-1ex/2)}] 
\tikzstyle{vertex} = [shape = circle, minimum size = 7pt, inner sep = 1pt] 
\node[vertex] (G--2) at (1.5, -1) [shape = circle, draw] {}; 
\node[vertex] (G-1) at (0.0, 1) [shape = circle, draw] {}; 
\node[vertex] (G-2) at (1.5, 1) [shape = circle, draw] {}; 
\node[vertex] (G--1) at (0.0, -1) [shape = circle, draw] {}; 
\draw[] (G-1) .. controls +(0.5, -0.5) and +(-0.5, -0.5) .. (G-2); 
\draw[] (G-2) .. controls +(0, -1) and +(0, 1) .. (G--2); 
\draw[] (G--2) .. controls +(-0.75, 1) and +(0.75, -1) .. (G-1); 
\end{tikzpicture} \ \ \ \ \ \ \ \ \ \ 
\begin{tikzpicture}[scale = 0.5,thick, baseline={(0,-1ex/2)}] 
\tikzstyle{vertex} = [shape = circle, minimum size = 7pt, inner sep = 1pt] 
\node[vertex] (G--2) at (1.5, -1) [shape = circle, draw] {}; 
\node[vertex] (G--1) at (0.0, -1) [shape = circle, draw] {}; 
\node[vertex] (G-1) at (0.0, 1) [shape = circle, draw] {}; 
\node[vertex] (G-2) at (1.5, 1) [shape = circle, draw] {}; 
\draw[] (G-1) .. controls +(0.5, -0.5) and +(-0.5, -0.5) .. (G-2); 
\draw[] (G-2) .. controls +(-0.75, -1) and +(0.75, 1) .. (G--1); 
\draw[] (G--1) .. controls +(0, 1) and +(0, -1) .. (G-1); 
\end{tikzpicture} \ \ \ \ \ \ \ \ \ \ 
 \begin{tikzpicture}[scale = 0.5,thick, baseline={(0,-1ex/2)}] 
\tikzstyle{vertex} = [shape = circle, minimum size = 7pt, inner sep = 1pt] 
\node[vertex] (G--2) at (1.5, -1) [shape = circle, draw] {}; 
\node[vertex] (G--1) at (0.0, -1) [shape = circle, draw] {}; 
\node[vertex] (G-1) at (0.0, 1) [shape = circle, draw] {}; 
\node[vertex] (G-2) at (1.5, 1) [shape = circle, draw] {}; 
\draw[] (G--2) .. controls +(-0.5, 0.5) and +(0.5, 0.5) .. (G--1); 
\draw[] (G-1) .. controls +(0.5, -0.5) and +(-0.5, -0.5) .. (G-2); 
\end{tikzpicture} \\ 
 & \ \\ 
 & \begin{tikzpicture}[scale = 0.5,thick, baseline={(0,-1ex/2)}] 
\tikzstyle{vertex} = [shape = circle, minimum size = 7pt, inner sep = 1pt] 
\node[vertex] (G--2) at (1.5, -1) [shape = circle, draw] {}; 
\node[vertex] (G--1) at (0.0, -1) [shape = circle, draw] {}; 
\node[vertex] (G-1) at (0.0, 1) [shape = circle, draw] {}; 
\node[vertex] (G-2) at (1.5, 1) [shape = circle, draw] {}; 
\draw[] (G-1) .. controls +(0.5, -0.5) and +(-0.5, -0.5) .. (G-2); 
\end{tikzpicture} \ \ \ \ \ \ \ \ \ \ 
 \begin{tikzpicture}[scale = 0.5,thick, baseline={(0,-1ex/2)}] 
\tikzstyle{vertex} = [shape = circle, minimum size = 7pt, inner sep = 1pt] 
\node[vertex] (G--2) at (1.5, -1) [shape = circle, draw] {}; 
\node[vertex] (G--1) at (0.0, -1) [shape = circle, draw] {}; 
\node[vertex] (G-1) at (0.0, 1) [shape = circle, draw] {}; 
\node[vertex] (G-2) at (1.5, 1) [shape = circle, draw] {}; 
\draw[] (G-1) .. controls +(0.75, -1) and +(-0.75, 1) .. (G--2); 
\draw[] (G--2) .. controls +(-0.5, 0.5) and +(0.5, 0.5) .. (G--1); 
\draw[] (G--1) .. controls +(0, 1) and +(0, -1) .. (G-1); 
\end{tikzpicture} \ \ \ \ \ \ \ \ \ \ 
 \begin{tikzpicture}[scale = 0.5,thick, baseline={(0,-1ex/2)}] 
\tikzstyle{vertex} = [shape = circle, minimum size = 7pt, inner sep = 1pt] 
\node[vertex] (G--2) at (1.5, -1) [shape = circle, draw] {}; 
\node[vertex] (G-1) at (0.0, 1) [shape = circle, draw] {}; 
\node[vertex] (G--1) at (0.0, -1) [shape = circle, draw] {}; 
\node[vertex] (G-2) at (1.5, 1) [shape = circle, draw] {}; 
\draw[] (G-1) .. controls +(0.75, -1) and +(-0.75, 1) .. (G--2); 
\draw[] (G-2) .. controls +(-0.75, -1) and +(0.75, 1) .. (G--1); 
\end{tikzpicture} \ \ \ \ \ \ \ \ \ \ 
 \begin{tikzpicture}[scale = 0.5,thick, baseline={(0,-1ex/2)}] 
\tikzstyle{vertex} = [shape = circle, minimum size = 7pt, inner sep = 1pt] 
\node[vertex] (G--2) at (1.5, -1) [shape = circle, draw] {}; 
\node[vertex] (G-1) at (0.0, 1) [shape = circle, draw] {}; 
\node[vertex] (G--1) at (0.0, -1) [shape = circle, draw] {}; 
\node[vertex] (G-2) at (1.5, 1) [shape = circle, draw] {}; 
\draw[] (G-1) .. controls +(0.75, -1) and +(-0.75, 1) .. (G--2); 
\end{tikzpicture} \\
 & \ \\
 & \begin{tikzpicture}[scale = 0.5,thick, baseline={(0,-1ex/2)}] 
\tikzstyle{vertex} = [shape = circle, minimum size = 7pt, inner sep = 1pt] 
\node[vertex] (G--2) at (1.5, -1) [shape = circle, draw] {}; 
\node[vertex] (G--1) at (0.0, -1) [shape = circle, draw] {}; 
\node[vertex] (G-2) at (1.5, 1) [shape = circle, draw] {}; 
\node[vertex] (G-1) at (0.0, 1) [shape = circle, draw] {}; 
\draw[] (G-2) .. controls +(0, -1) and +(0, 1) .. (G--2); 
\draw[] (G--2) .. controls +(-0.5, 0.5) and +(0.5, 0.5) .. (G--1); 
\draw[] (G--1) .. controls +(0.75, 1) and +(-0.75, -1) .. (G-2); 
\end{tikzpicture} \ \ \ \ \ \ \ \ \ \ 
 \begin{tikzpicture}[scale = 0.5,thick, baseline={(0,-1ex/2)}] 
\tikzstyle{vertex} = [shape = circle, minimum size = 7pt, inner sep = 1pt] 
\node[vertex] (G--2) at (1.5, -1) [shape = circle, draw] {}; 
\node[vertex] (G--1) at (0.0, -1) [shape = circle, draw] {}; 
\node[vertex] (G-2) at (1.5, 1) [shape = circle, draw] {}; 
\node[vertex] (G-1) at (0.0, 1) [shape = circle, draw] {}; 
\draw[] (G-2) .. controls +(-0.75, -1) and +(0.75, 1) .. (G--1); 
\end{tikzpicture} \ \ \ \ \ \ \ \ \ \ 
 \begin{tikzpicture}[scale = 0.5,thick, baseline={(0,-1ex/2)}] 
\tikzstyle{vertex} = [shape = circle, minimum size = 7pt, inner sep = 1pt] 
\node[vertex] (G--2) at (1.5, -1) [shape = circle, draw] {}; 
\node[vertex] (G--1) at (0.0, -1) [shape = circle, draw] {}; 
\node[vertex] (G-1) at (0.0, 1) [shape = circle, draw] {}; 
\node[vertex] (G-2) at (1.5, 1) [shape = circle, draw] {}; 
\draw[] (G--2) .. controls +(-0.5, 0.5) and +(0.5, 0.5) .. (G--1); 
\end{tikzpicture} \ \ \ \ \ \ \ \ \ \ 
\end{align*}
\end{example}

\begin{remark}\label{realizedthiswas}
 We adopt the convention whereby a given partition diagram denoted as a graph is written in such a way so that any of its edges are within the 
 rectangular formation, including the borders, given by the upper and lower nodes of the graph, 
 which are arranged horizontally, as in the partition diagram illustrations we have 
 previously provided. 
\end{remark}

 Let $\pi$ be a partition diagram in $\textsf{ParSym}_{k}$. 
 We find that $\pi$ 
 may be expressed as the concatenation of 
 two nonempty diagrams if and only if there is a natural number $j \in \{ 1, 2, \ldots, k - 1 \}$ 
 such that $\pi$ may be drawn in such a way so that no edge of $\pi$ crosses 
 a vertical line that is placed between $j$ and $j + 1$ and between $j'$ and $(j+1)'$. 
 We refer to a vertical line of this form as a \emph{separating line}, 
 and we may refer to there being a \emph{separation} between $j$ and $j+1$ and between 
 $j'$ and $(j+1)'$ in $\pi$ if the situation described in the preceding sentence holds. 

\begin{lemma}\label{lemmauniqueotimes}
 Let $\pi$ be a partition diagram. Then $\pi$ can be uniquely written in the form 
 $$ \pi = \pi^{(1)} \otimes \pi^{(2)} \otimes \cdots \otimes \pi^{(n)}, $$
 with $n \in \mathbb{N}$, and where $\pi^{(1)}$, $\pi^{(2)}$, $\ldots$, $\pi^{(n)}$ 
 are $\otimes$-irreducible diagrams. 
\end{lemma}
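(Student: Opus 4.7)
The plan is to exploit the notion of separating line defined in the paragraph preceding the lemma. For existence, I would argue by strong induction on the order $k$ of $\pi$. The base case $k\le 1$ is immediate, since every such diagram is $\otimes$-irreducible. For the inductive step, if $\pi$ is $\otimes$-irreducible, take $n=1$; otherwise, Definition \ref{oirreducible} yields $\pi = \rho^{(1)} \otimes \rho^{(2)}$ with both factors nonempty and of strictly smaller order. Applying the inductive hypothesis to each factor and invoking the evident associativity of horizontal concatenation then supplies the required decomposition of $\pi$.

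For uniqueness, the central observation is that the set of separation positions of $\pi$ is a well-defined combinatorial invariant under the drawing convention of Remark \ref{realizedthiswas}. Specifically, there is a separation between $j$ and $j+1$ in $\pi$ if and only if every block of the underlying set partition is contained either in $\{1,\ldots,j,1',\ldots,j'\}$ or in $\{j+1,\ldots,k,(j+1)',\ldots,k'\}$. One direction is clear, since such a set partition admits an obvious drawing inside each sub-rectangle; the reverse direction uses that an edge drawn within the rectangular formation between a node in the left half and a node in the right half must cross any vertical line placed between the two halves. Let $S(\pi) \subseteq \{1,\ldots,k-1\}$ denote the set of all such $j$.

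Given any factorization $\pi = \pi^{(1)} \otimes \cdots \otimes \pi^{(n)}$ with each factor nonempty, the partial sums $s_i = |\pi^{(1)}| + \cdots + |\pi^{(i)}|$ for $1 \le i \le n-1$ plainly lie in $S(\pi)$. Conversely, the separations of $\pi^{(i)}$ correspond under the shift $j\mapsto j - s_{i-1}$ to the elements of $S(\pi)$ strictly between $s_{i-1}$ and $s_i$ (with the conventions $s_0 = 0$ and $s_n = k$), so $\pi^{(i)}$ is $\otimes$-irreducible precisely when no element of $S(\pi)$ lies strictly between $s_{i-1}$ and $s_i$. Demanding this for every index $i$ forces $\{s_1,\ldots,s_{n-1}\} = S(\pi)$, which fixes $n$ and each $\pi^{(i)}$ in terms of $\pi$.

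The principal obstacle is the verification of the combinatorial characterization of separation positions, which requires a careful reading of the drawing convention in Remark \ref{realizedthiswas} together with an essentially planar argument that edges confined to the rectangle must cross any separating line they span. Once this is established, the remainder of the proof reduces to the bookkeeping with partial sums sketched above.
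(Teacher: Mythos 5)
Your proposal is correct and follows essentially the same route as the paper: the paper's proof also cuts the diagram at all of its separations and argues that this yields the unique factorization into $\otimes$-irreducible pieces, treating the uniqueness as immediate from comparing the resulting tuples. Your version merely makes explicit what the paper leaves implicit, namely the block-theoretic characterization of separation positions and the partial-sum bookkeeping showing that irreducibility of every factor forces the cut positions to be exactly the set $S(\pi)$.
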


\begin{proof}
 A partition diagram is non-irreducible if and only if it has at least one separation. 
 By taking all possible separations for a given partition diagram, 
 this leads us to construct a bijection between 
 non-irreducible partition diagrams $\rho$ of order $k$ and ordered tuples of the form 
\begin{equation}\label{orderedform}
 \left( \pi^{\alpha_{1}}, \pi^{\alpha_{2}}, \ldots, \pi^{\alpha_{\ell(\alpha)}} \right), 
\end{equation}
 where $\pi^{\alpha_{i}}$, for a given index $i$, is an irreducible partition diagram 
 of order $\alpha_{i}$, and where $\alpha = (\alpha_{1}, \alpha_{2}, \ldots, \alpha_{\ell(\alpha)})$
 is an integer composition of $k$ such that $\alpha \neq (k)$. Explicitly, 
 the separations for a given diagram $\rho$
 partition $\rho$ in such a way so that 
\begin{equation}\label{takeallforrho}
 \rho = \pi^{\alpha_{1}} \otimes \pi^{\alpha_{2}} \otimes \cdots \otimes \pi^{\alpha_{\ell(\alpha)}}, 
\end{equation}
 so we let $\rho$ be mapped to \eqref{orderedform}, 
 with the surjectivity being immediate from the condition that $\alpha \neq (k)$. 
 For domain elements 
 $ \rho^{(1)} = \pi^{\alpha_{1}} \otimes \pi^{\alpha_{2}} \otimes \cdots 
 \otimes \pi^{\alpha_{\ell(\alpha)}}$ 
 and $ \rho^{(2)} = \mu^{\beta_{1}} \otimes \mu^{\beta_{2}} 
 \otimes \cdots \otimes \mu^{\beta_{\ell(\beta)}}$ written as $\otimes$-products of nonempty, irreducible diagrams, with at least two 
 factors in each case, 
 the equality of the images of these domain elements
 gives us that $\rho^{(1)} = \rho^{(2)}$ in an immediate fashion, 
 by comparing the lengths and entries of the corresponding tuples 
 and by appealing to the irreducibility of the entries. 
\end{proof}

\begin{theorem}\label{recursiona}
 Let $a_{k}$ denote the number of irreducible diagrams in $A_{k} $
 for a positive integer $k$. Then the recursion 
\begin{equation}\label{290923808582878284858AM1A}
 a_{k} = B_{2 k} - \sum_{\substack{\alpha \vDash k \\ \alpha \neq (k) }} 
 a_{\alpha_{1}} a_{\alpha_{2}} \cdots a_{\alpha_{\ell(\alpha)}} 
\end{equation}
 holds, with $a_{1} = 2$. 
\end{theorem}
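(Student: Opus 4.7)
The plan is to leverage Lemma \ref{lemmauniqueotimes} to set up a bijective/enumerative identity and then solve it for $a_k$. First, I would record the basic fact that $|A_k| = B_{2k}$, which was noted in the preliminaries since partition diagrams of order $k$ are exactly set partitions of the $2k$-element set $\{1,\ldots,k,1',\ldots,k'\}$.

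Next, I would apply Lemma \ref{lemmauniqueotimes}: every $\pi \in A_k$ admits a unique decomposition
\[
 \pi = \pi^{(1)} \otimes \pi^{(2)} \otimes \cdots \otimes \pi^{(n)}
\]
into $\otimes$-irreducible diagrams. Grouping the decompositions of all diagrams in $A_k$ according to the composition $\alpha = (\alpha_1,\ldots,\alpha_{\ell(\alpha)}) \vDash k$ recording the orders of the irreducible factors, the unique-decomposition lemma yields the bijection
\[
 A_k \;\longleftrightarrow\; \bigsqcup_{\alpha \vDash k} \; \bigl\{(\pi^{\alpha_1},\ldots,\pi^{\alpha_{\ell(\alpha)}}) : \pi^{\alpha_i} \text{ irreducible of order } \alpha_i\bigr\}.
\]
Counting both sides gives
\[
 B_{2k} \;=\; \sum_{\alpha \vDash k} a_{\alpha_1} a_{\alpha_2} \cdots a_{\alpha_{\ell(\alpha)}}.
\]
Isolating the unique term corresponding to the one-part composition $\alpha = (k)$, which contributes $a_k$, and moving the remaining terms to the left-hand side immediately produces the recursion \eqref{290923808582878284858AM1A}.

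For the base case $a_1 = 2$, I would observe that $A_1$ consists of the $B_2 = 2$ set partitions $\{\{1,1'\}\}$ and $\{\{1\},\{1'\}\}$ of $\{1,1'\}$, and that any diagram of order $1$ is vacuously $\otimes$-irreducible since there is no index $j \in \{1,\ldots,k-1\}$ at which a separation could occur when $k=1$.

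I do not anticipate a genuine obstacle here; the only mild subtlety is ensuring that the $\alpha = (k)$ summand on the right of the unsplit identity is precisely $a_k$ (i.e., that the one-part composition corresponds to the $\otimes$-irreducible diagrams themselves), which is built into the statement of Lemma \ref{lemmauniqueotimes} by allowing $n=1$ in the decomposition.
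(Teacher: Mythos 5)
Your proposal is correct and follows essentially the same route as the paper: both rest on Lemma \ref{lemmauniqueotimes} to identify $A_k$ with tuples of $\otimes$-irreducible factors indexed by compositions of $k$, the only cosmetic difference being that you write the full identity $B_{2k} = \sum_{\alpha \vDash k} a_{\alpha_1}\cdots a_{\alpha_{\ell(\alpha)}}$ and isolate the $\alpha=(k)$ term, while the paper directly subtracts the count of non-irreducible diagrams. Your explicit check of the base case $a_1=2$ is a small, welcome addition.
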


\begin{proof}
 By Lemma \ref{lemmauniqueotimes}, the number of irreducible diagrams in $A_{k}$ is equal to $B_{2k}$ minus the cardinality of the set of all possible 
 tuples that are of the form indicated in \eqref{orderedform} and that are subject to the given conditions. 
 This gives us an equivalent version of \eqref{290923808582878284858AM1A}. 
\end{proof}

\begin{example}
 According to the recursion and the initial condition specified in Theorem \ref{recursiona}, 
 we find that 
\begin{equation}\label{notOEIS}
 (a_{k} : k \in \mathbb{N}) = (2, 11, 151, 3267, 96663, 3663123, 171131871, \ldots), 
\end{equation}
 noting that the diagrams corresponding to the $a_{2} = 11$ evaluation are shown in Example \ref{irreducible2}. 
 The integer sequence in \eqref{notOEIS}
 is not currently included in the On-Line Encyclopedia of Integer Sequences, 
 and neither of the integers among 3663123 and 
 171131871 shown in \eqref{notOEIS} are currently in the OEIS. 
 This strongly suggests that 
 the free algebra structure indicated in Theorem \ref{2qqqqq0q2q3q0q7q1q610AAA4akm1a} is original. 
\end{example}

 A \emph{free $\mathbbm{k}$-algebra} 
 is of the form 
\begin{equation}\label{freedefinition}
 \mathbbm{k}\langle X \rangle = \bigoplus_{w \in X^{\ast}} \mathbbm{k}w, 
\end{equation}
 where the multiplicative operation is given by concatenation of words in the free monoid $X^{\ast}$, and where this operation is extended linearly, and 
 where $\mathbbm{k}w$ denotes the free $\mathbbm{k}$-module on the singleton set $\{ w \}$. 

\begin{theorem}\label{2qqqqq0q2q3q0q7q1q610AAA4akm1a}
 As a $\mathbbm{k}$-algebra, $\textsf{\emph{ParSym}}$ is the free $\mathbbm{k}$-algebra
 with $a_{k}$ generators in each degree, for $a_{k}$ as in Theorem \ref{recursiona}. 
\end{theorem}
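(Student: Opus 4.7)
The plan is to recognize the theorem as the algebraic repackaging of Lemma \ref{lemmauniqueotimes}. I would let $X = \bigsqcup_{k \geq 1} \{\textsf{H}_{\pi} : \pi \in A_{k} \text{ is } \otimes\text{-irreducible}\}$, viewed as a graded set with $|X \cap \textsf{ParSym}_{k}| = a_{k}$ by the definition of $a_{k}$ in Theorem \ref{recursiona}. I then form the free $\mathbbm{k}$-algebra $\mathbbm{k}\langle X \rangle$ as in \eqref{freedefinition} and define a $\mathbbm{k}$-algebra homomorphism $\phi \colon \mathbbm{k}\langle X \rangle \to \textsf{ParSym}$ via the universal property of the free algebra, sending each generator $\textsf{H}_{\pi} \in X$ to the basis element $\textsf{H}_{\pi} \in \textsf{ParSym}$.

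By the universal property and the multiplication rule \eqref{productParSym}, the map $\phi$ sends a monomial word $\textsf{H}_{\pi^{(1)}} \textsf{H}_{\pi^{(2)}} \cdots \textsf{H}_{\pi^{(n)}}$ with each $\pi^{(i)}$ irreducible to $\textsf{H}_{\pi^{(1)} \otimes \pi^{(2)} \otimes \cdots \otimes \pi^{(n)}}$. This shows at once that $\phi$ is a homomorphism of graded algebras, since concatenation of words in $X^{\ast}$ corresponds precisely to the $\otimes$-concatenation of the tuples of irreducible factors, and since the order of $\pi^{(1)} \otimes \cdots \otimes \pi^{(n)}$ equals $\sum_{i} |\pi^{(i)}|$, matching the grading by total degree on $\mathbbm{k}\langle X \rangle$.

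To conclude that $\phi$ is an isomorphism, I would compare distinguished bases on the two sides. The monoid $X^{\ast}$ is a $\mathbbm{k}$-basis for $\mathbbm{k}\langle X \rangle$, and $\{\textsf{H}_{\pi} : \pi \in \bigcup_{i \geq 0} A_{i}\}$ is a $\mathbbm{k}$-basis for $\textsf{ParSym}$ by \eqref{gradedcomponent} and \eqref{ParSymdefinition}. Lemma \ref{lemmauniqueotimes} gives a bijection between tuples $(\pi^{(1)}, \ldots, \pi^{(n)})$ of $\otimes$-irreducibles and arbitrary partition diagrams $\pi = \pi^{(1)} \otimes \cdots \otimes \pi^{(n)}$, including the empty tuple corresponding to $\varnothing \in A_{0}$ and the length-one tuples corresponding to $\otimes$-irreducible diagrams themselves. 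Since $\phi$ implements exactly this bijection between bases, it is a $\mathbbm{k}$-linear isomorphism, hence an isomorphism of graded $\mathbbm{k}$-algebras.

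There is no real obstacle: the entire content is contained in Lemma \ref{lemmauniqueotimes}, and the theorem simply translates that unique factorization statement into the assertion that $\textsf{ParSym}$ is free with generating set $X$. The only points requiring care are verifying that $\varnothing$ is handled correctly (it serves as the empty product $\textsf{H}_{\varnothing}$, i.e., the unit identified by \eqref{etaParSym}, matching the empty word in $X^{\ast}$) and that the degree count yields precisely $a_{k}$ generators in degree $k$, both of which are immediate from the definitions.
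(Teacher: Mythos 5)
Your proposal is correct and follows essentially the same route as the paper: both take $X$ to be the set of $\otimes$-irreducible diagrams and invoke the unique factorization of Lemma \ref{lemmauniqueotimes} to identify the $\textsf{H}$-basis of $\textsf{ParSym}$ with the word basis of $\mathbbm{k}\langle X\rangle$. Your write-up is somewhat more explicit about the universal property, the grading, and the handling of $\varnothing$, but the mathematical content is identical.
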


\begin{proof}
 From \eqref{ParSymdefinition}, we may obtain an algebra isomorphism with an algebra of the form indicated in \eqref{freedefinition}, by expressing the 
 basis elements in the graded components shown in \eqref{ParSymdefinition} using irreducible diagrams, in the following manner. 
 According to Lemma \ref{lemmauniqueotimes}, 
 for a partition diagram $\rho$, by taking all possible separating lines for $\rho$, this gives us how $\rho$ may be expressed in a 
 unique way as a concatenation of irreducible diagrams. With respect to the notation in \eqref{freedefinition}, we set $X$ to be the set of all irreducible 
 diagrams. So, by identifying the concatenation in \eqref{takeallforrho} with the word $ \pi^{\alpha_{1}} \pi^{\alpha_{2}} \cdots 
 \pi^{\alpha_{\ell(\alpha)}}$ in $X^{\ast}$, this gives us an algebra isomorphism of the desired form. 
\end{proof}

\begin{definition}
 For a sequence $(\textsf{a}_{n} : n \in \mathbb{N})$, the \emph{Boolean transform} $(\textsf{b}_{n} : n \in \mathbb{N})$ for the $\textsf{a}$-sequence 
 may be defined so that (cf.\ \cite{AguiarMahajan2014}) 
\begin{equation}\label{Booleangf}
 \sum_{n=1}^{\infty} \textsf{b}_{n} x^{n} := 1 - \frac{1}{1 + \sum_{n=1}^{\infty} \textsf{a}_{n} x^{n}}. 
\end{equation}
\end{definition}

 From the definition in \eqref{Booleangf}, a combinatorial argument involving the Cauchy product of the generating functions involved can be used to prove 
 the equivalent definition of the Boolean transform indicated below \cite{AguiarMahajan2014}: 
\begin{equation}\label{equivalentBoolean}
 \textsf{a}_{n} = \sum_{\alpha \vDash n} 
 \textsf{b}_{\alpha_{1}} \textsf{b}_{\alpha_{2}} \cdots \textsf{b}_{\alpha_{\ell(\alpha)}}. 
\end{equation}
 So, by rewriting \eqref{equivalentBoolean} so that 
\begin{equation}\label{sfbsfa}
 \textsf{b}_{n} = \textsf{a}_{n} 
 - \sum_{\substack{ \alpha \vDash n \\ \alpha \neq (n) }} 
 \textsf{b}_{\alpha_{1}} \textsf{b}_{\alpha_{2}} \cdots \textsf{b}_{\alpha_{\ell(\alpha)}}, 
\end{equation}
 we find that the recursion in Theorem \ref{recursiona} is of the form indicated in \eqref{sfbsfa}, leading us toward the generating function highlighted in 
 Theorem \ref{theoremgf} below. This gives us an analogue for partition diagrams of Comtet's formula for enumerating 
 irreducible permutations \cite{Comtet1972} (cf.\ \cite{GaoKitaevZhang2018,King2006}). 

\begin{theorem}\label{theoremgf}
 The generating function for the sequence $(a_{k} : k \in \mathbb{N})$ given in Theorem \ref{recursiona} satisfies $$ \sum_{k=1}^{\infty} a_{k} 
 x^{k} = 1 - \frac{1}{1 + \sum_{k=1}^{\infty} B_{2k} x^{k}}. $$ 
\end{theorem}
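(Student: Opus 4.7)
The plan is to observe that the content of this theorem is essentially a restatement of Theorem \ref{recursiona} in generating-function form. Explicitly, if we set $\textsf{a}_{n} := B_{2n}$, then the recursive form \eqref{sfbsfa} of the Boolean transform reads
$$\textsf{b}_{n} = B_{2n} - \sum_{\substack{\alpha \vDash n \\ \alpha \neq (n)}} \textsf{b}_{\alpha_{1}} \textsf{b}_{\alpha_{2}} \cdots \textsf{b}_{\alpha_{\ell(\alpha)}},$$
which is formally identical to the recursion \eqref{290923808582878284858AM1A} satisfied by $(a_{k})$. So the first step would be simply to record this coincidence.

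Next, I would argue that \eqref{sfbsfa} determines $(\textsf{b}_{n})$ uniquely from $(\textsf{a}_{n})$, since excluding the composition $\alpha = (n)$ from the sum ensures that $\textsf{b}_{n}$ is expressed only in terms of $\textsf{a}_{n}$ together with the values $\textsf{b}_{i}$ for $i < n$. Combined with the matching base case $a_{1} = 2 = B_{2}$ (consistent with the convention that the sum in \eqref{sfbsfa} is empty at $n = 1$), a routine induction on $n$ yields $a_{n} = \textsf{b}_{n}$ for every $n \in \mathbb{N}$, where $(\textsf{b}_{n})$ is by definition the Boolean transform of $(B_{2n})$. Substituting into the defining equation \eqref{Booleangf} then gives the claimed generating function identity.

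There is no serious obstacle here: all of the combinatorial content has been absorbed into Lemma \ref{lemmauniqueotimes} and Theorem \ref{recursiona}, and the present theorem is just a translation into generating-function form via the equivalence, already stated in the excerpt, between \eqref{Booleangf} and \eqref{equivalentBoolean}. If one preferred a more direct argument, one could instead multiply both sides of the claimed identity by $1 + \sum_{k \geq 1} B_{2k} x^{k}$, expand, and match coefficients using \eqref{takeallforrho} from the proof of Lemma \ref{lemmauniqueotimes}; but identifying the recursions is cleaner and needs no further combinatorial input.
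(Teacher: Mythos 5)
Your proposal is correct and follows essentially the same route as the paper: both identify the recursion of Theorem \ref{recursiona} with the recursive form \eqref{sfbsfa} of the Boolean transform applied to $(B_{2k})$, and then read off the generating function identity from the equivalence of \eqref{Booleangf} and \eqref{sfbsfa}. Your added remark that \eqref{sfbsfa} uniquely determines the transformed sequence by induction is a reasonable (and slightly more explicit) justification of the step the paper treats as immediate.
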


\begin{proof}
 Since $(a_{k} : k \in \mathbb{N})$ is the Boolean transform of $(B_{2k} : k \in \mathbb{N})$, 
 the desired generating function identity is immediate from the equivalence of 
 \eqref{Booleangf} and \eqref{sfbsfa}. 
\end{proof}

\section{A CHA on partition diagrams}\label{sectionCHA}
 For a coalgebra $C$ over $\mathbbm{k}$, the coproduct morphism $\Delta\colon C \to C \otimes C$ satisfies the coassociativity axiom, and for a 
 bialgebra, the operation $\Delta$ would be compatible with the multiplicative operation. So, to endow $\textsf{ParSym}$ with a bialgebra structure, 
 we would need to define a coproduct operation $\Delta$ such that 
\begin{equation}\label{Deltacompatible}
 \Delta(\textsf{H}_{\pi} \textsf{H}_{\rho}) = \Delta(\textsf{H}_{\pi}) \Delta(\textsf{H}_{\rho})
\end{equation}
 for irreducible generators $\textsf{H}_{\pi}$ and $\textsf{H}_{\rho}$ in $\textsf{ParSym}$, with 
\begin{align*}
 \Delta\left( \textsf{H}_{\pi^{(1)} \otimes \pi^{(2)} \otimes \cdots \otimes \pi^{(\ell)} } \right) 
 & = \Delta\left( \textsf{H}_{\pi^{(1)} } \textsf{H}_{\pi^{(2)} } \cdots \textsf{H}_{\pi^{(\ell)} } \right) \\ 
 & = \Delta\left( \textsf{H}_{\pi^{(1)} } \right)
 \Delta\left( \textsf{H}_{\pi^{(2)} } \right) \cdots \Delta\left( \textsf{H}_{\pi^{(\ell)} } \right) 
\end{align*}
 for $\otimes$-irreducible partition diagrams $\pi^{(1)}$, $\pi^{(2)}$, $\ldots$, $\pi^{(\ell)}$. 
 This leads us to consider what would be appropriate as a way of lifting the 
 expansion formula 
\begin{equation}\label{DeltaNSym}
 \Delta\left( H_{n} \right) = H_{0} \otimes H_{n} 
 + H_{1} \otimes H_{n-1} + \cdots + H_{n} \otimes H_{0} 
\end{equation}
 for coproducts of generators in $\textsf{NSym}$. Since the generators of $\textsf{ParSym}$ are indexed by graphs, this leads us to consider how 
 the expansion in \eqref{DeltaNSym} could be reformulated and lifted in a graph-theoretic way. 

 The generator $H_{n}$ may be rewritten so as to be indexed by an integer composition, in accordance with \eqref{completefamily}, writing $H_{n} = 
 H_{(n)}$. By denoting compositions as composition tableaux, we may let this generator be denoted as 
\begin{equation}\label{Hyoung}
 H_{\underbrace{\young(\null\null\cdots\null)}_{n}}. 
\end{equation}
 We may rewrite the index in \eqref{Hyoung} as a path graph with $n$ nodes, by analogy with Ferrers diagrams and by analogy with 
 our notation for partition diagrams as in \eqref{diagramgraphex}: 
\begin{equation}\label{Hpath}
 H_{ \begin{tikzpicture}[scale = 0.5,thick, baseline={(0,-1ex/2)}] 
\tikzstyle{vertex} = [shape = circle, minimum size = 7pt, inner sep = 1pt] 
\node[vertex] (G-1) at (0.0, 0) [shape = circle, draw] {}; 
\node[vertex] (G-2) at (1.5, 0) [shape = circle, draw] {}; 
\node[vertex] (G-3) at (3.0, 0) [shape = circle, draw] {}; 
\draw[] (G-1) .. controls +(0.4, -0.0) and +(-0.4, -0.0) .. (G-2); 
\draw[] (G-2) .. controls +(0.4, -0.0) and +(-0.4, -0.0) .. (G-3); 
\end{tikzpicture} \ \cdots \ 
 \begin{tikzpicture}[scale = 0.5,thick, baseline={(0,-1ex/2)}] 
\tikzstyle{vertex} = [shape = circle, minimum size = 7pt, inner sep = 1pt] 
\node[vertex] (G-1) at (0.0, 0) [shape = circle, draw] {}; 
\node[vertex] (G-2) at (1.5, 0) [shape = circle, draw] {}; 
\draw[] (G-1) .. controls +(0.4, -0.0) and +(-0.4, -0.0) .. (G-2); 
\end{tikzpicture}}. 
\end{equation}

\subsection{Near-concatenations of partition diagrams}\label{subsectionNear}
 The \emph{near-concatenation} of compositions $\alpha$ and $\beta$, which we may denote as $\alpha \bullet \beta$, is given by the composition 
 $(\alpha_{1}$, $\alpha_{2}$, $ \ldots$, $ \alpha_{\ell(\alpha) - 1}$, $ \alpha_{\ell(\alpha)} $ $ + $ $ \beta_{1}$, 
 $ \beta_{2}$, $ \beta_{3}$, $ \ldots$, $ \beta_{\ell(\beta)} )$
 and naturally arises in the context of noncommutative symmetric functions, as 
 in with the multiplication rule for the ribbon basis of $\textsf{NSym}$. 
 Grinberg \cite{Grinberg2017} has explored the free algebra structure given by endowing the dual of $\textsf{NSym}$ 
 with an operation that may be defined via the near-concatenation
 of compositions, and this has inspired us to 
 reformulate $\textsf{NSym}$ in a related way in terms of $\bullet$. 

 According to the notation in \eqref{Hpath}, the binary operation $\bullet$ has the effect of joining concatenated path graphs to form a path graph, 
 by adding an edge appropriately. If we take the singleton set consisting of $H_{(1)}$, we may redefine $\textsf{NSym}$ as the underlying algebra of a 
 free structure generated by this singleton set 
    with the multiplicative operations     $\bullet$   and $\circ$, 
  where $\circ$ denotes the usual  multiplication operation for $\textsf{NSym}$. 

\begin{example}
 We may rewrite the complete homogeneous basis element $ H_{(3, 1, 4)}$ as 
\begin{equation}\label{bulletwithcirc}
 H_{1} \bullet H_{1} \bullet H_{1} \circ H_{1} \circ H_{1} \bullet H_{1} \bullet H_{1} \bullet H_{1}
\end{equation}
 or as 
\begin{equation}\label{path314}
 H_{ \begin{tikzpicture}[scale = 0.5,thick, baseline={(0,-1ex/2)}] 
\tikzstyle{vertex} = [shape = circle, minimum size = 7pt, inner sep = 1pt] 
\node[vertex] (G-1) at (0.0, 0) [shape = circle, draw] {}; 
\node[vertex] (G-2) at (1.5, 0) [shape = circle, draw] {}; 
\node[vertex] (G-3) at (3.0, 0) [shape = circle, draw] {}; 
\draw[] (G-1) .. controls +(0.4, -0.0) and +(-0.4, -0.0) .. (G-2); 
\draw[] (G-2) .. controls +(0.4, -0.0) and +(-0.4, -0.0) .. (G-3); 
\end{tikzpicture}} \ H_{ \begin{tikzpicture}[scale = 0.5,thick, baseline={(0,-1ex/2)}] 
\tikzstyle{vertex} = [shape = circle, minimum size = 7pt, inner sep = 1pt] 
\node[vertex] (G-1) at (0.0, 0) [shape = circle, draw] {}; 
\end{tikzpicture} 
 } \ H_{ \begin{tikzpicture}[scale = 0.5,thick, baseline={(0,-1ex/2)}] 
\tikzstyle{vertex} = [shape = circle, minimum size = 7pt, inner sep = 1pt] 
\node[vertex] (G-1) at (0.0, 0) [shape = circle, draw] {}; 
\node[vertex] (G-2) at (1.5, 0) [shape = circle, draw] {}; 
\node[vertex] (G-3) at (3.0, 0) [shape = circle, draw] {}; 
\node[vertex] (G-4) at (4.5, 0) [shape = circle, draw] {}; 
\draw[] (G-1) .. controls +(0.4, -0.0) and +(-0.4, -0.0) .. (G-2); 
\draw[] (G-2) .. controls +(0.4, -0.0) and +(-0.4, -0.0) .. (G-3); 
\draw[] (G-3) .. controls +(0.4, -0.0) and +(-0.4, -0.0) .. (G-4); 
\end{tikzpicture}}. 
\end{equation}
\end{example}

 By rewriting \eqref{DeltaNSym} as the sum of all expressions of the form $H_{i} \otimes H_{j}$ such that $H_{i} \bullet H_{j} = H_{n}$, this, together 
 with the graph-theoretic interpretation of the operation $\bullet$, leads us to consider a lifting of the coproduct of $\textsf{NSym}$ to $\textsf{ParSym}$. 

\begin{definition}\label{bulletdefinition}
 For nonempty partition diagrams $\pi$ and $\rho$, define the binary operation $\bullet$ so that 
 $ \textsf{H}_{\pi} \bullet \textsf{H}_{\rho} = \textsf{H}_{G} $ and 
 $ \pi \bullet \rho = G$, 
 where $G$ denotes the partition diagram equivalent to the graph $G$ obtained by joining the graphs of $\pi$ and $\rho$ with an edge incident with the 
 bottom-right node of $\pi$ and the bottom-left node of $\rho$. By convention, we set $\textsf{H}_{\varnothing} \bullet \textsf{H}_{\rho} = 
 \textsf{H}_{\rho}$ and $\textsf{H}_{\pi} \bullet \textsf{H}_{\varnothing} = \textsf{H}_{\pi}$, 
 with $\varnothing \bullet \rho = \rho$ and $\pi \bullet \varnothing = \pi$. 
\end{definition}

 Definition \ref{bulletdefinition} 
 gives us an analogue of near-concatenation for partition diagrams, 
 in view, for example, of the equivalence of \eqref{bulletwithcirc} and \eqref{path314}. 

\begin{example}
 According to the above definition of $\bullet$ for $\textsf{ParSym}$, we have that 
 $$ \textsf{H}_{ \begin{tikzpicture}[scale = 0.5,thick, baseline={(0,-1ex/2)}] 
\tikzstyle{vertex} = [shape = circle, minimum size = 7pt, inner sep = 1pt] 
\node[vertex] (G--3) at (3.0, -1) [shape = circle, draw] {}; 
\node[vertex] (G--2) at (1.5, -1) [shape = circle, draw] {}; 
\node[vertex] (G--1) at (0.0, -1) [shape = circle, draw] {}; 
\node[vertex] (G-3) at (3.0, 1) [shape = circle, draw] {}; 
\node[vertex] (G-1) at (0.0, 1) [shape = circle, draw] {}; 
\node[vertex] (G-2) at (1.5, 1) [shape = circle, draw] {}; 
\draw[] (G--3) .. controls +(-0.5, 0.5) and +(0.5, 0.5) .. (G--2); 
\draw[] (G-3) .. controls +(-1, -1) and +(1, 1) .. (G--1); 
\draw[] (G-1) .. controls +(0.5, -0.5) and +(-0.5, -0.5) .. (G-2); 
\end{tikzpicture}} \bullet \textsf{H}_{\begin{tikzpicture}[scale = 0.5,thick, baseline={(0,-1ex/2)}] 
\tikzstyle{vertex} = [shape = circle, minimum size = 7pt, inner sep = 1pt] 
\node[vertex] (G--4) at (4.5, -1) [shape = circle, draw] {}; 
\node[vertex] (G--3) at (3.0, -1) [shape = circle, draw] {}; 
\node[vertex] (G--1) at (0.0, -1) [shape = circle, draw] {}; 
\node[vertex] (G-1) at (0.0, 1) [shape = circle, draw] {}; 
\node[vertex] (G-2) at (1.5, 1) [shape = circle, draw] {}; 
\node[vertex] (G-3) at (3.0, 1) [shape = circle, draw] {}; 
\node[vertex] (G--2) at (1.5, -1) [shape = circle, draw] {}; 
\node[vertex] (G-4) at (4.5, 1) [shape = circle, draw] {}; 
\draw[] (G-1) .. controls +(0.5, -0.5) and +(-0.5, -0.5) .. (G-2); 
\draw[] (G-2) .. controls +(0.5, -0.5) and +(-0.5, -0.5) .. (G-3); 
\draw[] (G-3) .. controls +(0.75, -1) and +(-0.75, 1) .. (G--4); 
\draw[] (G--4) .. controls +(-0.5, 0.5) and +(0.5, 0.5) .. (G--3); 
\draw[] (G--3) .. controls +(-0.6, 0.6) and +(0.6, 0.6) .. (G--1); 
\draw[] (G--1) .. controls +(0, 1) and +(0, -1) .. (G-1); 
\draw[] (G-4) .. controls +(-1, -1) and +(1, 1) .. (G--2); 
\end{tikzpicture}} $$
 evaluates as 
 $$ \textsf{H}_{\begin{tikzpicture}[scale = 0.5,thick, baseline={(0,-1ex/2)}] 
\tikzstyle{vertex} = [shape = circle, minimum size = 7pt, inner sep = 1pt] 
\node[vertex] (G--7) at (9.0, -1) [shape = circle, draw] {}; 
\node[vertex] (G--6) at (7.5, -1) [shape = circle, draw] {}; 
\node[vertex] (G--4) at (4.5, -1) [shape = circle, draw] {}; 
\node[vertex] (G--3) at (3.0, -1) [shape = circle, draw] {}; 
\node[vertex] (G--2) at (1.5, -1) [shape = circle, draw] {}; 
\node[vertex] (G-4) at (4.5, 1) [shape = circle, draw] {}; 
\node[vertex] (G-5) at (6.0, 1) [shape = circle, draw] {}; 
\node[vertex] (G-6) at (7.5, 1) [shape = circle, draw] {}; 
\node[vertex] (G--5) at (6.0, -1) [shape = circle, draw] {}; 
\node[vertex] (G-7) at (9.0, 1) [shape = circle, draw] {}; 
\node[vertex] (G--1) at (0.0, -1) [shape = circle, draw] {}; 
\node[vertex] (G-3) at (3.0, 1) [shape = circle, draw] {}; 
\node[vertex] (G-1) at (0.0, 1) [shape = circle, draw] {}; 
\node[vertex] (G-2) at (1.5, 1) [shape = circle, draw] {}; 
\draw[] (G-4) .. controls +(0.5, -0.5) and +(-0.5, -0.5) .. (G-5); 
\draw[] (G-5) .. controls +(0.5, -0.5) and +(-0.5, -0.5) .. (G-6); 
\draw[] (G-6) .. controls +(0.75, -1) and +(-0.75, 1) .. (G--7); 
\draw[] (G--7) .. controls +(-0.5, 0.5) and +(0.5, 0.5) .. (G--6); 
\draw[] (G--6) .. controls +(-0.6, 0.6) and +(0.6, 0.6) .. (G--4); 
\draw[] (G--4) .. controls +(-0.5, 0.5) and +(0.5, 0.5) .. (G--3); 
\draw[] (G--3) .. controls +(-0.5, 0.5) and +(0.5, 0.5) .. (G--2); 
\draw[] (G--2) .. controls +(1, 1) and +(-1, -1) .. (G-4); 
\draw[] (G-7) .. controls +(-1, -1) and +(1, 1) .. (G--5); 
\draw[] (G-3) .. controls +(-1, -1) and +(1, 1) .. (G--1); 
\draw[] (G-1) .. controls +(0.5, -0.5) and +(-0.5, -0.5) .. (G-2); 
\end{tikzpicture}}. $$
\end{example}

\begin{definition}\label{DeltaParSym}
 For an irreducible partition diagram $\pi$, define the coproduct operation
 $\Delta$ on $\textsf{ParSym} $ so that 
\begin{equation}\label{displayDelta}
 \Delta \textsf{H}_{\pi} = \sum_{G_{1}, G_{2}} 
 \textsf{H}_{G_{1}} \otimes \textsf{H}_{G_{2}}, 
\end{equation}
 where the sum in \eqref{displayDelta} is over all $\otimes$-irreducible partitions diagrams $G_{1}$ and $G_{2}$ such that $ 
 \textsf{H}_{G_{1}} \bullet \textsf{H}_{G_{2}} = \textsf{H}_{\pi}$. We let $\Delta$ be compatible with the multiplicative operation of $\textsf{ParSym}$, 
 i.e., so that \eqref{Deltacompatible} holds, and we let $\Delta$ be extended linearly, so as to obtain a morphism $\Delta\colon \textsf{ParSym} \to 
 \textsf{ParSym} \otimes \textsf{ParSym}$. 
\end{definition}

\begin{example}\label{exampleDelta}
 According to the definition of $\Delta$ for $\textsf{ParSym}$, we have that 
\begin{align*}
 & \Delta \textsf{H}_{\begin{tikzpicture}[scale = 0.5,thick, baseline={(0,-1ex/2)}] 
 \tikzstyle{vertex} = [shape = circle, minimum size = 7pt, inner sep = 1pt] 
 \node[vertex] (G--4) at (4.5, -1) [shape = circle, draw] {}; 
 \node[vertex] (G--3) at (3.0, -1) [shape = circle, draw] {}; 
 \node[vertex] (G--2) at (1.5, -1) [shape = circle, draw] {}; 
\node[vertex] (G--1) at (0.0, -1) [shape = circle, draw] {}; 
\node[vertex] (G-1) at (0.0, 1) [shape = circle, draw] {}; 
\node[vertex] (G-2) at (1.5, 1) [shape = circle, draw] {}; 
\node[vertex] (G-3) at (3.0, 1) [shape = circle, draw] {}; 
\node[vertex] (G-4) at (4.5, 1) [shape = circle, draw] {}; 
\draw[] (G--4) .. controls +(-0.5, 0.5) and +(0.5, 0.5) .. (G--3); 
\draw[] (G--2) .. controls +(-0.5, 0.5) and +(0.5, 0.5) .. (G--1); 
\draw[] (G-1) .. controls +(0.5, -0.5) and +(-0.5, -0.5) .. (G-2); 
\draw[] (G-2) .. controls +(0.5, -0.5) and +(-0.5, -0.5) .. (G-3); 
\end{tikzpicture}} = \\
 & \textsf{H}_{\begin{tikzpicture}[scale = 0.5,thick, baseline={(0,-1ex/2)}] 
 \tikzstyle{vertex} = [shape = circle, minimum size = 7pt, inner sep = 1pt] 
 \node[vertex] (G--4) at (4.5, -1) [shape = circle, draw] {}; 
 \node[vertex] (G--3) at (3.0, -1) [shape = circle, draw] {}; 
 \node[vertex] (G--2) at (1.5, -1) [shape = circle, draw] {}; 
\node[vertex] (G--1) at (0.0, -1) [shape = circle, draw] {}; 
\node[vertex] (G-1) at (0.0, 1) [shape = circle, draw] {}; 
\node[vertex] (G-2) at (1.5, 1) [shape = circle, draw] {}; 
\node[vertex] (G-3) at (3.0, 1) [shape = circle, draw] {}; 
\node[vertex] (G-4) at (4.5, 1) [shape = circle, draw] {}; 
\draw[] (G--4) .. controls +(-0.5, 0.5) and +(0.5, 0.5) .. (G--3); 
\draw[] (G--2) .. controls +(-0.5, 0.5) and +(0.5, 0.5) .. (G--1); 
\draw[] (G-1) .. controls +(0.5, -0.5) and +(-0.5, -0.5) .. (G-2); 
\draw[] (G-2) .. controls +(0.5, -0.5) and +(-0.5, -0.5) .. (G-3); 
\end{tikzpicture}} \otimes 
 \textsf{H}_{\varnothing} + 
 \textsf{H}_{\begin{tikzpicture}[scale = 0.5,thick, baseline={(0,-1ex/2)}] 
\tikzstyle{vertex} = [shape = circle, minimum size = 7pt, inner sep = 1pt] 
\node[vertex] (G--3) at (3.0, -1) [shape = circle, draw] {}; 
\node[vertex] (G--2) at (1.5, -1) [shape = circle, draw] {}; 
\node[vertex] (G--1) at (0.0, -1) [shape = circle, draw] {}; 
\node[vertex] (G-1) at (0.0, 1) [shape = circle, draw] {}; 
\node[vertex] (G-2) at (1.5, 1) [shape = circle, draw] {}; 
\node[vertex] (G-3) at (3.0, 1) [shape = circle, draw] {}; 
\draw[] (G--2) .. controls +(-0.5, 0.5) and +(0.5, 0.5) .. (G--1); 
\draw[] (G-1) .. controls +(0.5, -0.5) and +(-0.5, -0.5) .. (G-2); 
\draw[] (G-2) .. controls +(0.5, -0.5) and +(-0.5, -0.5) .. (G-3); 
\end{tikzpicture}} \otimes \textsf{H}_{ \begin{tikzpicture}[scale = 0.5,thick, baseline={(0,-1ex/2)}] 
\tikzstyle{vertex} = [shape = circle, minimum size = 7pt, inner sep = 1pt] 
\node[vertex] (G--1) at (0.0, -1) [shape = circle, draw] {}; 
\node[vertex] (G-1) at (0.0, 1) [shape = circle, draw] {}; 
\end{tikzpicture} } + 
 \textsf{H}_{\varnothing} \otimes \textsf{H}_{\begin{tikzpicture}[scale = 0.5,thick, baseline={(0,-1ex/2)}] 
 \tikzstyle{vertex} = [shape = circle, minimum size = 7pt, inner sep = 1pt] 
 \node[vertex] (G--4) at (4.5, -1) [shape = circle, draw] {}; 
 \node[vertex] (G--3) at (3.0, -1) [shape = circle, draw] {}; 
 \node[vertex] (G--2) at (1.5, -1) [shape = circle, draw] {}; 
\node[vertex] (G--1) at (0.0, -1) [shape = circle, draw] {}; 
\node[vertex] (G-1) at (0.0, 1) [shape = circle, draw] {}; 
\node[vertex] (G-2) at (1.5, 1) [shape = circle, draw] {}; 
\node[vertex] (G-3) at (3.0, 1) [shape = circle, draw] {}; 
\node[vertex] (G-4) at (4.5, 1) [shape = circle, draw] {}; 
\draw[] (G--4) .. controls +(-0.5, 0.5) and +(0.5, 0.5) .. (G--3); 
\draw[] (G--2) .. controls +(-0.5, 0.5) and +(0.5, 0.5) .. (G--1); 
\draw[] (G-1) .. controls +(0.5, -0.5) and +(-0.5, -0.5) .. (G-2); 
\draw[] (G-2) .. controls +(0.5, -0.5) and +(-0.5, -0.5) .. (G-3); 
\end{tikzpicture}}. 
\end{align*}
\end{example}

 In our constructing a bialgebra structure for $\textsf{ParSym}$, 
 it would be appropriate to prove the coassociativity of the operation given in Definition \ref{DeltaParSym}. 

\begin{lemma}\label{bulletassociative}
 The operation $\bullet$ is associative. 
\end{lemma}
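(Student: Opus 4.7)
The plan is to unwind the definition of $\bullet$ in both $(\pi \bullet \rho) \bullet \sigma$ and $\pi \bullet (\rho \bullet \sigma)$ and verify that both expressions yield the same underlying graph, hence the same partition diagram. The edge cases where at least one of $\pi$, $\rho$, $\sigma$ equals $\varnothing$ are handled immediately by the conventions $\varnothing \bullet \tau = \tau \bullet \varnothing = \tau$, which reduce associativity to a triviality in each case, so I may assume throughout that all three diagrams are nonempty.

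For nonempty $\pi$, $\rho$, $\sigma$, I would first observe that as a graph the diagram $\pi \bullet \rho$ has the same node set and node layout as $\pi \otimes \rho$; in particular, the bottom-left node of $\pi \bullet \rho$ coincides with the bottom-left node of $\pi$, and the bottom-right node of $\pi \bullet \rho$ coincides with the bottom-right node of $\rho$. Applying Definition~\ref{bulletdefinition} once more then shows that $(\pi \bullet \rho) \bullet \sigma$ is represented by the horizontal concatenation $\pi \otimes \rho \otimes \sigma$ equipped with two extra edges: one joining the bottom-right node of $\pi$ to the bottom-left node of $\rho$, and one joining the bottom-right node of $\rho$ to the bottom-left node of $\sigma$. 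A symmetric unwinding of $\pi \bullet (\rho \bullet \sigma)$ yields the same underlying concatenation with the identical pair of additional edges.

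Since two graphs on $\{1,\ldots,k,1',\ldots,k'\}$ determine the same partition diagram precisely when their connected components agree, the two representatives above define the same element of $A_{k}$, giving $(\pi \bullet \rho) \bullet \sigma = \pi \bullet (\rho \bullet \sigma)$. The only real obstacle is bookkeeping: one must be careful to identify correctly the bottom-left and bottom-right nodes of an intermediate diagram $\pi \bullet \rho$ so that the two inserted edges in each association are seen to be the same; there is no genuine combinatorial subtlety beyond this.
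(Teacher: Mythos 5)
Your proposal is correct and follows essentially the same route as the paper: both arguments unwind the two associations into the three diagrams placed side by side with the same two bottom joining edges, and conclude that the resulting configurations coincide. Your explicit treatment of the empty-diagram conventions and of the identification of the bottom-left/bottom-right nodes of the intermediate diagram is slightly more careful than the paper's one-sentence argument, but the underlying idea is identical.
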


\begin{proof}
 For diagrams $\pi^{(1)}$, $\pi^{(2)}$, and $\pi^{(3)}$,  by placing $\pi^{(3)}$ to the right of $\pi^{(2)}$ and joining these diagrams with a bottom edge 
 according  to the definition of $\bullet$, and by then placing $\pi^{(1)}$ to the left of the resultant configuration and again adding a joining edge at the 
 bottom according to $\bullet$,  we obtain the product $\pi^{(1)} \bullet (\pi^{(2)} \bullet \pi^{(3)})$,  which is the same as the configuartion obtained by 
 taking $\pi^{(1)}$ and placing $\pi^{(2)}$ to the right of it and joining the diagrams according to $\bullet$, and by then placing $\pi^{(3)}$ to the right and 
 again adding a bottom edge according to $\bullet$. 
\end{proof}

\begin{theorem}
 The operation $\Delta$ is coassociative. 
\end{theorem}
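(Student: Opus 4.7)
The plan is to show that $(\Delta \otimes \id) \circ \Delta$ and $(\id \otimes \Delta) \circ \Delta$ agree on all of $\textsf{ParSym}$. Both composites are algebra homomorphisms, since $\Delta$ is compatible with the multiplicative operation as in \eqref{Deltacompatible}, so it suffices to check the identity on the $\otimes$-irreducible generators $\textsf{H}_{\pi}$; the case of the unit generator $\textsf{H}_{\varnothing}$ is immediate from the convention $\Delta \textsf{H}_{\varnothing} = \textsf{H}_{\varnothing} \otimes \textsf{H}_{\varnothing}$.

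For an $\otimes$-irreducible diagram $\pi$, I would unfold Definition \ref{DeltaParSym} twice. Writing $\mathcal{I}$ for the set of $\otimes$-irreducible partition diagrams (which vacuously contains $\varnothing$), one obtains
\begin{align*}
 (\Delta \otimes \id)\Delta \textsf{H}_{\pi}
 &= \sum_{\substack{H_{1}, H_{2}, G_{2} \in \mathcal{I} \\ (H_{1} \bullet H_{2}) \bullet G_{2} = \pi}}
 \textsf{H}_{H_{1}} \otimes \textsf{H}_{H_{2}} \otimes \textsf{H}_{G_{2}}, \\
 (\id \otimes \Delta)\Delta \textsf{H}_{\pi}
 &= \sum_{\substack{G_{1}, K_{1}, K_{2} \in \mathcal{I} \\ G_{1} \bullet (K_{1} \bullet K_{2}) = \pi}}
 \textsf{H}_{G_{1}} \otimes \textsf{H}_{K_{1}} \otimes \textsf{H}_{K_{2}}.
\end{align*}
By Lemma \ref{bulletassociative}, the two bracketings agree, so the two index sets coincide and the sums match term by term.

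One subtle point to verify is that unfolding the inner coproduct $\Delta \textsf{H}_{G_{1}}$ loses no terms and creates no extras; this requires that $H_{1} \bullet H_{2} \in \mathcal{I}$ whenever $H_{1}, H_{2} \in \mathcal{I}$. If either factor is $\varnothing$ the claim follows from the conventions in Definition \ref{bulletdefinition}. If both factors are nonempty irreducible diagrams, the edge that $\bullet$ adjoins between the bottom-right node of $H_{1}$ and the bottom-left node of $H_{2}$ blocks any separating line from passing between them, while no separating line can lie strictly within either factor, by irreducibility; hence $H_{1} \bullet H_{2}$ is irreducible. A symmetric observation applies to $K_{1} \bullet K_{2}$.

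The main obstacle I anticipate is precisely this bookkeeping: making sure every triple $(H_{1}, H_{2}, H_{3}) \in \mathcal{I}^{3}$ with $\bullet$-product equal to $\pi$ is enumerated exactly once on each side, with the boundary cases where one or more $H_{i}$ equals $\varnothing$ handled consistently. Once this is in place, Lemma \ref{bulletassociative} identifies the two triple sums term by term, establishing coassociativity on the generators, whence it extends to all of $\textsf{ParSym}$ by the algebra-homomorphism argument.
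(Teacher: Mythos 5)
Your proposal is correct and follows essentially the same route as the paper's own proof: unfold $\Delta$ twice into a triple sum indexed by $\otimes$-irreducible diagrams whose $\bullet$-product (under either bracketing) equals $\pi$, and invoke Lemma \ref{bulletassociative} to identify the two index sets, the identity then extending to all of $\textsf{ParSym}$ multiplicatively. The one point you make explicitly that the paper leaves tacit in this proof is that the index sets only match because a $\bullet$-product of $\otimes$-irreducible diagrams is again $\otimes$-irreducible; this is exactly the content of Theorem \ref{2702730791173171P7M1A}, which the paper establishes separately (by the same isthmus argument you sketch), so your extra care is justified rather than a deviation.
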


\begin{proof}
 Starting with \eqref{displayDelta}, we proceed to apply $\text{id} \otimes \Delta$, writing 
\begin{equation}\label{display77D7e77lta}
\left( \text{id} \otimes \Delta \right) \left( \Delta \textsf{H}_{\pi} \right) = 
 \sum_{G_{1}} \textsf{H}_{G_{1}} \otimes \left( \sum_{G_{3}, G_{4}} \textsf{H}_{G_{3}} \otimes \textsf{H}_{G_{4}} \right),
\end{equation}
 where $G_{1}$ is as before, and where the inner sum, in accordance with 
 Definition \ref{DeltaParSym}, is over all 
 $\otimes$-irreducible partition diagrams $G_{3}$ and $G_{4}$
 such that $\textsf{H}_{G_{3}} \bullet \textsf{H}_{G_{4}} = \textsf{H}_{G_{2}}$, letting $G_{2}$ be as before. 
 So, we may rewrite \eqref{display77D7e77lta} so that 
\begin{equation}\label{dispqlqaqyq7q7Dqq7qe77lta}
 \left( \text{id} \otimes \Delta \right) \left( \Delta \textsf{H}_{\pi} \right) = 
 \sum_{G_{1}, G_{3}, G_{4}} \textsf{H}_{G_{1}} \otimes \textsf{H}_{G_{3}} \otimes \textsf{H}_{G_{4}}, 
\end{equation}
 where the sum in \eqref{dispqlqaqyq7q7Dqq7qe77lta} is over all 
 $\otimes$-irreducible partition diagrams $G_{1}$, $G_{3}$, and $G_{4}$
 such that $\textsf{H}_{G_{1}} \bullet (\textsf{H}_{G_{3}} \bullet \textsf{H}_{G_{4}} )$ 
 equals $\pi$. By Lemma \ref{bulletassociative},  the sum in \eqref{dispqlqaqyq7q7Dqq7qe77lta} 
 is the same as the corresponding sum over 
 all $\otimes$-irreducible partition diagrams $G_{1}$, $G_{3}$, and $G_{4}$
 such that $(\textsf{H}_{G_{1}} \bullet \textsf{H}_{G_{3}} ) \bullet \textsf{H}_{G_{4}}$. 
 So, by a symmetric argument, relative to our derivation of \eqref{dispqlqaqyq7q7Dqq7qe77lta}, 
 by applying $\Delta \otimes \text{id}$ to $\Delta \textsf{H}_{\pi}$, 
 we obtain the same sum in \eqref{dispqlqaqyq7q7Dqq7qe77lta}. 
\end{proof}

 By defining a counit morphism $\varepsilon\colon \textsf{ParSym} \to \mathbbm{k}$ so that $\varepsilon(\textsf{H}_{\varnothing}) = 
 1_{\mathbbm{k}}$ and $\varepsilon(\textsf{H}_{\pi})$ $=$ $ 0$ for a nonempty partition diagram $\pi$, 
   from the coassociativity of $\Delta$, we may obtain  a    bialgebra structure on $\textsf{ParSym}$.  

 The following result concerning the relationship between $\bullet$ and $\otimes$ will be useful in our 
 constructing an antipode map for $\textsf{ParSym}$. 

\begin{theorem}\label{2702730791173171P7M1A}
 Let $\rho$ and $\pi$ be two partition diagrams. Then $\rho \bullet \pi$ is $\otimes$-irreducible if and only if 
 $\rho$ and $\pi$ are $\otimes$-irreducible. 
\end{theorem}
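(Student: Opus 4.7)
The plan is to analyze separating vertical lines directly. Recall from the discussion preceding Lemma \ref{lemmauniqueotimes} that a partition diagram of order $k$ is $\otimes$-reducible if and only if it admits a \emph{separating line}, namely a vertical line between some columns $j$ and $j+1$ (spanning both the top and bottom rows) crossed by no edge. The essential observation driving both directions is that, as a drawing, $\rho \bullet \pi$ coincides with the horizontal concatenation $\rho \otimes \pi$ together with exactly one additional edge, namely the bottom edge joining the rightmost node of $\rho$ to the leftmost node of $\pi$; by Remark \ref{realizedthiswas} this edge may be drawn inside the rectangular region bounded by the top and bottom rows, so it introduces no spurious crossings with other candidate separating lines.

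For the forward direction I will argue by contrapositive. Suppose $\rho = \rho^{(1)} \otimes \rho^{(2)}$ is a nontrivial decomposition. The separating line witnessing it lies strictly to the left of every node of $\rho^{(2)}$ and of $\pi$, so in particular it lies strictly to the left of both endpoints of the added bottom edge, which therefore does not cross it; since no edge of $\pi$ crosses it either, this line remains separating in $\rho \bullet \pi$, yielding the factorization $\rho \bullet \pi = \rho^{(1)} \otimes (\rho^{(2)} \bullet \pi)$ and hence $\otimes$-reducibility. The symmetric argument, starting from a nontrivial decomposition $\pi = \pi^{(1)} \otimes \pi^{(2)}$, handles the remaining case.

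For the reverse direction, assume $\rho$ and $\pi$ are both $\otimes$-irreducible, of orders $k_1$ and $k_2$, and consider any candidate separating line of $\rho \bullet \pi$ placed between columns $j$ and $j+1$. I split into three cases. If $j < k_1$, then no edge of $\pi$ crosses this line and neither does the added bottom edge, so any crossing edge must be an edge of $\rho$, giving a separating line of $\rho$ and contradicting its irreducibility. The case $j > k_1$ is handled symmetrically using the irreducibility of $\pi$. In the boundary case $j = k_1$, the added bottom edge runs from column $k_1$ to column $k_1 + 1$, so it crosses this line and rules it out. Hence $\rho \bullet \pi$ admits no separating line, i.e., it is $\otimes$-irreducible.

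The argument is purely combinatorial and geometric, and I do not anticipate any real obstacle: the only point requiring explicit attention is confirming, in the forward direction, that the new edge lies entirely to the right of a separating line inherited from $\rho$ (or entirely to the left of one inherited from $\pi$), which is immediate from the positions of its two endpoints relative to the chosen line.
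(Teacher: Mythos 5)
Your proposal is correct and follows essentially the same route as the paper's proof: both directions are argued via the separating-line characterization of $\otimes$-reducibility, with the key observations that a separation internal to $\rho$ or $\pi$ survives in $\rho \bullet \pi$ (since the added bottom edge lies entirely on one side of it) and that the added edge itself, being an isthmus joining the two halves, blocks any separation at the junction. Your explicit three-case split on the position $j$ of a candidate separating line is slightly more detailed than the paper's phrasing, but the underlying argument is the same.
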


\begin{proof}
 $(\Longrightarrow)$ Suppose that $\rho \bullet \pi$ is $\otimes$-irreducible. 
 It could then not be the case that $\rho$ has a separation, since this would result in a separation in $\rho \bullet \pi$, 
 and the same applies with respect to $\pi$. 

 \ 

\noindent $(\Longleftarrow)$ Suppose that $\rho$ and $\pi$ are $\otimes$-irreducible.   By placing $\pi$ to the right of $\rho$, and then forming an isthmus     
   joining the bottom right node of $\rho$ and the bottom left node of $\pi$,    any graph equivalent to $\rho \bullet \pi$ could not have a separating line   
  between     the nodes corresponding to $\rho$ and the nodes corresponding to $\pi$.     So, there are no separations in $\rho$ and no separations in $\pi$,    
   and there could not be a separation between   the nodes corresponding to $\rho$ and the nodes corresponding to $\pi$,    so we may conclude that $\rho   
  \bullet \pi$ is $\otimes$-irreducible.   
\end{proof}

 While the result highlighted as Theorem \ref{graphtheorem} below is to be used in a key way in our construction of a projection morphism from $ 
 \textsf{ParSym}$ to $\textsf{NSym}$, this result is of interest in its own right, since it gives us a way of formalizing how partition diagrams can be 
 ``broken up'' into components analogous to the entries of an integer composition, in a way suggested by \eqref{bulletwithcirc} and 
 \eqref{path314}. We may define $\bullet$-irreducibility by direct analogy with Definition \ref{oirreducible}. 
 Our proof of the following result is also useful in relation to the material on diagram subalgebras in Section \ref{sectionDiagramHopf} below. 

\begin{theorem}\label{graphtheorem}
 For a diagram $\pi$ that is nonempty, there is a unique decomposition 
\begin{equation}\label{piinlemma}
 \pi = \theta^{(1)} \bullet \theta^{(2)} \bullet \cdots \bullet \theta^{(m)} 
\end{equation}
 into diagrams that are $\bullet$-irreducible, and where $m = m(\pi)$ is a fixed statistic depending on $\pi$. 
\end{theorem}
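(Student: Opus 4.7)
My plan is to obtain existence by a straightforward induction on the order of $\pi$ and to prove uniqueness by showing that the set of possible ``$\bullet$-cut positions'' is an intrinsic invariant of $\pi$, so that every $\bullet$-irreducible factorization must cut at exactly this set. The first step is a characterization lemma: for $\pi$ of order $k$, a position $j \in \{1, \ldots, k-1\}$ admits a factorization $\pi = A \bullet B$ with $A$ of order $j$ if and only if (i) $j'$ and $(j+1)'$ belong to the same block of $\pi$ and (ii) every other block of $\pi$ is contained entirely in $\{1, \ldots, j, 1', \ldots, j'\}$ or entirely in $\{j+1, \ldots, k, (j+1)', \ldots, k'\}$. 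The forward direction is immediate from Definition~\ref{bulletdefinition}: the isthmus edge forces (i), and the disjointness of the vertex sets of the two factors forces (ii). For the converse, one recovers $A$ and $B$ as the restrictions of the block structure of $\pi$ to the two sides, splitting the straddling block accordingly, and a direct check confirms $A \bullet B = \pi$.

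Granted this characterization, existence follows by induction on the order: if $\pi$ is $\bullet$-irreducible then $m = 1$ and $\theta^{(1)} = \pi$; otherwise one picks any $\bullet$-cut point to write $\pi = A \bullet B$ with both factors strictly shorter, applies the inductive hypothesis to $A$ and to $B$ separately, and concatenates the resulting $\bullet$-irreducible factorizations using the associativity of $\bullet$ established in Lemma~\ref{bulletassociative}.

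The heart of the argument, and the step I expect to be the main obstacle, is a \emph{localization} lemma for uniqueness: whenever $\pi = A \bullet B$ with $A$ of order $j$, the $\bullet$-cut points of $A$ are exactly the $\bullet$-cut points of $\pi$ that lie in $\{1, \ldots, j-1\}$, and analogously for $B$ after shifting by $j$. Verifying this requires a careful case analysis on whether the block of $\pi$ straddling a smaller cut position $j_0 < j$ coincides with the block straddling the larger cut at $j$, checking in each case that conditions (i) and (ii) at $j_0$ transfer between $\pi$ and $A$. With the localization lemma in hand, uniqueness follows by induction on $k$: given two $\bullet$-irreducible factorizations $\pi = \theta^{(1)} \bullet \cdots \bullet \theta^{(m)} = \phi^{(1)} \bullet \cdots \bullet \phi^{(n)}$, let $j_1$ and $i_1$ denote the orders of $\theta^{(1)}$ and $\phi^{(1)}$; the inequality $j_1 < i_1$ would, by localization, make $j_1$ a $\bullet$-cut point of $\phi^{(1)}$, contradicting its $\bullet$-irreducibility, and symmetrically for $j_1 > i_1$. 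Hence $j_1 = i_1$, which by the converse part of the characterization lemma forces $\theta^{(1)} = \phi^{(1)}$, and applying the induction hypothesis to the remaining cofactor completes the proof while simultaneously showing that $m = m(\pi)$ is a well-defined statistic of $\pi$.
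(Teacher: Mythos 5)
Your argument is correct, but it takes a genuinely different route from the paper's. The paper works with the graph representation throughout: for each pair $(i',(i+1)')$ of consecutive bottom vertices lying in a common block, it describes an edge-addition/removal procedure which, whenever it can be carried out while preserving the components of $\pi$, turns $\{i',(i+1)'\}$ into an isthmus realizing a $\bullet$-cut; it then argues that applications of this procedure at distinct positions do not interfere with one another, so that performing it at every admissible position yields the decomposition, and uniqueness is obtained by comparing the resulting isthmuses across any two factorizations. You instead characterize the admissible cut positions purely in terms of the block structure (your conditions (i) and (ii)), which makes it manifest that the set of cuts is an intrinsic invariant of the set partition rather than of any particular drawing, and your localization lemma is the rigorous counterpart of the paper's informal independence claim; uniqueness then follows by a standard unique-factorization induction comparing the orders of the leading factors, using the fact that a cut position determines both factors. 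Your route is cleaner and easier to verify, avoiding the drawing conventions of Remark \ref{realizedthiswas} and the bookkeeping about edges inside rectangles; the paper's route has the compensating advantage of being a concrete procedure on graphs that is reused in Section \ref{sectionDiagramHopf} to show that planarity and block-size constraints pass to the $\bullet$-factors. Two small points remain to be filled in: the case analysis in your localization lemma (the block straddling the inner cut either equals or is disjoint from the block straddling the outer cut, and conditions (i) and (ii) transfer between $\pi$ and the factor in both cases) should be written out, and the base case of the uniqueness induction, where $\bullet$-irreducibility of $\pi$ forces both factorizations to be trivial, deserves a sentence.
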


\begin{proof}
 As above, we adopt the notational convention indicated in Remark \ref{realizedthiswas}. Let $i'$ and $(i+1)'$ be consecutive vertices in the bottom row of 
 $\pi$, and suppose that these bottom nodes are in the same connected component of $\pi$. For a fixed pair $(i', (i+1)')$ of bottom vertices of the 
 specified form, if it is possible to apply a construction of the following form, while maintaining 
 the connected components of $\pi$, then we let this construction be applied: If $i'$ and $(i+1)'$ are 
 non-adjacent, then we add the edge $\{ i', (i+1)' \}$, and if $i$ and $i'$ are in the same component but non-adjacent, 
 then we add the edge $\{ i, i' \}$, 
 and if $i+1$ and $(i+1)'$ are in the same component but non-adjacent, 
 then we add the edge $\{ i+1, (i+1)' \}$, 
 and, if possible, we then remove or rewrite edges so that 
 $i$ and $i+1$ are not 
 adjacent and so that no edge partly or fully appears \emph{strictly} within the area given by 
 the rectangle formed by the vertices in $\{ i, i+1, i', (i+1)' \}$, 
 and this does not include the unique edge incident with $i'$ and $(i+1)'$ and does not include the possibility of an edge incident with $i$ and $i'$ 
 and does not include the possibility of an edge incident with $i+1$ and $(i+1)'$. For the same fixed pair $(i', (i+1)')$ satisfying all of the specified conditions, 
 the edge $\{ i', (i+1)' \}$ is an isthmus, by construction. 
 By this construction, any edge $e$ removed is such that $e$ is either incident with both $i$ 
 and $i+1$ or such that $e$ partly or fully touches the strict rectangular area specified. 
 
 So, for a pair $(j', (j+1)') \neq (i', (i+1)')$ of bottom nodes satisfying the same conditions as before, 
 we let it be understood that we have already formed, as above, the isthmus $\{ i', (i+1)' \}$, in such a way so that 
 $i$ and $i+1$ are not adjacent, and in such a way so that 
 the strict rectangular region corresponding to $\{ i, i+1, i', (i+1)' \}$ is ``empty'' in the sense specified as above. 
 By then, if necessary, and according to the above procedure, 
 adding the edge $\{ j', (j+1)' \}$ and/or the edge $\{ j, j' \}$ and/or the edge $\{ j+1, (j+1)' \}$, 
 and, if necessary, removing 
 a possible edge $\{ j, j+1 \}$ or any edges partly or wholly touching the strict rectangular 
 area corresponding to $\{ j, j+1, j', (j+1)' \}$, 
 this would have no effect on the following, even for the ``borderline'' or extremal cases 
 whereby $i+1 = j$ or $j + 1 = i$: 
 (i) The edge $\{ i', (i+1)' \}$ being an isthmus; (ii) 
 The vertices $i$ and $i+1$ being non-adjacent; (iii) The vertices $i$ and $i'$ being adjacent if these vertices
 happen to be in the same component; (iv) The vertices $i+1$ and $(i+1)'$ being adjacent if these vertices happen to 
 be in the same component; and (v) The emptiness 
 of the strict rectangular region corresponding to $\{ i, i+1, i', (i+1)' \}$. 

 So, after applying our construction with respect to $(i', (i+1)')$, 
 we have shown that the application of our construction to another pair $(j', (j+1)')$ satisfying the same conditions 
 would have no effect on how our construction was initially applied to form an isthmus $\{ i', (i+1)' \}$ 
 such that the listed properties hold. 
 So, inductively, we may apply our construction over and over, with each such application 
 having no effect on the previous applications. 
 So, we repeat our edge addition/removal process wherever possible to the nonempty diagram $\pi$, and, from such successive applications, 
 we may express $\pi$ as in \eqref{piinlemma}, 
 where each application of the $\bullet$ operation in \eqref{piinlemma} has the effect 
 of adding an edge of the form $\{ k', (k+1)' \}$, for a pair $(k', (k+1)')$ of bottom nodes satisfying the above specified conditions in $\pi$, 
 and where each factor on the right of \eqref{piinlemma} is a nonempty diagram. Each factor in \eqref{piinlemma} is irreducible in the sense that it 
 cannot be the case that $\theta^{(i)} = \theta^{(j_{1})} \bullet \theta^{(j_{2})}$ for nonempty diagrams $\theta^{(j_{1})}$ and $\theta^{(j_{2})}$, 
 because, otherwise, the diagrams $\theta^{(j_{1})}$ and $ \theta^{(j_{2})}$ would have already been included in the above $\bullet$-decomposition, 
 since the edge addition/removal process described above was applied wherever possible and since 
 each application of this process is independent of any previous such applications. 
 Given a decomposition 
\begin{equation}\label{pigammagamma}
 \pi = \gamma^{(1)} \bullet \gamma^{(2)} \bullet \cdots \bullet \gamma^{(m)}, 
\end{equation}
 for $\bullet$-irreducible and nonempty diagrams of the form $\gamma^{(i)}$, 
 we can show, as follows, that the right-hand sides of 
 \eqref{piinlemma} and \eqref{pigammagamma} agree: 
 Again, the application of $\bullet$ has the effect of adding an edge $\{ i', (i+1)' \}$ 
 such that $(i', (i+1)')$ satisfies the above indicated conditions, 
 so that we may, inductively, compare 
 such edges corresponding to \eqref{piinlemma} and \eqref{pigammagamma}. 
 So, there is a unique $\bullet$-decomposition of $\pi$ 
 of the desired form. 
\end{proof}

 Apart from our applying Theorem \ref{graphtheorem} 
 to prove Theorem \ref{theoremproj}, Theorem \ref{graphtheorem} is also of interest in 
 terms of how it can be used to formalize and give light to how the $\bullet$ operation in Definition \ref{bulletdefinition} is appropriate and natural as a lifting 
 of near-concatenation. For example, one might think that there could be 
 advantages of using a variant of $\bullet$ given by, say, joining top and bottom edges, letting $\odot$ be such that $\pi \odot \rho$ is the graph 
 obtained by adding an edge incident with the top left node of $\pi$ and the top right node of $\rho$, and by adding an edge incident with the bottom 
 left node of $\pi$ and the bottom right node of $\rho$. However, this operation $\odot$ would not provide the uniqueness property indicated in 
 Theorem \ref{graphtheorem}, as shown in Example \ref{frompen} below. 

\begin{example}\label{frompen}
 The uniqueness property in Theorem \ref{graphtheorem} may be illustrated by evaluating the $\bullet$-products 
 $$ \begin{tikzpicture}[scale = 0.5,thick, baseline={(0,-1ex/2)}] 
\tikzstyle{vertex} = [shape = circle, minimum size = 7pt, inner sep = 1pt] 
\node[vertex] (G--1) at (0.0, -1) [shape = circle, draw] {}; 
\node[vertex] (G-1) at (0.0, 1) [shape = circle, draw] {}; 
\draw[] (G-1) .. controls +(0, -1) and +(0, 1) .. (G--1); 
\end{tikzpicture} \ \bullet \ \begin{tikzpicture}[scale = 0.5,thick, baseline={(0,-1ex/2)}] 
\tikzstyle{vertex} = [shape = circle, minimum size = 7pt, inner sep = 1pt] 
\node[vertex] (G--1) at (0.0, -1) [shape = circle, draw] {}; 
\node[vertex] (G-1) at (0.0, 1) [shape = circle, draw] {}; 
\end{tikzpicture}
 = \begin{tikzpicture}[scale = 0.5,thick, baseline={(0,-1ex/2)}] 
\tikzstyle{vertex} = [shape = circle, minimum size = 7pt, inner sep = 1pt] 
\node[vertex] (G--2) at (1.5, -1) [shape = circle, draw] {}; 
\node[vertex] (G--1) at (0.0, -1) [shape = circle, draw] {}; 
\node[vertex] (G-1) at (0.0, 1) [shape = circle, draw] {}; 
\node[vertex] (G-2) at (1.5, 1) [shape = circle, draw] {}; 
\draw[] (G-1) .. controls +(0.75, -1) and +(-0.75, 1) .. (G--2); 
\draw[] (G--2) .. controls +(-0.5, 0.5) and +(0.5, 0.5) .. (G--1); 
\draw[] (G--1) .. controls +(0, 1) and +(0, -1) .. (G-1); 
\end{tikzpicture} \ \ \ \text{and} \ \ \ 
 \begin{tikzpicture}[scale = 0.5,thick, baseline={(0,-1ex/2)}] 
\tikzstyle{vertex} = [shape = circle, minimum size = 7pt, inner sep = 1pt] 
\node[vertex] (G--1) at (0.0, -1) [shape = circle, draw] {}; 
\node[vertex] (G-1) at (0.0, 1) [shape = circle, draw] {}; 
\draw[] (G-1) .. controls +(0, -1) and +(0, 1) .. (G--1); 
\end{tikzpicture} \ \bullet \ \begin{tikzpicture}[scale = 0.5,thick, baseline={(0,-1ex/2)}] 
\tikzstyle{vertex} = [shape = circle, minimum size = 7pt, inner sep = 1pt] 
\node[vertex] (G--1) at (0.0, -1) [shape = circle, draw] {}; 
\node[vertex] (G-1) at (0.0, 1) [shape = circle, draw] {}; 
\draw[] (G-1) .. controls +(0, -1) and +(0, 1) .. (G--1); 
\end{tikzpicture} = \begin{tikzpicture}[scale = 0.5,thick, baseline={(0,-1ex/2)}] 
\tikzstyle{vertex} = [shape = circle, minimum size = 7pt, inner sep = 1pt] 
\node[vertex] (G--2) at (1.5, -1) [shape = circle, draw] {}; 
\node[vertex] (G--1) at (0.0, -1) [shape = circle, draw] {}; 
\node[vertex] (G-1) at (0.0, 1) [shape = circle, draw] {}; 
\node[vertex] (G-2) at (1.5, 1) [shape = circle, draw] {}; 
\draw[] (G-1) .. controls +(0.5, -0.5) and +(-0.5, -0.5) .. (G-2); 
\draw[] (G-2) .. controls +(0, -1) and +(0, 1) .. (G--2); 
\draw[] (G--2) .. controls +(-0.5, 0.5) and +(0.5, 0.5) .. (G--1); 
\draw[] (G--1) .. controls +(0, 1) and +(0, -1) .. (G-1); 
\end{tikzpicture}, $$
 in contrast to the non-uniqueness suggested by the $\odot$-products shown below: 
 $$ \begin{tikzpicture}[scale = 0.5,thick, baseline={(0,-1ex/2)}] 
\tikzstyle{vertex} = [shape = circle, minimum size = 7pt, inner sep = 1pt] 
\node[vertex] (G--1) at (0.0, -1) [shape = circle, draw] {}; 
\node[vertex] (G-1) at (0.0, 1) [shape = circle, draw] {}; 
\draw[] (G-1) .. controls +(0, -1) and +(0, 1) .. (G--1); 
\end{tikzpicture} \ \odot \ \begin{tikzpicture}[scale = 0.5,thick, baseline={(0,-1ex/2)}] 
\tikzstyle{vertex} = [shape = circle, minimum size = 7pt, inner sep = 1pt] 
\node[vertex] (G--1) at (0.0, -1) [shape = circle, draw] {}; 
\node[vertex] (G-1) at (0.0, 1) [shape = circle, draw] {}; 
\end{tikzpicture}
 = \begin{tikzpicture}[scale = 0.5,thick, baseline={(0,-1ex/2)}] 
\tikzstyle{vertex} = [shape = circle, minimum size = 7pt, inner sep = 1pt] 
\node[vertex] (G--2) at (1.5, -1) [shape = circle, draw] {}; 
\node[vertex] (G--1) at (0.0, -1) [shape = circle, draw] {}; 
\node[vertex] (G-1) at (0.0, 1) [shape = circle, draw] {}; 
\node[vertex] (G-2) at (1.5, 1) [shape = circle, draw] {}; 
\draw[] (G-1) .. controls +(0.5, -0.5) and +(-0.5, -0.5) .. (G-2); 
\draw[] (G-2) .. controls +(0, -1) and +(0, 1) .. (G--2); 
\draw[] (G--2) .. controls +(-0.5, 0.5) and +(0.5, 0.5) .. (G--1); 
\draw[] (G--1) .. controls +(0, 1) and +(0, -1) .. (G-1); 
\end{tikzpicture} \ \ \ \text{and} \ \ \ 
 \begin{tikzpicture}[scale = 0.5,thick, baseline={(0,-1ex/2)}] 
\tikzstyle{vertex} = [shape = circle, minimum size = 7pt, inner sep = 1pt] 
\node[vertex] (G--1) at (0.0, -1) [shape = circle, draw] {}; 
\node[vertex] (G-1) at (0.0, 1) [shape = circle, draw] {}; 
\draw[] (G-1) .. controls +(0, -1) and +(0, 1) .. (G--1); 
\end{tikzpicture} \ \odot \ \begin{tikzpicture}[scale = 0.5,thick, baseline={(0,-1ex/2)}] 
\tikzstyle{vertex} = [shape = circle, minimum size = 7pt, inner sep = 1pt] 
\node[vertex] (G--1) at (0.0, -1) [shape = circle, draw] {}; 
\node[vertex] (G-1) at (0.0, 1) [shape = circle, draw] {}; 
\draw[] (G-1) .. controls +(0, -1) and +(0, 1) .. (G--1); 
\end{tikzpicture} = \begin{tikzpicture}[scale = 0.5,thick, baseline={(0,-1ex/2)}] 
\tikzstyle{vertex} = [shape = circle, minimum size = 7pt, inner sep = 1pt] 
\node[vertex] (G--2) at (1.5, -1) [shape = circle, draw] {}; 
\node[vertex] (G--1) at (0.0, -1) [shape = circle, draw] {}; 
\node[vertex] (G-1) at (0.0, 1) [shape = circle, draw] {}; 
\node[vertex] (G-2) at (1.5, 1) [shape = circle, draw] {}; 
\draw[] (G-1) .. controls +(0.5, -0.5) and +(-0.5, -0.5) .. (G-2); 
\draw[] (G-2) .. controls +(0, -1) and +(0, 1) .. (G--2); 
\draw[] (G--2) .. controls +(-0.5, 0.5) and +(0.5, 0.5) .. (G--1); 
\draw[] (G--1) .. controls +(0, 1) and +(0, -1) .. (G-1); 
\end{tikzpicture}. $$
\end{example}

 Theorem \ref{graphtheorem} is also closely related to our construction of an antipode for $\textsf{ParSym}$ and our construction of an analogue of the 
 $E$-basis of $\textsf{NSym}$, as in Section \ref{subsectionES}. 
 
\subsection{An elementary-like basis and an antipode}\label{subsectionES}
 The convolution identity such that 
\begin{equation}\label{convolutionSym}
 \sum_{i=0}^{n} (-1)^{i} e_{i} h_{n-i} = \delta_{n,0} 
\end{equation}
 is of basic importance in the theory of symmetric functions, letting commutative $h$-generators be as in \eqref{displaySym}, and where elementary 
 generators for $\textsf{Sym}$ bay be recursively defined according to \eqref{convolutionSym}. The elementary basis $\{ E_{\alpha} : \alpha \in 
 \mathcal{C} \}$ of $\textsf{NSym}$ may be defined by direct analogy with \eqref{convolutionSym}, by setting 
\begin{equation}\label{Erecursion}
 E_{n} = \sum_{i = 1}^{n} (-1)^{i+1} H_{i} E_{n - i}, 
\end{equation}
 with $E_{\alpha} = E_{\alpha_{1}} E_{\alpha_{2}} \cdots E_{\alpha_{\ell(\alpha)}}$. Successive applications of 
 \eqref{Erecursion} lead to 
\begin{equation}\label{EtoHNSym}
 E_{n} = \sum_{\beta \vDash n} (-1)^{\ell(\beta) +n} H_{\beta}, 
\end{equation}
 and we intend to determine an analogue, for $\textsf{ParSym}$, of \eqref{EtoHNSym}, 
 to define and evaluate an antipode antihomomorphism for $\textsf{ParSym}$. 

 We may define the Hopf algebra morphism $S_{\textsf{NSym}}\colon \textsf{NSym} \to \textsf{NSym}$ so that 
\begin{equation}\label{SNSymH}
 S_{\textsf{NSym}}(H_{n}) = (-1)^{n} E_{n}.
\end{equation}
 So, by applying the morphism $\text{id} \otimes S_{\textsf{NSym}}$ 
 to both sides of \eqref{DeltaNSym}, 
 and then applying the multiplicative operation of $\textsf{NSym}$, 
 as a morphism from $\textsf{NSym} \otimes \textsf{NSym} \to \textsf{NSym}$, we obtain, 
 in view of \eqref{Erecursion}, that 
\begin{align*}
 \sum_{i = 0}^{n} (-1)^{n-i} H_{i} E_{n-i} 
 & = \begin{cases} 
 1_{\textsf{NSym}} & \text{if $n = 0$,} \\ 
 0 & \text{otherwise,} 
 \end{cases} \\ 
 & = \eta_{\textsf{NSym}} \left( \varepsilon_{\textsf{NSym}}\left( H_{n} \right) \right), 
\end{align*}
 and a symmetric argument may be used to complete a proof that the following diagram commutes, 
 for $\mathcal{H} = \textsf{NSym}$. 
\begin{equation}\label{categoryHopf}
\begin{tikzcd}[row sep=3.6em,column sep=1em]
& \mathcal{H}\otimes \mathcal{H} \arrow[rr,"S\otimes\id"] && \mathcal{H}\otimes \mathcal{H} \arrow[dr,"\nabla"] \\
\mathcal{H} \arrow[ur,"\Delta"] \arrow[rr,"\varepsilon"] \arrow[dr,"\Delta"'] && \mathbbm{k} \arrow[rr,"\eta"] && \mathcal{H} \\
& \mathcal{H}\otimes \mathcal{H} \arrow[rr,"\id\otimes S"'] && \mathcal{H}\otimes \mathcal{H} \arrow[ur,"\nabla"']
\end{tikzcd}
\end{equation}

 The antipode antihomomorphism for $\textsf{NSym}$ may be defined in an equivalent way so that 
\begin{equation}\label{SHn}
 S(H_{n}) = \sum_{\alpha} (-1)^{\ell(\alpha)} H_{\alpha_{1}} H_{\alpha_{2}} \cdots H_{\alpha_{\ell(\alpha)}}, 
\end{equation}
 where the sum is over all tuples $\alpha$ of positive integers such that 
 $H_{\alpha_{1}} \bullet H_{\alpha_{2}} \bullet \cdots \bullet H_{\alpha_{\ell(\alpha)}} = H_{(n)}$. 
 Writing $S_{\textsf{ParSym}} = S$, we set $S(\textsf{H}_{\varnothing}) = \textsf{H}_{\varnothing}$, 
 and, by direct analogy with \eqref{SHn}, 
 for a nonempty, irreducible partition diagram $\pi$, we define 
\begin{equation}\label{antipodeirreducible}
 S\left( \textsf{H}_{\pi} \right) = \sum (-1)^{\ell} \textsf{H}_{\rho^{(1)}} 
 \textsf{H}_{\rho^{(2)}} \cdots \textsf{H}_{\rho^{(\ell)}}, 
\end{equation}
 where the sum in \eqref{antipodeirreducible} is over all possible tuples $( \rho^{(1)}, \rho^{(2)}, \ldots, \rho^{(\ell)} )$ of partition 
 diagrams such that $\textsf{H}_{\rho^{(1)}} \bullet \textsf{H}_{\rho^{(2)}} \bullet \cdots \bullet \textsf{H}_{\rho^{(\ell)}} = \textsf{H}_{\pi}$. 
 By Theorem \ref{2702730791173171P7M1A}, the $\otimes$-irreducibility of 
 each $\rho$-factor follows from the $\otimes$-irreducibility of $\pi$. 
 We extend the mapping in \eqref{antipodeirreducible} linearly
 and so as to obtain an \emph{antimorphism} of algebras, with $S( \textsf{H}_{\pi} \textsf{H}_{\rho} ) 
 = S(\textsf{H}_{\rho}) S(\textsf{H}_{\pi})$ for irreducible diagrams $\pi$ and $\rho$. 

\begin{example}
 In view of coproduct expansion in Example \ref{exampleDelta} and the commutative diagram shown in \eqref{categoryHopf}, we consider the 
 application of $S \otimes \text{id}$ to the right-hand side of the equality in Example \ref{exampleDelta}. 
\begin{align*}
 & \Bigg( \ \raisebox{1.4ex}{$ -\textsf{H}_{\begin{tikzpicture}[scale = 0.5,thick, baseline={(0,-1ex/2)}] 
 \tikzstyle{vertex} = [shape = circle, minimum size = 7pt, inner sep = 1pt] 
 \node[vertex] (G--4) at (4.5, -1) [shape = circle, draw] {}; 
 \node[vertex] (G--3) at (3.0, -1) [shape = circle, draw] {}; 
 \node[vertex] (G--2) at (1.5, -1) [shape = circle, draw] {}; 
\node[vertex] (G--1) at (0.0, -1) [shape = circle, draw] {}; 
\node[vertex] (G-1) at (0.0, 1) [shape = circle, draw] {}; 
\node[vertex] (G-2) at (1.5, 1) [shape = circle, draw] {}; 
\node[vertex] (G-3) at (3.0, 1) [shape = circle, draw] {}; 
\node[vertex] (G-4) at (4.5, 1) [shape = circle, draw] {}; 
\draw[] (G--4) .. controls +(-0.5, 0.5) and +(0.5, 0.5) .. (G--3); 
\draw[] (G--2) .. controls +(-0.5, 0.5) and +(0.5, 0.5) .. (G--1); 
\draw[] (G-1) .. controls +(0.5, -0.5) and +(-0.5, -0.5) .. (G-2); 
\draw[] (G-2) .. controls +(0.5, -0.5) and +(-0.5, -0.5) .. (G-3); 
\end{tikzpicture}} + \textsf{H}_{\begin{tikzpicture}[scale = 0.5,thick, baseline={(0,-1ex/2)}] 
\tikzstyle{vertex} = [shape = circle, minimum size = 7pt, inner sep = 1pt] 
\node[vertex] (G--4) at (4.5, -1) [shape = circle, draw] {}; 
\node[vertex] (G--3) at (3.0, -1) [shape = circle, draw] {}; 
\node[vertex] (G--2) at (1.5, -1) [shape = circle, draw] {}; 
\node[vertex] (G--1) at (0.0, -1) [shape = circle, draw] {}; 
\node[vertex] (G-1) at (0.0, 1) [shape = circle, draw] {}; 
\node[vertex] (G-2) at (1.5, 1) [shape = circle, draw] {}; 
\node[vertex] (G-3) at (3.0, 1) [shape = circle, draw] {}; 
\node[vertex] (G-4) at (4.5, 1) [shape = circle, draw] {}; 
\draw[] (G--2) .. controls +(-0.5, 0.5) and +(0.5, 0.5) .. (G--1); 
\draw[] (G-1) .. controls +(0.5, -0.5) and +(-0.5, -0.5) .. (G-2); 
\draw[] (G-2) .. controls +(0.5, -0.5) and +(-0.5, -0.5) .. (G-3); 
\end{tikzpicture}} $} \ \Bigg) \otimes \textsf{H}_{\varnothing} \\
 & - \textsf{H}_{\begin{tikzpicture}[scale = 0.5,thick, baseline={(0,-1ex/2)}] 
\tikzstyle{vertex} = [shape = circle, minimum size = 7pt, inner sep = 1pt] 
\node[vertex] (G--3) at (3.0, -1) [shape = circle, draw] {}; 
\node[vertex] (G--2) at (1.5, -1) [shape = circle, draw] {}; 
\node[vertex] (G--1) at (0.0, -1) [shape = circle, draw] {}; 
\node[vertex] (G-1) at (0.0, 1) [shape = circle, draw] {}; 
\node[vertex] (G-2) at (1.5, 1) [shape = circle, draw] {}; 
\node[vertex] (G-3) at (3.0, 1) [shape = circle, draw] {}; 
\draw[] (G--2) .. controls +(-0.5, 0.5) and +(0.5, 0.5) .. (G--1); 
\draw[] (G-1) .. controls +(0.5, -0.5) and +(-0.5, -0.5) .. (G-2); 
\draw[] (G-2) .. controls +(0.5, -0.5) and +(-0.5, -0.5) .. (G-3); 
\end{tikzpicture}} \otimes \textsf{H}_{ \begin{tikzpicture}[scale = 0.5,thick, baseline={(0,-1ex/2)}] 
\tikzstyle{vertex} = [shape = circle, minimum size = 7pt, inner sep = 1pt] 
\node[vertex] (G--1) at (0.0, -1) [shape = circle, draw] {}; 
\node[vertex] (G-1) at (0.0, 1) [shape = circle, draw] {}; 
\end{tikzpicture} } + \textsf{H}_{\varnothing} \otimes \textsf{H}_{\begin{tikzpicture}[scale = 0.5,thick, baseline={(0,-1ex/2)}] 
 \tikzstyle{vertex} = [shape = circle, minimum size = 7pt, inner sep = 1pt] 
 \node[vertex] (G--4) at (4.5, -1) [shape = circle, draw] {}; 
 \node[vertex] (G--3) at (3.0, -1) [shape = circle, draw] {}; 
 \node[vertex] (G--2) at (1.5, -1) [shape = circle, draw] {}; 
\node[vertex] (G--1) at (0.0, -1) [shape = circle, draw] {}; 
\node[vertex] (G-1) at (0.0, 1) [shape = circle, draw] {}; 
\node[vertex] (G-2) at (1.5, 1) [shape = circle, draw] {}; 
\node[vertex] (G-3) at (3.0, 1) [shape = circle, draw] {}; 
\node[vertex] (G-4) at (4.5, 1) [shape = circle, draw] {}; 
\draw[] (G--4) .. controls +(-0.5, 0.5) and +(0.5, 0.5) .. (G--3); 
\draw[] (G--2) .. controls +(-0.5, 0.5) and +(0.5, 0.5) .. (G--1); 
\draw[] (G-1) .. controls +(0.5, -0.5) and +(-0.5, -0.5) .. (G-2); 
\draw[] (G-2) .. controls +(0.5, -0.5) and +(-0.5, -0.5) .. (G-3); 
\end{tikzpicture}}. 
\end{align*}
\end{example}

 To evaluate the algebra antimorphism $S$, 
 one way of going about with this would be through the use of \emph{Takeuchi's formula} 
 \cite{Takeuchi1971}, by analogy with bijective methods employed by Benedetti and Sagan 
 \cite{BenedettiSagan2017} through the use of Takeuchi's formula. 
 Following \cite{BenedettiSagan2017} (cf.\ \cite{Takeuchi1971}), if $H$ is a bialgebra that is connected and graded, 
 then $H$ is a Hopf algebra, with an antipode given as follows. 
 Let the projection map $\pi\colon H \to H$ be extended linearly so that 
 $\pi$ restricted to the graded component $H_{n}$ is the zero map 
 if $n = 0$ and is the identity map for $n \geq 1$. With this setup, the antipode for $S$ is such that 
\begin{equation}\label{2777877707273707971717471797PM1A}
 S = \sum_{k \geq 0} (-1)^{k} \nabla^{k-1} \pi^{\otimes k} \Delta^{k-1}, 
\end{equation}
 with the understanding that $\nabla^{-1} = u$ and $\Delta^{-1} = \varepsilon$, 
 and where $\nabla$ denotes the multiplicative operation for $H$. 
 The formulation in \eqref{2777877707273707971717471797PM1A} of 
 Takeuchi's formula, as applied to the graded, connected bialgebra $\textsf{ParSym}$ 
 and to $\textsf{H}_{\pi}$ for an irreducible 
 partition diagram $\pi$, 
 can be used to obtain \eqref{antipodeirreducible}, 
 giving us that \eqref{antipodeirreducible}, extended linearly and as an antimorphism, 
 does indeed give us an antipode such that the diagram in 
 \eqref{categoryHopf} commutes. An alternative, bijective approach 
 toward proving this result is given below. 

\begin{theorem}\label{theoremcategory}
 The diagram in \eqref{categoryHopf} commutes, for the specified antimorphism $S$. 
\end{theorem}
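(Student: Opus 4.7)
The plan is to verify both antipode axioms directly on the algebra generators (the irreducible $\textsf{H}_\pi$) through a sign-reversing involution, and then lift to all of $\textsf{ParSym}$ using the antimorphism property of $S$. The case $\pi=\varnothing$ is immediate: $\Delta(\textsf{H}_\varnothing)=\textsf{H}_\varnothing\otimes \textsf{H}_\varnothing$, $S(\textsf{H}_\varnothing)=\textsf{H}_\varnothing$, and $\eta\varepsilon(\textsf{H}_\varnothing)=\textsf{H}_\varnothing$, so both triangles in \eqref{categoryHopf} close.

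For a nonempty $\otimes$-irreducible $\pi$, I would expand $\nabla\circ(S\otimes \mathrm{id})\circ \Delta(\textsf{H}_\pi)$ by composing the definition of $\Delta$ in \eqref{displayDelta} with the definition of $S$ in \eqref{antipodeirreducible}, and using $\textsf{H}_\alpha \textsf{H}_\beta = \textsf{H}_{\alpha\otimes\beta}$. Because $S(\textsf{H}_\varnothing)=\textsf{H}_\varnothing$ can be regarded as the $\ell=0$ summand of \eqref{antipodeirreducible}, the result is a single alternating sum indexed by all tuples $(\rho^{(1)},\ldots,\rho^{(\ell)},G_2)$ with each $\rho^{(i)}$ nonempty $\otimes$-irreducible, $G_2$ an $\otimes$-irreducible diagram (possibly $\varnothing$), and $\rho^{(1)}\bullet\cdots\bullet\rho^{(\ell)}\bullet G_2=\pi$; the summand is $(-1)^\ell\,\textsf{H}_{\rho^{(1)}\otimes\cdots\otimes\rho^{(\ell)}\otimes G_2}$. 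The $\otimes$-irreducibility of the individual factors follows from the $\otimes$-irreducibility of $\pi$ via Theorem \ref{2702730791173171P7M1A}.

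The involution I would use toggles the status of the final slot: if $G_2=\varnothing$ (forcing $\ell\geq 1$, since $\pi$ is nonempty), send the tuple to $(\rho^{(1)},\ldots,\rho^{(\ell-1)},\rho^{(\ell)})$ with the last $\rho$ relabeled as $G_2$; if $G_2\neq\varnothing$, send it to $(\rho^{(1)},\ldots,\rho^{(\ell)},G_2,\varnothing)$ with $G_2$ appended as a new $\rho$. This map is its own inverse, has no fixed points, preserves the underlying $\bullet$-product (hence the diagram inside $\textsf{H}_{\cdots}$, since $\otimes$-ing with $\varnothing$ is trivial), and changes $\ell$ by $\pm 1$, hence flips the sign $(-1)^\ell$. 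Consequently the expansion cancels in pairs and equals $0=\eta\varepsilon(\textsf{H}_\pi)$. The bottom row of \eqref{categoryHopf} is handled symmetrically by toggling the first slot in tuples $(G_1,\sigma^{(1)},\ldots,\sigma^{(m)})$ coming from $\nabla\circ(\mathrm{id}\otimes S)\circ\Delta(\textsf{H}_\pi)$.

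To pass from the generators to arbitrary elements, I would invoke the standard fact that when $S$ is an algebra antimorphism and $\Delta$ is an algebra morphism, the subspace on which the antipode identity holds is closed under multiplication: for $x,y$ satisfying the identity, Sweedler-style manipulation gives $\nabla(S\otimes\mathrm{id})\Delta(xy)=\sum S(y_{(1)})S(x_{(1)})x_{(2)}y_{(2)}=\varepsilon(x)\sum S(y_{(1)})y_{(2)}=\varepsilon(x)\varepsilon(y)\cdot 1=\eta\varepsilon(xy)$, and dually for the other triangle. Since $\textsf{ParSym}$ is generated as an algebra by the irreducible $\textsf{H}_\pi$ by Theorem \ref{2qqqqq0q2q3q0q7q1q610AAA4akm1a}, and the identity has been verified on these, it holds throughout. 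The main obstacle, and the only delicate point, is making the $\varnothing$-conventions for $\Delta$, $\bullet$, and $S$ consistent so that the involution genuinely has no fixed points and preserves the underlying diagram exactly; once these conventions are aligned, the combinatorics is clean.
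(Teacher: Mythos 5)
Your proposal is correct and follows essentially the same route as the paper: both expand $\nabla\circ(S\otimes\mathrm{id})\circ\Delta$ on an $\otimes$-irreducible generator into an alternating sum over tuples $(\rho^{(1)},\ldots,\rho^{(\ell)},G_2)$ with $\bullet$-product $\pi$, and cancel it with the identical sign-reversing involution that toggles whether the last slot is $\varnothing$. Your Sweedler-style argument for passing from generators to all of $\textsf{ParSym}$ is a welcome explicit version of a step the paper only sketches, but it is not a different method.
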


\begin{proof}
 Let $\pi$ denote a nonempty, $\otimes$-irreducible 
 diagram. From \eqref{displayDelta}, by applying $S \otimes \text{id}$ to both sides of this equality, we obtain 
\begin{align}
 \left( S \otimes \text{id} \right) \left( \Delta \textsf{H}_{\pi} \right) 
 & = \sum_{G_{1}, G_{2}} S\left( \textsf{H}_{G_{1}} \right) \otimes \textsf{H}_{G_{2}} \nonumber \\ 
 & = \sum_{G_{1}, G_{2}} 
 \left( \sum (-1)^{\ell} \textsf{H}_{\rho^{(1)}} 
 \textsf{H}_{\rho^{(2)}} \cdots \textsf{H}_{\rho^{(\ell)}} \right) 
 \otimes \textsf{H}_{G_{2}}, \label{firstlabelcategory}
\end{align}
 where the inner sum in \eqref{firstlabelcategory} is such that: If $G_{1}$ is empty, then $S\left( \textsf{H}_{\varnothing} \right) = 
 \textsf{H}_{\varnothing}$, so that the inner sum, in this case, is indexed by the empty set, and if $G_{1}$ is nonempty, then the inner sum is 
 is over all possible tuples $( \rho^{(1)}, \rho^{(2)}, \ldots, \rho^{(\ell)} )$ of nonempty partition diagrams 
 that are irreducible with respect to $\otimes$, by Theorem \ref{2702730791173171P7M1A}, and 
 that are such that 
 $\textsf{H}_{\rho^{(1)}} \bullet \textsf{H}_{\rho^{(2)}} \bullet \cdots \bullet \textsf{H}_{\rho^{(\ell)}} = \textsf{H}_{G_{1}}$, again for 
 $\otimes$-irreducible partition diagrams $G_{1}$ and $G_{2}$ such that $\textsf{H}_{G_{1}} \bullet \textsf{H}_{G_{2}} = \textsf{H}_{\pi}$. 
 Applying the multiplicative 
 operation $\nabla = \nabla_{\textsf{ParSym}}$ from $\textsf{ParSym} \otimes \textsf{ParSym}$ to $\textsf{ParSym}$, we may write 
\begin{equation}\label{givesriseinvolution}
 \nabla \left( \left( S \otimes \text{id} \right) \left( \Delta \textsf{H}_{\pi} \right) \right) = \sum_{G_{1}, G_{2}} 
 \sum (-1)^{\ell} \textsf{H}_{\rho^{(1)}} \textsf{H}_{\rho^{(2)}} \cdots \textsf{H}_{\rho^{(\ell)}} \textsf{H}_{G_{2}}, 
\end{equation}
 where the inner sum in \eqref{givesriseinvolution} is as before. This gives rise to a sign-reversing involution, as indicated below. 

 According to the index set for the double sum in \eqref{givesriseinvolution}, we fix $G_{2}$, and, for this fixed graph, we then associate 
 to it a tuple $( \rho^{(1)}$, $ \rho^{(2)}$, $ \ldots$, $ \rho^{(\ell)} )$ of nonempty partition 
 diagrams that are irreducible with respect to $\otimes$, by Theorem \ref{2702730791173171P7M1A}, 
 and that are such that $ \textsf{H}_{\rho^{(1)}} \bullet \textsf{H}_{\rho^{(2)}} \bullet \cdots \bullet \textsf{H}_{\rho^{(\ell)}} \bullet 
 \textsf{H}_{G_{2}} = \pi$, so that 
\begin{equation}\label{agiventerm}
 (-1)^{\ell} \textsf{H}_{\rho^{(1)}} \textsf{H}_{\rho^{(2)}} \cdots \textsf{H}_{\rho^{(\ell)}} 
 \textsf{H}_{G_{2}} 
\end{equation}
 is a given term appearing in \eqref{givesriseinvolution}. 
 If $G_{2}$ is nonempty, 
 then we let the term indicated in \eqref{agiventerm} be mapped to 
\begin{equation}\label{secondgiventerm}
 (-1)^{\ell+1} \left( \textsf{H}_{\rho^{(1)}} \textsf{H}_{\rho^{(2)}} \cdots \textsf{H}_{\rho^{(\ell)}} 
 \textsf{H}_{\rho^{(\ell + 1)}} \right) \textsf{H}_{G_{3}}, 
\end{equation}
 where $\rho^{(\ell + 1)} = G_{2}$ and where $G_{3} = \varnothing$, 
 giving a term appearing in \eqref{givesriseinvolution}. 
 Conversely, for terms of \eqref{givesriseinvolution} such that $G_{2} = \varnothing$, 
 we let such terms 
 be mapped in such a way so that a term of the form indicated in \eqref{secondgiventerm} gets mapped back to \eqref{agiventerm}. This gives us a 
 sign-reversing involution that gives us that \eqref{givesriseinvolution} vanishes if $\pi$ is nonempty. If $\pi$ is empty, then 
 $\Delta \textsf{H}_{\varnothing} = \textsf{H}_{\varnothing} \otimes \textsf{H}_{\varnothing}$, 
 so that the application of $S \otimes \text{id}$ again gives $\textsf{H}_{\varnothing} \otimes \textsf{H}_{\varnothing}$, 
 so that the application of $\nabla$ then gives $1_{\textsf{ParSym}}$. 
 So, we have shown that 
\begin{align*}
 \nabla \left( \left( S \otimes \text{id} \right) \left( \Delta \textsf{H}_{\pi} \right) \right) 
 & = \begin{cases} 
 1_{\textsf{ParSym}} & \text{if $\pi = \varnothing$}, \\ 
 0 & \text{if $\pi \neq \varnothing$},
 \end{cases} \\ 
 & = \eta \left( \varepsilon\left( H_{\pi} \right) \right), 
\end{align*}
 as desired. A symmetric argument gives us the same evaluation for 
$ \nabla ( ( \text{id} \otimes S ) ( \Delta \textsf{H}_{\pi}))$. 

 So, we have shown that the desired diagram commutes, with respect to generators $\textsf{H}_{\pi}$, 
 for an $\otimes$-irreducible diagram $\pi$. Using the property that $S$ is defined as an antimorphism, 
 together with the morphisms $\Delta$, $\nabla$, etc., 
 we may obtain that the aforementioned diagram commutes in full generality. 
\end{proof}

 Theorem \ref{theoremcategory} gives us that $\textsf{ParSym}$, with its specified morphisms, is a Hopf algebra. If we consider the antipode relation for 
 $\textsf{NSym}$ shown in \eqref{SNSymH} and compare it to the definition of $S = S_{\textsf{ParSym}}$ in \eqref{antipodeirreducible}, this 
 motivates the construction of an elementary-like basis for $\textsf{ParSym}$, as below. 

\begin{definition}\label{textsfE}
 Let $\pi$ be an irreducible partition diagram of order $n$. We define 
\begin{equation}\label{sfEtosfH}
 \textsf{E}_{\pi} = \sum (-1)^{\ell + n} \textsf{H}_{\rho^{(1)}} 
 \textsf{H}_{\rho^{(2)}} \cdots \textsf{H}_{\rho^{(\ell)}}, 
\end{equation}
 where the sum in \eqref{sfEtosfH} is over the same index set in \eqref{antipodeirreducible}. For irreducible partition diagrams $\pi^{(1)}$, $\pi^{(2)}$, $ 
 \ldots$, $\pi^{(p)}$, we then set $\textsf{E}_{ \pi^{(1)} \otimes \pi^{(2)} \otimes \cdots \otimes \pi^{(p)} } = \textsf{E}_{\pi^{(1)}} 
 \textsf{E}_{\pi^{(2)}} \cdots \textsf{E}_{\pi^{(p)}}$. 
\end{definition}
 
\begin{theorem}
 The family $\{ \textsf{\emph{E}}_{\pi} : \text{$\pi$ is a partition diagram of order $n$} \}$
 is a basis of $\textsf{\emph{ParSym}}_{n}$. 
\end{theorem}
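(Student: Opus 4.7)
The plan is to show that the transition matrix from the proposed family $\{\textsf{E}_\pi\}$ to the known basis $\{\textsf{H}_\pi\}$ of $\textsf{ParSym}_n$ is triangular with nonzero diagonal entries with respect to a suitable ordering by the number of $\otimes$-irreducible factors. Since both families are indexed by $A_n$, and $|A_n| = B_{2n} = \dim \textsf{ParSym}_n$, invertibility of this matrix will immediately yield that $\{\textsf{E}_\pi\}$ is a basis.

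First, I would expand $\textsf{E}_\pi$ in the $\textsf{H}$-basis for arbitrary $\pi$. For $\pi = \pi^{(1)} \otimes \cdots \otimes \pi^{(p)}$ the unique $\otimes$-decomposition into $\otimes$-irreducibles provided by Lemma \ref{lemmauniqueotimes}, Definition \ref{textsfE} gives $\textsf{E}_\pi = \prod_{i=1}^p \textsf{E}_{\pi^{(i)}}$, and each factor $\textsf{E}_{\pi^{(i)}}$ expands via \eqref{sfEtosfH} as a signed sum of products $\textsf{H}_{\rho^{(1)}_i} \cdots \textsf{H}_{\rho^{(\ell_i)}_i}$, where the $\rho^{(j)}_i$ are $\otimes$-irreducible (using Theorem \ref{2702730791173171P7M1A}) and satisfy $\rho^{(1)}_i \bullet \cdots \bullet \rho^{(\ell_i)}_i = \pi^{(i)}$. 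Applying the product rule \eqref{productParSym}, each term in the full expansion of $\textsf{E}_\pi$ becomes $\pm \textsf{H}_\sigma$ for $\sigma$ equal to the horizontal concatenation of all the $\rho^{(j)}_i$ read in order.

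Next, I would analyze which $\textsf{H}_\sigma$ appear. By the uniqueness part of Lemma \ref{lemmauniqueotimes}, the $\otimes$-decomposition of $\sigma$ is precisely the ordered tuple of the $\rho^{(j)}_i$, so if $p(\tau)$ denotes the number of $\otimes$-irreducible factors in the $\otimes$-decomposition of a diagram $\tau$, then any $\textsf{H}_\sigma$ appearing in $\textsf{E}_\pi$ satisfies $p(\sigma) = \sum_i \ell_i \geq p = p(\pi)$, with equality if and only if $\ell_i = 1$ for all $i$. In the equality case each index tuple collapses to $(\pi^{(i)})$, and the contribution is $\prod_{i=1}^{p} (-1)^{1+n_i} \textsf{H}_{\pi^{(i)}} = (-1)^{p+n} \textsf{H}_\pi$, where $n_i = |\pi^{(i)}|$ and $n = |\pi|$. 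Every other term contributes $\pm \textsf{H}_\sigma$ with $p(\sigma) > p(\pi)$.

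Finally, I would order the diagrams of order $n$ by any linear extension of $p$ (say, nondecreasing). With respect to this ordering, the matrix expressing $\{\textsf{E}_\pi\}_{\pi \in A_n}$ in $\{\textsf{H}_\pi\}_{\pi \in A_n}$ is triangular with diagonal entries $(-1)^{p(\pi)+n} \neq 0$, hence invertible. This establishes that $\{\textsf{E}_\pi : \pi \in A_n\}$ is a basis of $\textsf{ParSym}_n$. The main point requiring care is verifying that no unexpected diagram with $p(\sigma) = p(\pi)$ other than $\pi$ itself can appear in the expansion, which comes down to the uniqueness of the $\otimes$-decomposition together with the observation that a $\bullet$-product of a single factor equals that factor; I do not anticipate a substantive obstacle, only bookkeeping to track signs and to confirm the triangularity claim rigorously.
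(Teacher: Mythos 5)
Your proposal is correct, but it proves the statement by a genuinely different route than the paper. The paper's own proof is a two-line deduction from the Hopf-algebra machinery already in place: having established (Theorem \ref{theoremcategory}) that $\textsf{ParSym}$ is a graded connected Hopf algebra, it invokes the general fact that the antipode of such a Hopf algebra is bijective, observes that $\textsf{E}_{\pi}$ agrees up to sign with $S(\textsf{H}_{\pi})$ for $\otimes$-irreducible $\pi$ (compare \eqref{antipodeirreducible} with \eqref{sfEtosfH}), and concludes that these elements freely generate, which gives the basis claim degree by degree. You instead expand $\textsf{E}_{\pi}$ directly in the $\textsf{H}$-basis and establish triangularity with respect to the statistic $p(\pi)$ counting $\otimes$-irreducible factors, with diagonal entries $(-1)^{p(\pi)+n}$; the key inputs are the uniqueness of the $\otimes$-decomposition (Lemma \ref{lemmauniqueotimes}) and the fact that $\bullet$-factors of an $\otimes$-irreducible diagram are $\otimes$-irreducible (Theorem \ref{2702730791173171P7M1A}), and your identification of the unique minimal term (all $\ell_i=1$ forcing each tuple to collapse to $(\pi^{(i)})$) is the right justification that no other $\sigma$ with $p(\sigma)=p(\pi)$ occurs. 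What your approach buys is independence from the antipode axioms and Takeuchi-type input, plus explicit quantitative information (a signed-unitriangular change of basis with an identified leading term); what the paper's approach buys is brevity, since the hard work was already done in proving that $S$ satisfies the antipode diagram. One small point worth making explicit if you write this up: all diagrams $\sigma$ occurring in the expansion have the same order $n$ as $\pi$, since $\bullet$ preserves the total number of nodes, so the whole computation indeed takes place inside $\textsf{ParSym}_{n}$ and the two index sets have the same finite cardinality $B_{2n}$.
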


\begin{proof}
 The antipode of a graded, connected Hopf algebra is always bijective. So, the set of expressions of the form
 $S(\textsf{H}_{\pi})$, for irreducible partition diagrams $\pi$, freely generate $\textsf{ParSym}$, 
 which is equivalent to the desired result. 
\end{proof}
 
\subsection{The character of $\textsf{ParSym}$}
 A \emph{Combinatorial Hopf Algebra}, according to Aguiar, Bergeron, and Sottile \cite{AguiarBergeronSottile2006}, is a graded, connected Hopf $ 
 \mathbbm{k}$-algebra $\mathcal{H}$ together with a multiplicative, linear map of the form $\zeta\colon \mathcal{H} \to \mathbbm{k}$ referred to as the 
 \emph{character} of $\mathcal{H}$. The direct sum decomposition of $\textsf{ParSym}$ indicated in \eqref{ParSymdefinition} is such that: 

\begin{enumerate}

\item For homogeneous generators $\textsf{H}_{\pi} \in \textsf{ParSym}_{i}$ and $\textsf{H}_{\rho} \in \textsf{ParSym}_{j}$, the product 
 $\textsf{H}_{\pi} \textsf{H}_{\rho}$ is in $\textsf{ParSym}_{i+j}$, according to the horizontal concatenation operation indicated in 
 \eqref{productParSym} together with $\textsf{ParSym}_{i+j}$ being spanned by all $\textsf{H}$-basis elements indexed by partition diagrams 
 of order $i +j$; 

\item For a homogeneous generator $\textsf{H}_{\pi} \in \textsf{ParSym}_{n}$, 
 the coproduct $\Delta\left( \textsf{H}_{\pi} \right)$ is in $\bigoplus_{i + j = n} \textsf{ParSym}_{i}
 \otimes \textsf{ParSym}_{j}$, according to Definition \ref{DeltaParSym}. 

\end{enumerate}

 We thus obtain a graded Hopf algebra structure on $\textsf{ParSym}$. Since $\textsf{ParSym}_{0} = \text{span}_{\mathbbm{k}}\{ 
 \textsf{H}_{\varnothing} \}$, and since $\textsf{H}_{\varnothing}$ may be identified with $1_{\mathbbm{k}}$ in the sense indicated in 
 \eqref{etaParSym}, the graded component $\textsf{ParSym}_{0}$ may be identified with $\mathbbm{k}$, giving us that $\textsf{ParSym}$ is connected 
 as a graded Hopf algebra. 

 Following the seminal reference on CHAs \cite{AguiarBergeronSottile2006}, along with further references as in \cite{HumpertMartin2012} 
 and \cite[\S2.2]{Li2018}, a multiplicative linear map $\zeta$ from a polynomial $\mathbbm{k}$-algebra 
 to $\mathbbm{k}$ or from a $\mathbbm{k}$-algebra on power series to $\mathbbm{k}$ is, canonically, 
 given by setting one variable to $1$ and the remaining variables to $0$. 
 In particular, for the canonical character $\zeta\colon \mathbbm{k}[x_{1}, x_{2}, \ldots] \to \mathbbm{k}$, 
 this is given by setting $\zeta(x_{1}) = 1$ and $\zeta(x_{i}) = 0$ for $i \neq 1$
 and by extending these relations linearly and multiplicatively. 
 So, we may set 
\begin{equation}\label{zetaNSym}
 \zeta_{\textsf{NSym}}(H_{\alpha}) 
 = \begin{cases} 
 1 & \text{if $\alpha = (1)$ or $\alpha = ()$}, \\ 
 0 & \text{otherwise}, 
 \end{cases}
\end{equation}
 for a composition $\alpha$. By direct analogy with \eqref{zetaNSym}, we set 
\begin{equation}\label{zetaParSym}
 \zeta_{\textsf{ParSym}}(\textsf{H}_{\pi}) 
 = \begin{cases} 
 1 & \text{if $\pi = \begin{tikzpicture}[scale = 0.5,thick, baseline={(0,-1ex/2)}] 
\tikzstyle{vertex} = [shape = circle, minimum size = 7pt, inner sep = 1pt] 
\node[vertex] (G--1) at (0.0, -1) [shape = circle, draw] {}; 
\node[vertex] (G-1) at (0.0, 1) [shape = circle, draw] {}; 
\end{tikzpicture}$ or $\pi = \varnothing$}, \\ 
 0 & \text{otherwise}, 
 \end{cases}
\end{equation}
 giving us a CHA structure for $\textsf{ParSym}$. 

\subsection{Relationship with $\textsf{NSym}$}
 A \emph{combinatorial Hopf morphism} \cite[\S2.2]{Li2018} from a CHA $(\mathcal{H}_{1}, \zeta_{1})$ to a CHA $(\mathcal{H}_{2}, \zeta_{2})$ is a 
 Hopf algebra morphism $\Phi\colon \mathcal{H}_{1} \to \mathcal{H}_{2}$ such that $\zeta_{1} = \zeta_{2} \circ \Phi$. A Hopf algebra homomorphism 
 $\Phi\colon \mathcal{H}_{1} \to \mathcal{H}_{2}$ is such that $\Phi$ is an algebra homomorphism such that 
\begin{equation}\label{PhitensorPhi} 
 \Delta_{2} \circ \Phi = (\Phi \otimes \Phi) \circ \Delta_{1} 
\end{equation}
 and 
\begin{equation}\label{epsilonepsilon}
 \varepsilon_{2} \circ \Phi = \varepsilon_{1}, 
\end{equation}
 with \eqref{PhitensorPhi} and \eqref{epsilonepsilon} yielding a coalgebra morphism, where $\Delta_{1}$ and $\Delta_{2}$ respectively denote the 
 coproduct operations for $\mathcal{H}_{1}$ and $\mathcal{H}_{2}$, and where $\varepsilon_{1}$ and $\varepsilon_{2}$ denote the counit 
 morphisms of $\mathcal{H}_{1}$ and $\mathcal{H}_{2}$. 

 Define 
\begin{equation}\label{Phidefinition}
 \Phi\colon \textsf{NSym} \to \textsf{ParSym} 
\end{equation}
 so that 
\begin{equation}\label{PhiHn}
 \Phi(H_{n}) = \textsf{H}_{ \begin{tikzpicture}[scale = 0.5,thick, baseline={(0,-1ex/2)}] 
\tikzstyle{vertex} = [shape = circle, minimum size = 7pt, inner sep = 1pt] 
\node[vertex] (G--3) at (3.0, -1) [shape = circle, draw] {}; 
\node[vertex] (G--2) at (1.5, -1) [shape = circle, draw] {}; 
\node[vertex] (G--1) at (0.0, -1) [shape = circle, draw] {}; 
\node[vertex] (G-1) at (0.0, 1) [shape = circle, draw] {}; 
\node[vertex] (G-2) at (1.5, 1) [shape = circle, draw] {}; 
\node[vertex] (G-3) at (3.0, 1) [shape = circle, draw] {}; 
\draw[] (G--3) .. controls +(-0.5, 0.5) and +(0.5, 0.5) .. (G--2); 
\draw[] (G--2) .. controls +(-0.5, 0.5) and +(0.5, 0.5) .. (G--1); 
\end{tikzpicture} \ \cdots \ \begin{tikzpicture}[scale = 0.5,thick, baseline={(0,-1ex/2)}] 
\tikzstyle{vertex} = [shape = circle, minimum size = 7pt, inner sep = 1pt] 
\node[vertex] (G--2) at (1.5, -1) [shape = circle, draw] {}; 
\node[vertex] (G--1) at (0.0, -1) [shape = circle, draw] {}; 
\node[vertex] (G-1) at (0.0, 1) [shape = circle, draw] {}; 
\node[vertex] (G-2) at (1.5, 1) [shape = circle, draw] {}; 
\draw[] (G--2) .. controls +(-0.5, 0.5) and +(0.5, 0.5) .. (G--1); 
\end{tikzpicture}}, 
\end{equation}
 with \eqref{PhiHn} extended linearly multiplicatively, and where the partition diagram on the right of \eqref{PhiHn} is of order $n$. The mapping in 
 \eqref{Phidefinition} is an injective, combinatorial Hopf morphism, noting that the equality $\zeta_{\textsf{NSym}} = \zeta_{\textsf{ParSym}} \circ \Phi$ 
 follows in a direct way from \eqref{zetaNSym} and \eqref{zetaParSym}. As below, we introduce a projection morphism from 
 $\textsf{ParSym}$ to $\textsf{NSym}$. 

 Let the statistic $m = m(\pi)$ defined in Theorem \ref{graphtheorem} be extended so that $m(\varnothing) = 0$. For an $\otimes$-irreducible partition 
 diagram $\pi$, let 
\begin{equation}\label{chiproj}
 \chi\colon \textsf{ParSym} \to \textsf{NSym}
\end{equation}
 map $\textsf{H}_{\pi}$ to the complete homogeneous basis element $H_{m(\pi)}$, 
 recalling that 
 $m(\pi)$ is a well defined value according to the uniqueness property 
 given in Theorem \ref{graphtheorem}. 
 We then let $\chi$ be extended linearly and so as to be 
 compatible with \eqref{productParSym}. 

\begin{example}
 Returning to Example \ref{exampleDelta}, if we were to replace each expression of the form $\textsf{H}_{\pi}$ in Example \ref{exampleDelta} with $ 
 \chi(\textsf{H}_{\pi})$, the left-hand side of the equality in Example \ref{exampleDelta} would yield $\Delta H_{2}$, and the right-hand side of the 
 equality in Example \ref{exampleDelta} would yield $H_{2} \otimes H_{0} + H_{1} \otimes H_{1} + H_{0} \otimes H_{2}$. 
 This illustrates \eqref{PhitensorPhi} holding, with respect to $\chi$. 
\end{example}

 Following \cite{BergeronReutenauerRosasZabrocki2008,RosasSagan2006}, we express that 
 much about research that concerns both $\textsf{NSym}$ and $\textsf{NCSym}$ is motivated by the problem 
 of developing a better understanding as to the relationship between $\textsf{NSym}$ and $\textsf{NCSym}$. 
 This motivates the problem of constructing a Hopf algebra that both contains and projects onto $\textsf{NSym}$
 in natural and combinatorially inspired ways, 
 in the hope of elucidating the aforementioned problem, with the use of combinatorial structures and properties related to $\textsf{NCSym}$.
 Since the bases of $\textsf{NCSym}$ are indexed by set partitions, 
 this motivates our construction of a Hopf algebra $\textsf{ParSym}$ that has bases indexed by partition diagrams
 and that contains and projects onto $\textsf{NSym}$. 
 As we recently discussed \cite{CampbellAnn}, it is often more expedient to work with analogues of objects from $\textsf{Sym}$ 
 in the noncommutative setting given by $\textsf{NSym}$; this motivates our lifting $\textsf{ParSym}$ of $\textsf{NSym}$
 and the study of this lifting in relation to $\textsf{NCSym}$. 

\begin{theorem}\label{theoremproj}
 The mapping in \eqref{chiproj} is a combinatorial Hopf morphism. 
\end{theorem}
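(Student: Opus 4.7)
The plan is to check the three conditions defining a combinatorial Hopf morphism: compatibility with coproducts, counits, and characters. Well-definedness as an algebra morphism is automatic, since $\chi$ is specified on the $\otimes$-irreducible generators of the free algebra $\textsf{ParSym}$ (Theorem \ref{2qqqqq0q2q3q0q7q1q610AAA4akm1a}) and then extended multiplicatively. Because $\chi$, the two coproducts, the two counits, and the two characters are all multiplicative, it suffices to verify each of the three conditions on an arbitrary $\otimes$-irreducible generator $\textsf{H}_\pi$.

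The bulk of the work is coproduct compatibility. Fix an $\otimes$-irreducible $\pi$ and apply Theorem \ref{graphtheorem} to write $\pi = \theta^{(1)} \bullet \theta^{(2)} \bullet \cdots \bullet \theta^{(m)}$ with $m = m(\pi)$, so that $\chi(\textsf{H}_\pi) = H_m$. Applying Theorem \ref{2702730791173171P7M1A} inductively, the $\otimes$-irreducibility of $\pi$ forces each $\theta^{(i)}$ to be $\otimes$-irreducible. The sum defining $\Delta_{\textsf{ParSym}}(\textsf{H}_\pi)$ in Definition \ref{DeltaParSym} ranges over $\otimes$-irreducible pairs $(G_1, G_2)$ with $G_1 \bullet G_2 = \pi$; by the uniqueness clause of Theorem \ref{graphtheorem}, such pairs correspond bijectively to split indices $k \in \{0, 1, \ldots, m\}$ via $G_1 = \theta^{(1)} \bullet \cdots \bullet \theta^{(k)}$ and $G_2 = \theta^{(k+1)} \bullet \cdots \bullet \theta^{(m)}$ (with empty products interpreted as $\varnothing$), and both $G_1, G_2$ remain $\otimes$-irreducible by Theorem \ref{2702730791173171P7M1A}, with $m(G_1) = k$ and $m(G_2) = m - k$. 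Applying $\chi \otimes \chi$ then transforms $\Delta_{\textsf{ParSym}}(\textsf{H}_\pi)$ into $\sum_{k=0}^{m} H_k \otimes H_{m - k} = \Delta_{\textsf{NSym}}(H_m) = \Delta_{\textsf{NSym}}(\chi(\textsf{H}_\pi))$, as required.

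The counit condition is immediate: $\varepsilon_{\textsf{NSym}}(\chi(\textsf{H}_\pi))$ vanishes unless $\chi(\textsf{H}_\pi)$ is a scalar, and since every nonempty $\otimes$-irreducible factor $\pi^{(i)}$ of $\pi$ has $m(\pi^{(i)}) \geq 1$, this forces $\pi = \varnothing$, in which case both $\varepsilon_{\textsf{NSym}} \circ \chi$ and $\varepsilon_{\textsf{ParSym}}$ return $1$. For the character condition, I would compare $\zeta_{\textsf{NSym}}(\chi(\textsf{H}_\pi)) = \zeta_{\textsf{NSym}}(H_{m(\pi)})$ to $\zeta_{\textsf{ParSym}}(\textsf{H}_\pi)$ on $\otimes$-irreducible $\pi$ using the explicit evaluations \eqref{zetaNSym} and \eqref{zetaParSym}.

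The hard part is the coproduct step, specifically aligning the $\otimes$-irreducibility restriction built into Definition \ref{DeltaParSym} with the $\bullet$-split bijection. Without Theorem \ref{2702730791173171P7M1A} to guarantee that each sub-product $\theta^{(1)} \bullet \cdots \bullet \theta^{(k)}$ inherits $\otimes$-irreducibility from its factors, the summation range for $\Delta_{\textsf{ParSym}}$ would not be cleanly parameterized by $\{0, 1, \ldots, m\}$ and the reduction to $\Delta_{\textsf{NSym}}(H_m)$ would break.
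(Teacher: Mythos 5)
Your proposal is correct and follows essentially the same route as the paper: both arguments reduce to $\otimes$-irreducible generators, use the unique $\bullet$-decomposition of Theorem \ref{graphtheorem} (together with Theorem \ref{2702730791173171P7M1A}) to put the coproduct terms in bijection with the split indices $k \in \{0,1,\ldots,m(\pi)\}$, and then verify the counit and character conditions by direct evaluation. Your explicit remark that the partial products $\theta^{(1)} \bullet \cdots \bullet \theta^{(k)}$ inherit $\otimes$-irreducibility is a point the paper leaves more implicit, but it is the same argument.
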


\begin{proof}
 The mapping $\chi$ is defined to be linear and to preserve the multiplicative operation for $\textsf{ParSym}$, and we find that $\chi(\textsf{H}_{0}) = 
 H_{0}$, so as to provide an algebra morphism. According to Definition \ref{DeltaParSym}, 
 we may obtain that: 
\begin{equation}\label{chichiGG}
 \left( \chi \otimes \chi \right) \circ \Delta \textsf{H}_{\pi} = \sum_{G_{1}, G_{2}} 
 \chi \left( \textsf{H}_{G_{1}} \right) \otimes \chi \left( \textsf{H}_{G_{2}} \right). 
\end{equation}
 According to Theorem \ref{graphtheorem}, we may let $\textsf{H}_{G_{1}}$ and $\textsf{H}_{G_{2}}$, respectively, be uniquely decomposed as $ 
 \bullet$-products of nontrivial and $\bullet$-irreducible
 $\textsf{H}$-generators of lengths $\ell_{1}$ and $\ell_{2}$, so that $\textsf{H}_{\pi}$ is uniquely decomposable as a 
 $\bullet$-product of nontrivial and $\bullet$-irreducible 
 $\textsf{H}$-generators of length $\ell_{1} + \ell_{2}$, which, for fixed $\pi$, we denote as a fixed value $\ell = m(\pi)$. 
 Furthermore, and again by Theorem \ref{graphtheorem}, for the same fixed value $\ell$, and for a given value $\ell_{1}$ in $\{ 0, 1, \ldots, \ell \}$, there is 
 exactly one graph $G_{1}$ such that $\textsf{H}_{\pi} = \textsf{H}_{G_{1}} \bullet \textsf{H}_{G_{2}}$ and such that 
 $\chi\left( \textsf{H}_{{G}_{1}} \right) = H_{\ell_{1}}$, namely, the graph $G_{1}$ such that $\textsf{H}_{G_{1}}$ is equal to the $\bullet$-product of 
 the first $\ell_{1}$ factors in the unique $\bullet$-decomposition of $\textsf{H}_{\pi}$, and this gives us that $\textsf{H}_{G_{2}}$ equals the 
 $\bullet$-product of the remaining factors in the same $\bullet$-decomposition of $\textsf{H}_{\pi}$, and this is according to our characterization of 
 $ \bullet$-decompositions, as in Theorem \ref{theoremcategory}. 
 So, from \eqref{chichiGG}, we may obtain that 
\begin{align*}
 \left( \chi \otimes \chi \right) \left( \Delta \textsf{H}_{\pi} \right) 
 & = \sum_{\ell_{1} + \ell_{2} = \ell} H_{\ell_{1}} \otimes H_{\ell_{2}} \\ 
 & = \Delta\left( \chi\left( \textsf{H}_{\pi} \right) \right). 
\end{align*}
 To obtain that 
 $ \varepsilon_{\textsf{NSym}} \circ \chi = \varepsilon_{\textsf{ParSym}}$, 
 we begin by letting $\pi$ again be reducible, with $\ell$ as before, so that we may write 
\begin{align*}
 \varepsilon_{\textsf{NSym}}\left( \chi\left( \textsf{H}_{\pi} \right) \right) 
 & = \varepsilon_{\textsf{NSym}}\left( H_{\ell} \right) \\ 
 & = \begin{cases} 
 1 & \text{if $\ell = 0$,} \\
 0 & \text{otherwise,} 
 \end{cases} \\
 & = \begin{cases} 
 1 & \text{if $\pi = \varnothing$,} \\
 0 & \text{otherwise.} 
 \end{cases} 
\end{align*}
 By again letting $\pi$ and $\ell$ be as before, we may find that 
\begin{align*}
 \zeta_{\textsf{NSym}} \left( \chi\left( \textsf{H}_{\pi} \right) \right)
 & = \zeta_{\textsf{NSym}} \left( H_{\ell} \right) \\ 
 & = \begin{cases} 
 1 & \text{if $\ell \in \{ 0, 1 \}$}, \\ 
 0 & \text{otherwise}, 
 \end{cases} \\ 
 & = \begin{cases} 
 1 & \text{if $\pi \in \left\{ \varnothing, \begin{tikzpicture}[scale = 0.5,thick, baseline={(0,-1ex/2)}] 
\tikzstyle{vertex} = [shape = circle, minimum size = 7pt, inner sep = 1pt] 
\node[vertex] (G--1) at (0.0, -1) [shape = circle, draw] {}; 
\node[vertex] (G-1) at (0.0, 1) [shape = circle, draw] {}; 
\end{tikzpicture} \right\}$}, \\ 
 0 & \text{otherwise}, 
 \end{cases} 
\end{align*}
 so that $\zeta_{\textsf{PArSym}} = \zeta_{\textsf{NSym}} \circ \chi$, 
 as desired. 
\end{proof}

\subsection{Relationship with $\textsf{QSym}$}
 A fundamental result in the study of CHAs due to Aguiar, Bergeron, and Sottile \cite{AguiarBergeronSottile2006}, described as \emph{beautiful} in 
 \cite{BenedettiHallamMachacek2016} due to the way it relates a quasisymmetric function to each element in a given CHA, may be formulated as in Theorem 
 \ref{fundamentalCHA} below, where $\textsf{QSym}$ denotes the Hopf algebra dual to $\textsf{NSym}$, and where the monomial basis $\{ M_{\alpha} 
 : \alpha \in \mathcal{C} \}$ is dual to \eqref{completefamily}, according to the pairing $\langle \cdot, \cdot \rangle\colon \textsf{NSym} \times 
 \textsf{QSym} \to \mathbbm{k}$ such that $\langle H_{\alpha}, M_{\beta} \rangle = \delta_{\alpha, \beta}$. The character $ \zeta_{\textsf{QSym}}$ is 
 such that $ \zeta_{\textsf{QSym}}(M_{\alpha}) $ agrees with \eqref{zetaNSym}. 

\begin{theorem}\label{fundamentalCHA} \cite[Theorem 4.1]{AguiarBergeronSottile2006} 
 For a CHA 	 $(\mathcal{H}, \zeta)$, there is a unique morphism of CHAs from $ (\mathcal{H}, \zeta)$ 
 to $ (\textsf{\emph{QSym}}, \zeta_{\textsf{\emph{QSym}}})$. 
\end{theorem}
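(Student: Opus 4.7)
The plan is to construct the claimed morphism explicitly and then show that any CHA morphism must coincide with it. For $h \in \mathcal{H}_n$ homogeneous, define
$$
\Psi(h) := \sum_{\alpha \vDash n} \zeta_{\alpha}(h) \, M_{\alpha},
$$
where for $\alpha = (\alpha_1, \alpha_2, \ldots, \alpha_k)$, the linear functional $\zeta_{\alpha}\colon \mathcal{H}_n \to \mathbbm{k}$ is obtained by iterating the coproduct to form $\Delta^{(k-1)}\colon \mathcal{H} \to \mathcal{H}^{\otimes k}$, projecting each tensor factor onto the graded component $\mathcal{H}_{\alpha_i}$, and then applying $\zeta^{\otimes k}$. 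Coassociativity makes $\zeta_{\alpha}$ well defined independently of the parenthesization of $\Delta^{(k-1)}$, and the grading of $\mathcal{H}$ ensures the sum over $\alpha \vDash n$ is finite.

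Next I would verify the three structural properties. That $\Psi$ is an algebra morphism follows from the multiplicativity of $\zeta$ together with the bialgebra axiom relating product and coproduct, unpacking $\zeta_{\alpha}(xy)$ using the Hopf-algebraic formula $\Delta^{(k-1)}(xy) = \Delta^{(k-1)}(x) \cdot \Delta^{(k-1)}(y)$ and matching, on the $\textsf{QSym}$ side, against the known product $M_{\alpha} \cdot M_{\beta} = \sum_{\gamma} M_{\gamma}$ over quasi-shuffles. That $\Psi$ is a coalgebra morphism would follow by writing the coproduct $\Delta_{\textsf{QSym}}(M_{\alpha}) = \sum_{\alpha = \alpha' \cdot \alpha''} M_{\alpha'} \otimes M_{\alpha''}$ as a sum over splittings of $\alpha$ into two concatenated subcompositions, and then using coassociativity of $\Delta_{\mathcal{H}}$ to match each such splitting with the decomposition of $\Delta^{(k-1)}$ appearing in $\zeta_{\alpha}$. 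Finally, character compatibility $\zeta_{\textsf{QSym}} \circ \Psi = \zeta$ is immediate, since the canonical character of $\textsf{QSym}$ extracts precisely the coefficient $\zeta_{(n)}(h) = \zeta(h)$ for $h \in \mathcal{H}_n$, with all other $M_{\alpha}$ contributions annihilated.

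For uniqueness, suppose $\Psi'$ is any CHA morphism from $(\mathcal{H}, \zeta)$ to $(\textsf{QSym}, \zeta_{\textsf{QSym}})$. For homogeneous $h$ and a composition $\alpha = (\alpha_1, \ldots, \alpha_k)$, the coefficient of $M_{\alpha}$ in $\Psi'(h)$ can be recovered by applying $\Delta^{(k-1)}$, projecting onto $\mathcal{H}_{\alpha_1}\otimes\cdots\otimes\mathcal{H}_{\alpha_k}$, and then evaluating $\zeta_{\textsf{QSym}}^{\otimes k}$. Because $\Psi'$ is a coalgebra morphism intertwining $\Delta$ and $\Delta_{\textsf{QSym}}$, and because $\zeta_{\textsf{QSym}} \circ \Psi' = \zeta$, this extraction equals $\zeta_{\alpha}(h)$, forcing $\Psi' = \Psi$.

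The main obstacle is the coalgebra-morphism verification: one must match the iterated-coproduct data on $\mathcal{H}$ with the combinatorics of splittings of compositions governing $\Delta_{\textsf{QSym}}$, and track carefully that the projections $\pi_{\alpha_i}$ and the concatenation indexing of $M_{\alpha}$ align. Everything else reduces to formal bialgebra manipulations once the explicit formula for $\Psi$ is in hand.
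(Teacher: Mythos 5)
The paper does not prove this statement itself --- it is quoted verbatim as \cite[Theorem 4.1]{AguiarBergeronSottile2006} and used as a black box. Your argument is, in substance, the original Aguiar--Bergeron--Sottile proof: the formula $\Psi(h) = \sum_{\alpha \vDash n} \zeta_{\alpha}(h)\, M_{\alpha}$ with $\zeta_{\alpha}$ built from iterated coproducts, graded projections, and $\zeta^{\otimes k}$ is exactly their construction, the deconcatenation coproduct on the $M_{\alpha}$ handles the coalgebra-morphism check, and the uniqueness argument via extracting the coefficient of $M_{\alpha}$ by the dual functionals $\zeta_{\textsf{QSym},\alpha}$ (using that these are dual to the $M_{\alpha}$ and that $\Psi'$ intertwines coproducts and characters) is the standard one. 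The proposal is correct; it matches the cited source rather than anything proved in this paper.
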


 Since we have proved that $\chi\colon\textsf{ParSym} \to \textsf{NSym}$ 
 is a combinatorial Hopf morphism, 
 and since the proof of Theorem 4.1 from 
 \cite{AguiarBergeronSottile2006} provides a way of constructing a combinatorial Hopf morphism from 
 $\textsf{NSym}$ to $\textsf{QSym}$, 
 by taking the composition of the former CHA morphism evaluated at this latter CHA morphism, 
 we obtain the unique CHA morphism from $\textsf{ParSym}$ to $\textsf{QSym}$. 

\section{Diagram subalgebras and Hopf subalgebras}\label{sectionDiagramHopf}
 A \emph{diagram algebra} may be broadly understood to refer to a subalgebra of $\mathbb{C}A_k(n)$. By taking subalgebras of the underlying algebra of 
 the bialgebra $\textsf{ParSym}$ by restricting the diagrams allowed for index sets of the form indicated in \eqref{gradedcomponent}, e.g., according to 
 a given family of diagram algebras, this gives rise to CHA structures worthy of study in relation to $\textsf{ParSym}$. The work of Colmenarejo 
 et al.\ in \cite{ColmenarejoOrellanaSaliolaSchillingZabrocki2020} was devoted to the lifting of combinatorial properties associated with the symmetric 
 group algebra $\mathbb{C}S_k$ as a subalgebra of $\mathbb{C}A_k(n)$ to diagram subalgebras of $\mathbb{C}A_k(n)$ apart from $\mathbb{C}S_k$, 
 and this leads us to consider how the partition diagrams indexing the bases of these subalgebras could lead to \emph{Hopf} subalgebras of $ 
 \textsf{ParSym}$, as opposed to \emph{diagram}
 subalgebras of $\mathbb{C}A_k(n)$. For each of the diagram subalgebras considered in 
 \cite{ColmenarejoOrellanaSaliolaSchillingZabrocki2020}, with reference to Table 1 in \cite{ColmenarejoOrellanaSaliolaSchillingZabrocki2020}, we 
 consider, as below, a corresponding subalgebra of $\textsf{ParSym}$. A remarkable property about our new Hopf algebra $\textsf{ParSym}$ is given 
 by how each of the families of partition diagrams associated with what may be considered as the main or most important diagram subalgebras 
 \cite{ColmenarejoOrellanaSaliolaSchillingZabrocki2020} naturally gives rise to a Hopf subalgebra of $\textsf{ParSym}$. This nicely illustrates how the 
 morphisms $\Delta$ and $S$ we have defined on $\textsf{ParSym}$ are natural, in terms of lifting combinatorial objects and properties associated with 
 both $\mathbb{C}A_k(n)$ and $\textsf{NSym}$. 

 The \emph{propagation number} of a partition diagram $\pi$ refers to the number of components in $\pi$ that contain at least one upper vertex and at least 
 one lower vertex. So, there is a natural correspondence between the partition diagrams in $A_{i}$ of propagation number $i$ and the permutations in 
 the symmetric group $S_{i}$. So, if we take $S_{i}$ as a submonoid of $A_{i}$, and then define $\mathbbm{k}$-spaces obtained by replacing the index 
 set $A_{i}$ in \eqref{gradedcomponent} with $S_{i}$, and then form a graded subalgebra of \eqref{ParSymdefinition}, subject to the same 
 multiplication rule in \eqref{productParSym}, this leads us to consider the effect of our coproduct operation for $\textsf{ParSym}$, restricted to the 
 graded algebra on permutations we have formed. For a permutation $p$ written as a permuting diagram $\{ \{ 1, p(1)' \}, \{ 2, p(2)' \}, \ldots, \{ k, p(k)' 
 \} \}$, by writing this diagram as a concatenation of $\otimes$-irreducible diagrams, the resultant factors are given by irreducible, permuting diagrams, 
 and we find that 
 these factors are primitive, since no two bottom nodes of a permuting diagram can be in the same component, so we find that we obtain 
 closure under $\Delta$, and an equivalent argument applies to obtain closure with repsect to the antipode $S$. The subalgebra of $ 
 \textsf{ParSym}$ spanned by permutations is not isomorphic to the Malvenuto--Reutenauer Hopf algebra of permutations, which involves the shuffle 
 product of permutations, as opposed to the concatenation of permutation diagrams, for the definition of its product operation. The number of 
 generators in each degree for the subalgebra spanned by permutations we have defined is given by the Boolean transform of the sequence of factorials, 
 indexed in the OEIS as A003319, and this sequence has often arisen in the context of Hopf algebras and Hopf monoids. This motivates the exploration 
 of diagram superalgebras of $\mathbb{C}S_k$ in $\mathbb{C}A_k(n)$, in relation to $\textsf{ParSym}$. 

 \emph{Planar} diagrams refer to partition diagrams that may be expressed as planar graphs, and the dimension of the planar subalgebra of 
 $\mathbb{C}A_k(n)$ is $\frac{1}{2k+1} \binom{4k}{2k}$. It seems that the Boolean transform for the corresponding integer sequence, giving 
 $\frac{1}{4n-1} \binom{4n}{2n}$, has not been considered in the context of Hopf algebras. This motivates the following result. For the sake of 
 convenience, we may write, as below, the expression $\pi$ in place of $\textsf{H}_{\pi}$. 

\begin{theorem}\label{theoremplanar}
 The graded subalgebra of $\textsf{\emph{ParSym}}$ spanned by planar diagrams forms a Hopf subalgebra. 
\end{theorem}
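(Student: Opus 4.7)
Let $\textsf{ParSym}^{\text{pl}}$ denote the graded $\mathbbm{k}$-subspace of $\textsf{ParSym}$ spanned by $\textsf{H}_\pi$ with $\pi$ planar. The plan is to verify closure under the product $\otimes$, the coproduct $\Delta$, and the antipode $S$; closure under the remaining Hopf algebra structure maps is automatic since the unit $\textsf{H}_\varnothing$ is planar and the counit and character take values in $\mathbbm{k}$.

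Closure under the product is immediate: placing planar drawings of $\pi$ and $\rho$ side by side yields a planar drawing of $\pi \otimes \rho$. The real content lies in closure under $\Delta$ and $S$, both defined via $\bullet$-decomposition, and the key step is the lemma: \emph{if $\pi$ is planar and $\pi = G_1 \bullet G_2$, then $G_1$ and $G_2$ are both planar.} I would prove this using the standard identification of planar partition diagrams with non-crossing set partitions on the boundary cycle of the rectangle, namely, the cyclic sequence $1, 2, \ldots, k, k', (k-1)', \ldots, 1'$ of the $2k$ nodes read clockwise. Writing $k_1$ for the order of $G_1$, the nodes of $G_1$ and the nodes of $G_2$ form two complementary arcs of this cycle, separated by the adjacency pair $(k_1', (k_1+1)')$ at the bottom, and this is precisely where the $\bullet$-edge joins $G_1$ and $G_2$. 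The set partition of $G_i$ is recovered by restricting $\pi$ to the $G_i$-arc. Since the restriction of a non-crossing set partition on a cycle to a sub-arc is again non-crossing, both $G_1$ and $G_2$ are planar.

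Given the lemma, closure under $\Delta$ follows: for irreducible planar $\pi$, every term $\textsf{H}_{G_1} \otimes \textsf{H}_{G_2}$ in $\Delta \textsf{H}_\pi$ has $G_1, G_2$ planar; for a reducible planar $\pi = \pi^{(1)} \otimes \cdots \otimes \pi^{(n)}$, each $\pi^{(i)}$ is planar as an induced subdiagram in a sub-rectangle of any planar drawing of $\pi$, and multiplicativity of $\Delta$ combined with closure under the product finishes this case. Closure under $S$ follows by iterating the lemma: the expansion \eqref{antipodeirreducible} writes $S(\textsf{H}_\pi)$ for irreducible planar $\pi$ as a signed sum over products indexed by $\bullet$-chains equal to $\pi$, and induction on the chain length shows each factor is planar; closure under the product then gives each product of factors planar as well.

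The main obstacle is to justify the correspondence between planarity in the sense of Remark \ref{realizedthiswas} and non-crossingness of the underlying set partition on the boundary cycle, especially given that partition diagrams are equivalence classes of graphs. While this correspondence is standard in the diagram algebra literature, the argument requires care: one must show that if a set partition admits some planar graph representative inside the rectangle, then a canonical representative such as the one joining consecutive members of each block along the boundary cycle is also planar, so that the $\bullet$-split at the edge $\{k_1', (k_1+1)'\}$ really does produce representatives of $G_1$ and $G_2$ fitting inside their own sub-rectangles without crossings.
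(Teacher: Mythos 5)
Your proposal is correct and follows the same overall skeleton as the paper's proof --- closure under $\otimes$ is immediate, and everything else reduces to the key lemma that the $\bullet$-factors of a planar diagram are again planar --- but your proof of that lemma takes a genuinely different route. The paper argues directly on graph representatives: it adds, if necessary, the border edge joining the lower-right node of the first factor to the lower-left node of the second, then invokes the edge addition/removal procedure from the proof of Theorem \ref{graphtheorem} and asserts that none of these modifications introduces a crossing, whence both factors inherit planarity from $\pi$. You instead pass to the identification of planar diagrams with non-crossing set partitions of the boundary cycle $1, \ldots, k, k', \ldots, 1'$, observe that the node sets of $G_1$ and $G_2$ form complementary contiguous arcs with the set partition of each $G_i$ obtained by restricting that of $\pi$ to its arc, and use that restricting a non-crossing partition to an arc preserves non-crossingness. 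Your route is arguably more robust: it sidesteps the paper's unproved assertion that the edge removals never create crossings, and it also disposes cleanly of the reducible case in the $\Delta$-closure argument (each $\otimes$-factor is likewise a restriction to a contiguous arc), a case the paper does not separate out. What your argument costs is the equivalence between planarity in the sense of Remark \ref{realizedthiswas} and non-crossingness on the boundary cycle, which you correctly flag as the point requiring justification; that equivalence is standard in the diagram-algebra literature and is implicitly what underlies the Catalan-type dimension count $\frac{1}{2k+1}\binom{4k}{2k}$ quoted in the paper, so the remaining gap is one of citation rather than substance.
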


\begin{proof}
 Let $\pi$ and $\rho$ be planar diagrams. Since $\pi$ and $\rho$ do not have edge crossings, placing $\rho$ to the right of $\pi$ would not result in an edge 
 crossing, so $\pi \otimes \rho$ is planar. Letting $\pi$ be as before, suppose that $\pi$ may be expressed so that $\pi = \rho^{(1)} \bullet \rho^{(2)}$ 
 for nonempty diagrams $\rho^{(1)}$ and $\rho^{(2)}$. We may add, if necessary, an edge in $\pi$ incident with 
 the lower right 
 vertex of $\rho^{(1)}$ and the lower left vertex of $\rho^{(2)}$. Since this added edge is on the border of $\pi$, this would not result in any crossing 
 edges. Mimicking our proof of Theorem \ref{graphtheorem}, we may remove edges from $\pi$ in such a way so that $\pi$ may be formed by adding an 
 isthmus incident with the lower right node of $\rho^{(1)}$ and the lower left node of $\rho^{(2)}$, and the removal of any edges would not result in 
 any crossing edges. Since $\pi$ is equivalent to the diagram given by taking $\rho^{(1)}$, placing $\rho^{(2)}$ to the right of $\rho^{(1)}$, and adding 
 an edge at the border, since this process would not create any edge crossings, we may conclude that there are no edge crossings in $\rho^{(1)}$ and no 
 edge crossings in $\rho^{(2)}$, by planarity of $\pi$. So, by expanding $\Delta \pi$ according to Definition \ref{DeltaParSym}, we would find that 
 each resultant term $ \textsf{H}_{G_{1}} \otimes \textsf{H}_{G_{2}}$ would be such that $G_{1}$ and $G_{2}$ are necessarily planar as diagrams. 
 So, we obtain closure with respect to $\Delta$, and an equivalent argument gives us closure with respect to $S$. The specified closure properties yield the 
 desired result. 
\end{proof}

 Following the ordering as to how diagram subalgebras are introduced in \cite{ColmenarejoOrellanaSaliolaSchillingZabrocki2020}, we proceed to define a 
 \emph{matching} as a partition diagram $\pi$ such that all blocks in $\pi$ are of size at most two. The dimension of the corresponding subalgebra of $ 
 \mathbb{C}A_k(n)$, i.e., the Rook-Brauer algebra of order $k$, is $\sum_{i=0}^{k} \binom{2k}{2i} (2i-1)!!$. It seems that the Boolean transform 
 of the corresponding integer sequence has not previously been considered, which motivates the following result. 

\begin{theorem}\label{theoremperfect}
 The graded subalgebra of $\textsf{\emph{ParSym}}$ spanned by matchings forms a Hopf subalgebra. 
\end{theorem}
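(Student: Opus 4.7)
The plan is to mimic the closure argument used in Theorem~\ref{theoremplanar}, checking that the span of matchings is preserved under the product $\otimes$, the coproduct $\Delta$, and the antipode $S$. The payoff will come from a block-level description of the $\bullet$ operation that makes the size restriction ``every block has at most two elements'' visibly inherited.

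First I would handle closure under the product. Since horizontal concatenation $\pi \otimes \rho$ simply places $\rho$ to the right of $\pi$ without identifying any nodes, the set of blocks of $\pi \otimes \rho$ is the disjoint union of the block sets of $\pi$ and $\rho$ (after relabeling). Hence if the blocks of both $\pi$ and $\rho$ have size at most two, so do those of $\pi \otimes \rho$.

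Next I would treat the coproduct. By Definition~\ref{DeltaParSym} and the way $\Delta$ was extended multiplicatively, it is enough to show that whenever $\pi$ is an $\otimes$-irreducible matching and $\pi = G_1 \bullet G_2$ for nonempty $G_1, G_2$, both $G_1$ and $G_2$ are matchings. The key observation is that, by Definition~\ref{bulletdefinition}, the only change in set-partition structure caused by $\bullet$ is that the block of $G_1$ containing the bottom-right vertex of $G_1$ and the block of $G_2$ containing the bottom-left vertex of $G_2$ are merged into one block of $\pi$ (via the added edge). Therefore every block of $G_1$ is either already a block of $\pi$ or is a subset of such a merged block of $\pi$, and similarly for $G_2$. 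In either case its size is bounded by the size of the corresponding block of $\pi$, which is at most two. So $G_1$ and $G_2$ are matchings. The conclusion for a general matching $\pi$ follows by writing $\pi$ as an $\otimes$-product of $\otimes$-irreducible matchings (Lemma~\ref{lemmauniqueotimes}), applying the irreducible case to each factor, and using the first step to see that $\otimes$-products of matchings are matchings.

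Finally, closure under $S$ is essentially the same argument iterated: from \eqref{antipodeirreducible}, for irreducible $\pi$ the element $S(\textsf{H}_\pi)$ is a signed sum indexed by all tuples $(\rho^{(1)}, \ldots, \rho^{(\ell)})$ of $\otimes$-irreducible diagrams with $\rho^{(1)} \bullet \cdots \bullet \rho^{(\ell)} = \pi$. By induction on $\ell$, applying the coproduct step to the innermost $\bullet$-factorization each time, every $\rho^{(i)}$ is a matching whenever $\pi$ is, and then the first step ensures the product $\textsf{H}_{\rho^{(1)}} \cdots \textsf{H}_{\rho^{(\ell)}}$ lies in the span of matchings. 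Extending antimultiplicatively and linearly gives closure of matchings under $S$, as required.

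The main obstacle is the block-merging analysis at the center of the coproduct step: one must verify carefully that a $\bullet$-decomposition $\pi = G_1 \bullet G_2$ introduces exactly one new edge and hence merges exactly one pair of blocks, so that the size-$\le 2$ condition transfers back to each factor. The rest of the argument is just bookkeeping, parallel to Theorem~\ref{theoremplanar}.
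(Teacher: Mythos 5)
Your proposal is correct and follows essentially the same route as the paper: closure under $\otimes$ because concatenation leaves block sizes unchanged, and closure under $\Delta$ and $S$ because a $\bullet$-factorization $\pi = G_1 \bullet G_2$ merges exactly one pair of blocks, so every block of a factor is contained in a block of $\pi$ and the size-at-most-two condition is inherited. The only cosmetic difference is that you argue directly at the level of set partitions (blocks of factors refine blocks of the product), whereas the paper routes the same observation through the isthmus construction of Theorem~\ref{graphtheorem}; both yield the same conclusion.
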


\begin{proof}
 For matchings $\pi$ and $\rho$, by placing $\rho$ to the right of $\pi$, this does not alter the cardinalities of the blocks in $\pi$ or the blocks in $\rho$, 
 so the concatenation $\pi \otimes \rho$ is such that each block is of size one or two. Letting $\pi$ be as before, suppose that $\pi = \rho^{(1)} \bullet 
 \rho^{(2)}$. By definition of the operation denoted as $\bullet$, the lower right vertex of $\rho^{(1)}$ is in the same component as the lower left 
 vertex of $\rho^{(2)}$, in the diagram $\pi$. Since $\pi$ is a matching, we may deduce that the aforementioned component consists of two vertices. 
 So, with regard to our proof of Theorem \ref{graphtheorem}, we would not have to remove any edges to form an isthmus. So, the diagram $\pi$ may be 
 obtained by taking $\rho^{(1)}$, placing $\rho^{(2)}$ beside $\rho^{(1)}$, and then forming an isthmus at the border that forms a connected component of 
 size two. So, this process would not result in any connected components of size greater than two, so $\rho^{(1)}$ and $\rho^{(2)}$ are matchings. 
 Mimicking a line of reasoning from our proof of Theorem \ref{theoremplanar}, we obtain closure with respect to $\Delta$ and $S$. 
\end{proof}

 A \emph{perfect matching} is a matching such that each block is of size two. 
 The diagram subalgebra of $\mathbb{C}A_k(n)$ spanned/indexed by perfect matchings 
 is the famous Brauer algebra, which is of dimension $(2k-1)!!$. 
 The Boolean transform, in this case, agrees with the OEIS sequence A000698, 
 which is associated with many enumerative interpretations. 

\begin{theorem}
 The graded subalgebra of $\textsf{\emph{ParSym}}$ spanned by perfect matchings forms a Hopf subalgebra. 
\end{theorem}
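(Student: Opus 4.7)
The plan is to follow the template of Theorems \ref{theoremplanar} and \ref{theoremperfect}, verifying closure of the subspace spanned by perfect matchings under $\otimes$, $\Delta$, and $S$. Closure under the product is immediate: if $\pi$ and $\rho$ are perfect matchings, then the horizontal concatenation $\pi \otimes \rho$ leaves the blocks of each diagram intact, so every block of $\pi \otimes \rho$ still has size exactly two.

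The main obstacle, which makes the perfect-matching case more rigid than the matching case, is a parity argument that I would use to show that every perfect matching is $\bullet$-irreducible. Suppose for contradiction that $\pi$ is a perfect matching of order $k$ admitting a nontrivial decomposition $\pi = G_1 \bullet G_2$ with $G_1$ and $G_2$ nonempty of orders $k_1$ and $k - k_1$. By the definition of $\bullet$, the block of $\pi$ containing the vertex $k_1'$ is exactly the union of the block of $k_1'$ in $G_1$ and the block of $1'$ in $G_2$ (in its local numbering). Since $\pi$ is a perfect matching, this merged block has size two, forcing the constituent blocks to be singletons $\{k_1'\}$ in $G_1$ and $\{1'\}$ in $G_2$. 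The remaining $2k_1 - 1$ vertices of $G_1$ lie in blocks of $\pi$ that do not cross into $G_2$, and hence must be partitioned into blocks of size two; but $2k_1 - 1$ is odd, a contradiction.

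Given $\bullet$-irreducibility of every perfect matching, the coproduct of an $\otimes$-irreducible perfect matching $\pi$ collapses, via Definition \ref{DeltaParSym}, to the primitive-like expression $\Delta \textsf{H}_{\pi} = \textsf{H}_{\pi} \otimes \textsf{H}_{\varnothing} + \textsf{H}_{\varnothing} \otimes \textsf{H}_{\pi}$, which remains in the perfect matching subalgebra. Similarly, the antipode formula \eqref{antipodeirreducible} reduces to $S(\textsf{H}_{\pi}) = -\textsf{H}_{\pi}$, again a term indexed by a perfect matching. To handle $\otimes$-reducible perfect matchings, I would note that any $\otimes$-separation of a perfect matching splits it into $\otimes$-irreducible perfect matchings, since horizontal separations cannot merge blocks, and then invoke the algebra morphism property of $\Delta$ together with the antialgebra morphism property of $S$ to propagate closure to the whole subspace, yielding the desired Hopf subalgebra structure.
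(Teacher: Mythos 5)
Your proposal is correct and follows essentially the same route as the paper: both arguments establish $\otimes$-closure by noting that horizontal concatenation preserves block sizes, and both show that every perfect matching is $\bullet$-irreducible (hence primitive) via the same parity contradiction, namely that the forced size-two block containing the joining vertices leaves an odd number of vertices in one factor to be covered by size-two blocks. Your version is in fact slightly cleaner, since you bypass the paper's auxiliary discussion of whether the adjacent upper vertices could form a component.
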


\begin{proof}
 We may mimic $\otimes$-closure argument in our proof of Theorem \ref{theoremperfect} to demonstrate the desired $\otimes$-closure property for perfect 
 matchings. Now, for a \emph{perfect} matching $\pi$, we assume, by way of contradiction, that there exist nonempty diagrams such that $\pi = 
 \rho^{(1)} \bullet \rho^{(2)}$. Again, by definition of the operation $\bullet$, the lower right vertex of $\rho^{(1)}$ is in the same component as the 
 lower left vertex of $\rho^{(2)}$, in $\pi$. So, since each connected component in $\pi$ is of size two, we may deduce that there is a component in $\pi$ 
 consisting entirely of the lower right vertex of $\rho^{(1)}$ and the lower left vertex of $\rho^{(2)}$. Furthermore, we may deduce that the upper right 
 vertex of $\rho^{(1)}$ cannot be in the same component as the upper left vertex of $\rho^{(2)}$, in $\pi$, because, otherwise, since each component 
 of $\pi$ is of size two, these two upper vertices would form an edge, say $\{ i, i+1 \}$, but then both $\{ i, i+1 \}$ and $\{ i', (i+1)' \}$ would be 
 components in $\pi$, which is impossible, since $\bullet$ would only have the effect of adding a bottom isthmus forming a component of size two, and this 
 could not have any effect in terms of adding an upper edge. So, by assumption that $\rho^{(1)}$ and $\rho^{(2)}$ are nonempty, each of $\rho^{(1)}$ 
 and $\rho^{(2)}$ has at least two nodes, but since a given partition diagram has an even number of nodes, there would be an odd number of nodes 
 ``available'' in $\rho^{(1)}$ apart from the lower right node that would be adjacent, in $\pi$, with the lower left node of $\rho^{(2)}$, but then we 
 could not form a perfect matching from these odd number of ``remaining'' nodes. So, for a perfect matching $\pi$, we may deduce that $\pi$ is primitive in 
 $\textsf{ParSym}$, so that we obtain closure with respect to $\Delta$. An equivalent argument applies to closure with respect to $S$. 
\end{proof}

 A \emph{patial permutation} is a partition diagram $\pi$ such that each block of $\pi$
 is of size one or two and such that each block that is of size two in $\pi$ is propagating, i.e., 
 so that each such block has at least one upper vertex and at least one lower vertex. 
 The diagram subalgebra of $\mathbb{C}A_k(n)$ spanned or indexed by partial permutations is of 
 dimension $\sum_{i=0}^{k} \binom{k}{i}^2 i!$, and it seems that the Boolean transform for 
 such expressions has not previously been considered. 

\begin{theorem}
 The graded subalgebra of $\textsf{\emph{ParSym}}$
 spanned by perfect matchings forms a Hopf subalgebra. 
\end{theorem}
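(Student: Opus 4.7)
The plan is to establish the three closure properties that define a Hopf subalgebra---closure under the horizontal concatenation $\otimes$, under the coproduct $\Delta$, and under the antipode $S$---for the $\mathbbm{k}$-span of perfect matchings inside $\textsf{ParSym}$. The overall strategy parallels the proof of the preceding theorem, since the assertion as stated concerns perfect matchings, but I want to emphasize the parity obstruction that makes each $\otimes$-irreducible perfect matching primitive for $\bullet$.

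Closure under $\otimes$ is straightforward: if $\pi$ and $\rho$ are perfect matchings and $\rho$ is placed to the right of $\pi$, no blocks are merged, so each block of $\pi\otimes\rho$ is a block of either $\pi$ or $\rho$ and hence has size exactly two. By compatibility of $\Delta$ with the product in Definition \ref{DeltaParSym}, closure of $\Delta$ on the span reduces to showing the desired property on generators $\textsf{H}_\pi$ for $\otimes$-irreducible perfect matchings $\pi$. Concretely, I plan to show that any such $\pi$ is $\bullet$-primitive, whence $\Delta\textsf{H}_\pi = \textsf{H}_\pi\otimes\textsf{H}_\varnothing + \textsf{H}_\varnothing\otimes\textsf{H}_\pi$, which plainly lies in the tensor square of the span of perfect matchings.

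For the primitivity argument, suppose for contradiction that $\pi = \rho^{(1)}\bullet\rho^{(2)}$ with both factors nonempty. By Definition \ref{bulletdefinition}, this places $\rho^{(2)}$ immediately to the right of $\rho^{(1)}$ and adds a single edge joining the bottom-right vertex $v$ of $\rho^{(1)}$ and the bottom-left vertex $w$ of $\rho^{(2)}$. Since every block of $\pi$ has cardinality two, the component of $\pi$ containing $\{v,w\}$ must be exactly $\{v,w\}$. The remaining vertices of $\rho^{(1)}$ then have to pair up within $\rho^{(1)}$ itself according to the blocks of $\pi$, but removing $v$ from $\rho^{(1)}$ leaves a diagram with an odd total number of vertices, which cannot support a perfect matching. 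This contradiction shows $\pi$ is $\bullet$-primitive.

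The main subtlety I anticipate is the possibility that the upper-right vertex of $\rho^{(1)}$ and the upper-left vertex of $\rho^{(2)}$ lie in a common block of $\pi$; I plan to dispatch this exactly as in the preceding theorem's argument, noting that $\bullet$ only introduces a bottom edge and never a top one, so if those two upper vertices were paired in $\pi$ they would already have to be paired inside $\rho^{(1)}$ or $\rho^{(2)}$, which is impossible since each belongs to a different factor. Closure under the antipode $S$ then follows from Definition \ref{antipodeirreducible}: for a $\bullet$-primitive generator $\pi$ the defining sum contains only the one-term tuple $(\pi)$, so $S(\textsf{H}_\pi) = -\textsf{H}_\pi$ remains in the span, and the antimorphism property together with $\otimes$-closure propagates this to all products of such generators, yielding the claimed Hopf subalgebra structure.
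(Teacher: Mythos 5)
Your proposal is correct and follows essentially the same route as the paper's own argument for the perfect-matching case: closure under $\otimes$ because horizontal concatenation merges no blocks, $\bullet$-primitivity of the $\otimes$-irreducible generators via the parity contradiction (the added bottom edge forms a two-element block on its own, leaving an odd number of vertices of $\rho^{(1)}$ to be paired), and the resulting closure under $\Delta$ and $S$. The only cosmetic difference is your slightly cleaner dispatch of the upper-vertex subtlety, which does not change the substance of the argument.
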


\begin{proof}
 This may be proved in effectively the same way as in with the case for permuting diagrams. 
\end{proof}

 Mimicking our above proof on perfect matchings, we may obtain 
 corresponding closure properties with respect to planar perfect matchings, 
 which are associated with Temperley--Lieb algebras, 
 and similarly with respect to the other major diagram algebras, as in with the Motzkin algebra and the planar rook algebra. 

\section{Conclusion}\label{sectionConclusion}
 We conclude by briefly describing some future areas of research related to $\textsf{ParSym}$. 

 If we were to construct a CHA generated by $\otimes$-irreducible partition diagrams, but with a \emph{commutative} operation given by the disjoint 
 union of graphs in place of the noncommutative $\otimes$ operation, then this would give rise to a free commutative algebra and an analogue of 
 partition diagrams, given by the equivalence classes given by identifying two partition diagrams if such diagrams may be obtained by permuting the 
 positions of $\otimes$-irreducible factors. We encourage the study of this free commutative algebra and its relation to Schur--Weyl duality. Given how 
 partition diagrams are derived using the centralizer algebra in \eqref{maincentralizer}, 
 what would be the appropriate analogy of this Schur--Weyl duality corresponding to the ``commutative'' version of partition diagrams 
 we have suggested? 

 By analogy with how the stack-sorting map was lifted from permutations to partition diagrams in \cite{Campbellunpublished}, how could the shuffle 
 product of permutations be lifted to partition diagrams, to form a new CHA on partition diagrams, but with a shuffle-type product being used in place 
 of concatenation? This would be of interest in terms of the problem of lifting the Malvenuto--Reutenauer Hopf algebra of permutations, with the 
 use of partition diagrams. 

 Instead of constructing a CHA via the horizontal concatenation of partition diagrams, as above, how 
 could a subalgebra of $\mathbb{C}A_{k}(n)$, or a related 
 structure such as the rook monoid algebra, that does not have a group algebra structure be endowed with a Hopf algebra structure, i.e., with a 
 vertically defined 
 diagram multiplication operation, and by analogy with the Hopf algebra structure 
 on the group algebra $\mathbb{C}S_k$? 

 What is the Hopf algebra $\textsf{ParSym}^{\ast}$ that is dual to $\textsf{ParSym}$, i.e., how can its product and coproduct operations be defined in an 
 explicit, combinatorial way, by analogy with how the CHA $\textsf{QSym}$ of quasisymmetric functions is dual to $\textsf{NSym}$? 

 Partition algebras are considered to have two distinguished bases: The diagram basis and the orbit basis. If we think of a given $\textsf{H}$-basis element 
 $\textsf{H}_{\pi}$ as being in correspondence with the diagram basis element $d_{\pi}$, then one may thus construct an analogue of the orbit basis for 
 $\textsf{ParSym}$. How could this orbit-like basis be applied in relation to the CHA structure on $\textsf{ParSym}$? 

 We have introduced liftings of the $H$- and $E$- bases of $\textsf{NSym}$, as in with the $\textsf{H}$- and $\textsf{E}$-bases of $\textsf{ParSym}$. 
 How could the other distinguished bases of $\textsf{NSym}$ be lifted to $\textsf{ParSym}$ in a way that is applicable to the study of the structure of 
 partition algebras? 

 The statistic $m = m(\pi)$ involved in Theorem \ref{graphtheorem} may be of interest in its own right, in view of its uses, as above, in the context of the 
 study of the structure of $\textsf{ParSym}$. How can $m(\pi)$ be computed in an efficient way, in view of the algorithmic nature of our proof of 
 Theorem \ref{graphtheorem}, and what interesting properties can be derived from 
 or associated with the number of diagrams $\pi$ of a fixed order $n$ 
 such that $m(\pi) = k$ for fixed $k$? 

 Recall that we have offered a reformulation as to how $\textsf{NSym}$ may be defined or constructed, as in Section \ref{subsectionNear}. This is of 
 interest due to how, in contrast to how $\textsf{NSym}$ may be defined as a free algebra with one generator in \emph{each} positive integer degree, our 
 reformulation of $\textsf{NSym}$ is given by how it is generated by a \emph{single} element, subject to the two operations $\circ$ and $\bullet$ indicated 
 in Section \ref{subsectionNear}. It seems that $\textsf{NSym}$ has not been previously been considered in this way, i.e., as being freely generated by 
 a \emph{single} object. How could this be explored in a category-theoretic way and with regard to how $\textsf{NSym}$ is universal in the category of 
 CHAs, and in regard to the connection between $\textsf{NSym}$ and Grothendieck rings for finitely generated 
 projective representations \cite{BergBergeronSaliolaSerranoZabrocki2014}? 

 It appears that: Endowed with the products $\otimes$ and $\bullet$, we have that $\textsf{ParSym}$
 has the structure of a matching associative algebra, according to the definition of this term given in 
 \cite{ZhangGaoGuo2020}, so that, for partition diagrams $x$, $y$, and $z$, we have that 
\begin{align*}
 (x \bullet y) \bullet z & = x \bullet (y \bullet z), \\ 
 (x \bullet y) \otimes z & = x \bullet (y \otimes z), \\ 
 (x \otimes y) \bullet z & = x \otimes (y \bullet z), \\ 
 (x \otimes y) \otimes z & = x \otimes (y \otimes z). 
\end{align*}
 It seems that $\textsf{ParSym}$ is freely generated by partition diagrams that are both $\otimes$- and $\bullet$-irreducible, 
 and we encourage the exploration of this. 

\subsection*{Acknowledgements}
 The author is grateful to acknowledge support from a Killam Postdoctoral Fellowship. 

\subsection*{Competing interests statement}
 The author has no competing interests to declare.

 \

 Dalhousie University 

 Department of Mathematics and Statistics

{\tt jmaxwellcampbell@gmail.com}

\end{document}